\newcommand{\aspas}[1]{``{#1}''}
\theoremstyle{definition}
\newtheorem{theorem}{Teorema}[section]
\newtheorem{example}[theorem]{Ejemplo}
\newtheorem{proposition}[theorem]{Proposición}
\newtheorem{definition}[theorem]{Definición}
\newtheorem{conjecture}[theorem]{Conjetura}
\newtheorem{remark}[theorem]{Observación}
\newtheorem{corollary}[theorem]{Corolario}
\newtheorem{problem}[theorem]{Problema}
\numberwithin{equation}{section}
\begin{document}


\vspace{0.5in}

\renewcommand{\bf}{\bfseries}
\renewcommand{\sc}{\scshape}
\vspace{0.5in}

\title[Categoría de una aplicación y análisis no lineal]%
{Categoría de una aplicación y análisis no lineal}

\author{Cesar A. Ipanaque Zapata}
\address{Departamento de Matem\'atica, IME Universidade de S\~ao Paulo\\
Rua do Mat\~ao 1010 CEP: 05508-090 S\~ao Paulo-SP, Brazil}
\email{cesarzapata@usp.br}


\subjclass[2010]{Primary 65H20, 55M30; Secondary 55P10, 58B05, 55Q05}                                    %

\keywords{Ecuaciones no lineales, Existencia de soluciones, Homotopia, Categoría de una aplicación, Categoría LS, Categoría seccional}

\begin{abstract} Un problema clásico en análisis es resolver ecuaciones no lineales de la forma \begin{equation*}
      F(x)=0,
  \end{equation*} donde $F:D^n\to \mathbb{R}^m$ es una aplicación continua del disco unitario cerrado $D^n\subset\mathbb{R}^n$ en $\mathbb{R}^m$. Una técnica topológica, que existe en la literatura, para la existencia soluciones de ecuaciones no lineales es la teoría del grado topológico. En este trabajo, usaremos la teoría de categoría de una aplicación para resolver el problema de existencia de soluciones de ecuaciones no lineales. Esta teoría, como mostraremos en este trabajo, da una técnica topológica alternativa para estudiar ecuaciones no lineales.
\end{abstract}

\maketitle
\vspace{5cm}
\begin{center}
\noindent{\textbf{Category of a map and nonlinear analysis}}
\end{center}
\begin{quote}
\textbf{Abstract:}  A classic problem in analysis is to solve nonlinear equations of the form \begin{equation*}
        F(x)=0,
    \end{equation*} where $F:D^n\to \mathbb{R}^m$ is a continuous map of the closed unit disk $D^n\subset\mathbb{R}^n$ in $\mathbb{R}^m$. A topological technique, which exists in the literature, for the existence of solutions of nonlinear equations is the topological degree theory. In this work, we will use the category of a map theory to solve the problem of existence of solutions of nonlinear equations. This theory, as we will show in this work, provides an alternative topological technique to study nonlinear equations.

\vspace{0.3cm}
\noindent\textbf{Keywords: Non linear equations, existence of solutions, homotopy, category of a map, LS category, sectional category.}
\end{quote}
\tableofcontents


\section{Introducción}
Un problema clásico en análisis es resolver ecuaciones no lineales de la forma \begin{equation*}
      F(x)=0,
  \end{equation*} donde $F:D^n\to \mathbb{R}^m$ es una aplicación continua del disco unitario cerrado $D^n\subset\mathbb{R}^n$ en $\mathbb{R}^m$. Note que, si existe $x\in \partial D^n=S^{n-1}$ tal que $F(x)=0$, tenemos inmediatamente que tal ecuación admite solución. Así que, en adelante vamos a suponer que $F(x)\neq 0$, para cualquier $x\in \partial D^n=S^{n-1}$ y consideraremos $F_\mid:S^{n-1}\to\mathbb{R}^m-\{0\}$ como la aplicación restricción. Una técnica topológica, que existe en la literatura, para la existencia de ecuaciones no lineales es la teoría del grado topológico. En este trabajo, usaremos la teoría de categoría de una aplicación para resolver el problema de existencia de soluciones de ecuaciones no lineales. Esta teoría, como mostraremos en este trabajo, da una técnica alternativa para estudiar ecuaciones no lineales.   

\medskip En el Capítulo~\ref{chap:preliminares} daremos una revisión de Homotopía. En la Sección~\ref{sec:homotopía} presentamos la definición y propiedades básicas de la relación de homotopía entre aplicaciones. La relación de homotopía es una relación de equivalencia (Proposición~\ref{proposition:relacion-homotopia}). Proposición~\ref{proposition:homotopia-compuesta} establece que la relación de homotopía se mantiene con respecto a la operación de composición. En la Sección~\ref{sec:nulhomotopy} recordamos la definición y propiedades básicas de nulo homotopía y contráctibilidad. La Proposición~\ref{proposition:o-composta} dice que la composición de dos aplicaciones es nulo homotópica siempre que una de las aplicaciones sea nulo homotópica. El caso particular cuando la identidad es nulo homotópica origina el concepto de contractibilidad (Definición~\ref{definition:contractil-space}). Proposición~\ref{proposition:o-contractil-nulo} muestra que  toda aplicación cuyo dominio o codominio sea contráctil es nulo homotópica. Además, la Proposición~\ref{proposition:codomain-contractil} dice que toda aplicación continua con codominio contráctil es homotópica a una aplicación constante deseada. En particular, cualesquier dos aplicaciones con el mismo codominio contráctil son homotópicas (ver Corollary~\ref{corollary:contractil-codo-maps-homotopic}). La Proposición~\ref{proposition:domain-contractil} afirma que toda aplicación continua con dominio contráctil y contradominio conexo por caminos es homotópica a una aplicación constante deseada. Como consecuencia, cualesquier dos aplicaciones con el mismo dominio contráctil y el mismo codominio conexo por caminos son homotópicas (Corolario~\ref{corollary:contractible-domain-path-conn-codomain-homotopy}). En la Sección~\ref{sec:retract} presentamos el concepto y propiedades básicas de retracto y retracto por deformación.  Proposición~\ref{proposition:retract-contractible-is-contractible} dice que todo retracto de un espacio contráctil es contráctil. En particular, tenemos que la esfera unitaria $S^{n-1}$ no es un retracto del disco unitario $D^n$ (Proposición~\ref{proposition:esfera-no-retracto}). Como consecuencia, se establece el famoso teorema del punto fijo de Brouwer (Proposición~\ref{proposition:brouwer-fixed-point-theorem}). Proposición~\ref{proposition:retract-extension} presenta una caracterización de retractos en términos de extensión de aplicaciones. Presentamos también el concepto de equivalencia homotópica (Definición~\ref{definition:homotopy-equivalence}). 

\medskip En el Capítulo~\ref{chap:categoria-aplicacion} daremos una revisión de categoría de una aplicación. En la Sección~\ref{sec:definiciones-cat} presentamos la definición y propiedades básicas de la categoría de una aplicación. Una propiedad es que la categoría de una aplicación es un invariante homotópico (Proposición~\ref{proposition:cat-aplicacao-invariante-homotopico}). La Proposición~\ref{proposition:cat-composta} muestra el comportamiento de la categoría con respecto a la composición. Además, Proposición~\ref{proposition:cat-composta-equivalencia-homotopica} dice que si componemos a una aplicación con una equivalencia homotópica su categoría no se altera. Se cumple que la categoría de una aplicación se puede realizar como la categoría de una fibración (ver Proposición~\ref{proposition:cat-map-cat-fibration}). En la Sección~\ref{sec:categoriaLS} recordamos la definición y propiedades básicas de categoría LS. La categoría LS da una cota superior para la categoría de una aplicación (Proposición~\ref{proposition:cat-catLS}). En particular, se cumple que la categoría de una aplicación con dominio o codominio una esfera es a lo máximo 2 (Corolario~\ref{corollary:cat-do-codo-esfera}). La Proposición~\ref{proposition:cat-no-nulo-2} da una caracteización de cuando una aplicación continua, con dominio o codominio una esfera, no es nulo homotópica. El comportamiento de la categoría LS para los retractos está dada en la Proposición~\ref{proposition:x-domina-y-catLS}. En particular se muestra que la categoría LS es un invariante homotópico (Corolario~\ref{corollary:cat-space-invariante}). Además, se tiene que la categoría LS de un espacio siempre le gana a la categoría LS de sus retractos (Corolario~\ref{corollary:catLS-retractos}). La Proposición~\ref{proposition:cat-map-inversa-homotopica} muestra la categoría de una equivalencia homotópica. En la Definición~\ref{definition:secat-def} recordamos la noción de categoría seccional. Una conexión entre categoría, categoría seccional y categoría LS está dada en el Teorema~\ref{theorem:cat-secat-maior-cat}. La Proposición~\ref{proposition:secat-1-cat-igual-catLS} muestra una igualdad entre la categoría de una aplicación, con categoría seccional uno, y la categoría LS de su codomoinio. En particular, se tiene que la categoría de la proyección sobre la primera coordenada definida sobre el espacio de configuraciones ordenado con dos puntos coincide con la categoría del espacio, siempre que tal espacio admita una aplicación continua sin punto fijo (Proposición~\ref{proposition:cat-proyection-section}). 

\medskip En el Capítulo~\ref{chap:aplicaciones-analise} usaremos la teoría de categoría de una aplicación para estudiar la existencia de soluciones de ecuaciones no lineales. En la Sección~\ref{sec:ecuaciones-no-lineales} se presenta el problema clásico de análisis no lineal, en particular de la existencia de soluciones, el cual pretendemos resolver usando la teoría de categoría de una aplicación. Una conexión entre la existencia de soluciones de una ecuación no lineal y la categoría de una aplicación está dada en el Teorema~\ref{theorem:extension-cero-cat-2}. En particular, el  Corolario~\ref{corollary:cat-2-solucion} presenta una condición en términos de categoría para la existencia de soluciones de una ecuación no lineal. Proposición~\ref{proposition:centro-radio-cualquer} dice que podemos considerar cualquier disco cerrado de cualquier radio y centrado en cualquier punto. Una condición en términos de categoría seccional para la existencia de soluciones está dada en la Proposición~\ref{proposition:secat-existencia-soluciones}. En la Sección~\ref{sec:ejemplos-concretos} presentamos una serie de ejemplos del uso de categoría en la existencia de soluciones de ecuaciones no lineales. 

\medskip Adicionalmente, en cada Capítulo se proponen problemas que pueden ser consideramos como proyectos de investigación a futuro. 

\section{Preliminares}\label{chap:preliminares}
En este capítulo daremos una revisión de Homotopía. En la Sección~\ref{sec:homotopía} presentamos la definición y propiedades básicas de la relación de homotopía entre aplicaciones. En la Sección~\ref{sec:nulhomotopy} recordamos la definición y propiedades básicas de nulo homotopía y contráctibilidad. En la Sección~\ref{sec:retract} presentamos el concepto y propiedades básicas de retracto y retracto por deformación. 

\medskip Tendremos en cuenta que una aplicación es una función $f:X\to Y$ tal que $\mathrm{Dom}(f)=X$. 

\subsection{Homotopía}\label{sec:homotopía}
En esta sección presentamos la definición y propiedades básicas de la relación de homotopía entre aplicaciones. La relación de homotopía es una relación de equivalencia (Proposición~\ref{proposition:relacion-homotopia}). Proposición~\ref{proposition:homotopia-compuesta} establece que la relación de homotopía se mantiene con respecto a la operación de composición. 

\medskip Note que, toda aplicación de la forma $H:X\times Z\to Y$ define una familia de aplicaciones $\{H_z:X\to Y\}_{z\in Z}$, donde cada $H_z:X\to Y$ está dada por $H_z(x)=H(x,z)$, para todo $x\in X$. Definiremos el concepto de homotopía el cuál será esencial en este trabajo. 

\begin{definition}[Homotopía]\label{definition:homotopia}
    Sean $f,g:X\to Y$ aplicaciones continuas.
    \begin{enumerate}
        \item Una \textit{homotopía}\index{Homotopía} de $f$ en $g$ es una aplicación continua $H:X\times [0,1]\to Y$ tal que $H_0=f$ y $H_1=g$.  
        \item Diremos que $f$ es \textit{homotópica}\index{Homotópica} a $g$, denotado por $f\simeq g$, cuando existe una homotopía de $f$ en $g$.  
    \end{enumerate}
\end{definition}

\begin{example}\label{first-example}
\noindent\begin{enumerate}
    \item[(1)] Sean $X$ un espacio topológico y $f,g:X\to\mathbb{R}^n$ aplicaciones continuas. Tenemos que $f\simeq g$. De hecho, consideremos la aplicación $H:X\times [0,1]\to \mathbb{R}^n$ dada por $H(x,t)=(1-t)f(x)+tg(x)$, para todo $(x,t)\in X\times [0,1]$. Note que, $H$ es continua (ya que $\mathbb{R}^n$ es un $\mathbb{R}$-espacio vectorial topológico). Además, $H(x,0)=f(x), \forall x\in X$, y $H(x,1)=g(x), \forall x\in X$.  
    \item[(2)] Sean $f,g:[0,1]\to S^1$ aplicaciones continuas dadas por $f(s)=\left(\cos{2\pi s},\sin{2\pi s}\right), \forall s\in [0,1]$, y $g(s)=(1,0), \forall s\in [0,1]$. Tenemos que $f\simeq g$. De hecho, consideremos la aplicación $H:[0,1]\times [0,1]\to S^1$ dada por $H(s,t)=f((1-t)s), \forall (s,t)\in [0,1]\times [0,1]$. Note que, $H$ está bien definida (ya que $(1-t)s\in [0,1]$), es continua (ya que es composición de aplicaciones continuas) y $H(s,0)=f(s), \forall s\in [0,1]$, $H(s,1)=f(0)=(1,0), \forall s\in [0,1]$.
\end{enumerate}    
\end{example}

Veamos que la relación \aspas{$\simeq$} es una relación de equivalencia en el conjunto $\mathrm{Map}(X,Y)$ de todas las aplicaciones continuas de $X$ en $Y$.

\begin{proposition}\label{proposition:relacion-homotopia}
    La relación \aspas{$\simeq$} es una relación de equivalencia en el conjunto $\mathrm{Map}(X,Y)$, o sea, cumple las siguientes propiedades: \begin{enumerate}
        \item $f\simeq f$ para todo $f\in \mathrm{Map}(X,Y)$ (Propiedad reflexiva).
        \item Para $f,g\in \mathrm{Map}(X,Y)$. Si $f\simeq g$ entonces $g\simeq f$ (Propiedad simétrica).
        \item Para $f,g,h\in \mathrm{Map}(X,Y)$. Si $f\simeq g$ y $g\simeq h$ entonces $f\simeq h$ (Propiedad transitiva).
    \end{enumerate}
\end{proposition}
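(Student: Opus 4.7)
El plan es verificar cada una de las tres propiedades construyendo explícitamente las homotopías requeridas y apelando a la continuidad de las construcciones mediante el lema del pegamiento (gluing lemma) cuando sea necesario.

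Para la propiedad reflexiva, dado $f\in \mathrm{Map}(X,Y)$, defino la homotopía constante $H:X\times [0,1]\to Y$ por $H(x,t)=f(x)$ para todo $(x,t)\in X\times [0,1]$. Esta aplicación es continua por ser la composición de la proyección $X\times [0,1]\to X$ (continua) con $f$ (continua), y claramente $H_0=H_1=f$, lo que muestra $f\simeq f$.

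Para la propiedad simétrica, supongo que existe una homotopía $H:X\times [0,1]\to Y$ de $f$ en $g$, y defino $\overline{H}:X\times [0,1]\to Y$ por $\overline{H}(x,t)=H(x,1-t)$. Entonces $\overline{H}$ es continua por ser composición de $H$ con la aplicación continua $(x,t)\mapsto (x,1-t)$, y cumple $\overline{H}_0=H_1=g$ y $\overline{H}_1=H_0=f$, lo que da $g\simeq f$.

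Para la propiedad transitiva, supongo que existen homotopías $H$ de $f$ en $g$ y $K$ de $g$ en $h$, y construyo $L:X\times [0,1]\to Y$ concatenando a velocidad doble:
\begin{equation*}
L(x,t)=\begin{cases} H(x,2t), & 0\le t\le 1/2,\\ K(x,2t-1), & 1/2\le t\le 1.\end{cases}
\end{equation*}
Verificaré que $L$ está bien definida observando que en $t=1/2$ ambas expresiones coinciden con $g(x)$. El paso clave y principal obstáculo técnico es justificar la continuidad de $L$: los conjuntos $X\times [0,1/2]$ y $X\times [1/2,1]$ son cerrados en $X\times [0,1]$ y su unión es todo el dominio, por lo que el lema del pegamiento para cerrados garantiza que $L$ es continua si sus restricciones a cada cerrado lo son (lo cual se sigue de que $H$ y $K$ son continuas compuestas con reparametrizaciones afines del intervalo). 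Finalmente, $L_0=H_0=f$ y $L_1=K_1=h$, estableciendo $f\simeq h$.
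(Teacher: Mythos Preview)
Tu demostración es correcta y coincide esencialmente con la del artículo: las mismas tres homotopías (constante, reparametrización $t\mapsto 1-t$, y concatenación a velocidad doble) aparecen en ambos. De hecho eres más cuidadoso que el artículo al justificar explícitamente la continuidad mediante el lema del pegamiento en el caso transitivo, detalle que el artículo deja implícito.
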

\begin{proof}
    \noindent\begin{enumerate}
        \item Sea $f\in \mathrm{Map}(X,Y)$. Definamos $H:X\times [0,1]\to Y$ por $H(x,t)=f(x)$ para todo $(x,t)\in X\times [0,1]$. Note que, $H$ es una homotopía de $f$ en $f$. Así, $f\simeq f$. 
        \item Sean $f,g\in \mathrm{Map}(X,Y)$ tal que $f\simeq g$. Sea $H:X\times [0,1]\to Y$ una homotopía de $f$ en $g$. Definamos $\overleftarrow{H}:X\times [0,1]\to Y$ por $\overleftarrow{H}(x,t)=H(x,1-t)$ para todo $(x,t)\in X\times [0,1]$. Note que $\overleftarrow{H}$ es una homotopía de $g$ en $f$. Así, $g\simeq f$.
        \item Sean $f,g,h\in \mathrm{Map}(X,Y)$ tales que $f\simeq g$ y $g\simeq h$. Sean $F:X\times [0,1]\to Y$ una homotopía de $f$ en $g$ y $G:X\times [0,1]\to Y$ una homotopía de $g$ en $h$. Definamos $H:X\times [0,1]\to Y$ por \[H(x,t)=\begin{cases}
            F(x,2t),& \hbox{si $0\leq t\leq 1/2$;}\\
            G(x,2t-1),& \hbox{si $1/2\leq t\leq 1$.}\\
        \end{cases}\] Note que, $H$ es una homotopía de $f$ en $h$. Así, $f\simeq h$.
    \end{enumerate}
\end{proof}

Por la Proposición~\ref{proposition:relacion-homotopia} podemos considerar el conjunto cociente \begin{align*}
    \mathrm{Map}(X,Y)/\simeq=\{[f]:~f\in \mathrm{Map}(X,Y)\},
\end{align*} donde cada clase de equivalencia $[f]=\{g\in\mathrm{Map}(X,Y):g\simeq f\}$ es llamada \textit{clase de homotopía}\index{Clase de homotopía} de $f$. Así, el conjunto cociente $\mathrm{Map}(X,Y)/\simeq$ es llamado \textit{conjunto de clases de homotopía}\index{Conjunto de clases de homotopía} de $X$ en $Y$. 

\begin{example}\label{mas-ejemplos}
    \noindent\begin{enumerate}
        \item[(1)] Sean $f,g:[0,1]\to S^1$ aplicaciones continuas dadas por $f(s)=\left(\cos{2\pi s},\sin{2\pi s}\right), \forall s\in [0,1]$, y $g(s)=-f(s), \forall s\in [0,1]$. Tenemos que $f\simeq g$. De hecho, por el Ítem (2) del Ejemplo~\ref{first-example} tenemos que $f\simeq\overline{(1,0)}$ y $g\simeq\overline{(-1,0)}$. Veamos ahora que $\overline{(1,0)}\simeq \overline{(-1,0)}$. Consideremos los caminos $\gamma_1,\gamma_2:[0,1]\to S^1$ dados por \begin{align*}
          \gamma_1(s)&=\dfrac{(1-s)(1,0)+s(0,1)}{\|(1-s)(1,0)+s(0,1)\|},\\  
          \gamma_2(s)&=\dfrac{(1-s)(0,1)+s(-1,0)}{\|(1-s)(0,1)+s(-1,0)\|}.\\  
        \end{align*} El camino $\gamma:[0,1]\to S^1$ dado por $$\gamma(s)=\begin{cases}
            \gamma_1(2s),&\hbox{si $0\leq s\leq 1/2$;}\\
            \gamma_2(2s-1),&\hbox{si $1/2\leq s\leq 1$}\\
        \end{cases}$$ cumple que $\gamma(0)=(1,0)$ y $\gamma(1)=(-1,0)$. Luego la aplicación $H:[0,1]\times [0,1]\to S^1$ dada por $H(s,t)=\gamma(t), \forall (s,t)\in [0,1]\times [0,1]$ es una homotopía de $\overline{(1,0)}$ en $\overline{(-1,0)}$. Así, por la Proposición~\ref{proposition:relacion-homotopia} podemos concluir que $f\simeq g$.
        \item[(2)] Sean $f,g:X\to \mathbb{R}^n\setminus\{0\}$ aplicaciones continuas tales que para cada $x\in X$ existe $\lambda> 0$ con $g(x)=\lambda f(x)$. Tenemos que $f\simeq g$. De hecho, consideremos la aplicación $H:X\times [0,1]\to \mathbb{R}^n\setminus\{0\}$ dada por $H(x,t)=(1-t)f(x)+tg(x), \forall (x,t)\in X\times [0,1]$. Veamos que $H$ está bien definida, o sea, $H(x,t)\in \mathbb{R}^n\setminus\{0\}$, para todo $(x,t)\in X\times [0,1]$. Supongamos que $(1-t)f(x)+tg(x)=0$ para algún $(x,t)\in X\times [0,1]$. Note que $0<t<1$. Para tal $x\in X$, consideremos $\lambda> 0$ con $g(x)=\lambda f(x)$. Luego, \begin{align*}
            (1-t)f(x)+t\lambda f(x)&=(0,0),\\
            (1-t)f(x)&=-t\lambda f(x),\\
            (1-t)\|f(x)\|&=t\lambda \|f(x)\|,\\
            1-t&=t\lambda.
        \end{align*} Entonces, reemplazando en la igualdad $(1-t)f(x)+t\lambda f(x)=0$, obtenemos que $2(1-t)f(x)=0$. Como $f(x)\neq 0$ entonces $t=1$, lo cual es una contradicción. Así, $H$ está bien definida. Luego, podemos concluir que $H$ es una homotopía de $f$ en $g$.  
    \end{enumerate}
\end{example}

\begin{example}
    \noindent\begin{enumerate}
        \item[(1)] Sean $f,g:S^1\to \mathbb{R}^2\setminus\{(0,0)\}$ aplicaciones continuas dadas por $f(x,y)=(x,y), \forall (x,y)\in S^1$, y $g(x,y)=(x,y)+(3,3), \forall (x,y)\in S^1$. Note que, $f$ no es homotópica a $g$, ya que si $H:S^1\times [0,1]\to \mathbb{R}^2\setminus\{(0,0)\}$ es una homotopía de $f$ en $g$ entonces existen $(x_0,t_0)\in S^1\times [0,1]$ tal que $H(x_0,t_0)=(0,0)$, lo cual es una contradicción.
    \item[(2)] Sean $f,g:S^1\to \mathbb{R}^3\setminus\{(0,0,0)\}$ aplicaciones continuas dadas por $f(x,y)=(x,y,0), \forall (x,y)\in S^1$, y $g(x,y)=(x,y,0)+(3,3,0), \forall (x,y)\in S^1$. Veamos que, $f\simeq g$. La aplicación $F:S^1\times [0,1]\to \mathbb{R}^3\setminus\{(0,0,0)\},~H((x,y),t)=(x,y,t)$, es una homotopía de $f$ en $h$, donde $h(x,y)=(x,y,1)$. Además, la aplicación $G:S^1\times [0,1]\to \mathbb{R}^3\setminus\{(0,0,0)\},~G((x,y),t)=(1-t)(x,y,1)+tg(x,y)$, es una homotopía de $h$ en $g$. Así, por la transitividad de la relación de homotopía (ver Proposición~\ref{proposition:relacion-homotopia}), obtenemos que $f\simeq g$.  
    \end{enumerate}
\end{example}

\medskip Ahora, veamos que la relación de homotopía se mantiene con respecto a la operación de composición.

\begin{proposition}\label{proposition:homotopia-compuesta}
Sean $f,g:X\to Y$ y $f',g':Y\to Z$ aplicaciones continuas. Si $f\simeq g$ y $f'\simeq g'$ entonces $(f'\circ f)\simeq (g'\circ g)$.
\end{proposition}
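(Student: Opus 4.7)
El plan es deducir la conclusión descomponiendo el cambio de $f'\circ f$ a $g'\circ g$ en dos pasos consecutivos y apelando a la transitividad de $\simeq$ ya establecida en la Proposición~\ref{proposition:relacion-homotopia}. Por hipótesis, existen una homotopía $H\colon X\times [0,1]\to Y$ de $f$ en $g$ y una homotopía $H'\colon Y\times [0,1]\to Z$ de $f'$ en $g'$; el objetivo es fabricar con ellas dos homotopías intermedias que encadenen los cuatro mapas.

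Primero, construiré $K_1\colon X\times [0,1]\to Z$ por $K_1(x,t)=f'(H(x,t))$. Esta aplicación es continua por ser la composición $f'\circ H$ de aplicaciones continuas; además $K_1(x,0)=f'(f(x))$ y $K_1(x,1)=f'(g(x))$, luego $f'\circ f\simeq f'\circ g$. En segundo lugar, definiré $K_2\colon X\times [0,1]\to Z$ por $K_2(x,t)=H'(g(x),t)$, que es composición de $(x,t)\mapsto (g(x),t)$ con $H'$, ambas continuas, y cumple $K_2(x,0)=f'(g(x))$, $K_2(x,1)=g'(g(x))$, con lo cual $f'\circ g\simeq g'\circ g$. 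Aplicando la propiedad transitiva a $K_1$ y $K_2$ se concluye $f'\circ f\simeq g'\circ g$.

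No prevé obstáculo técnico serio; el único cuidado es no confundir, al definir cada homotopía, qué variable queda fija y cuál es el parámetro temporal. Como variante más compacta, podría definirse directamente una sola homotopía $K\colon X\times [0,1]\to Z$ por $K(x,t)=H'(H(x,t),t)$, cuya continuidad se sigue de componer $H$, $H'$ y la aplicación diagonal $t\mapsto(t,t)$, y verificar $K(x,0)=f'(f(x))$, $K(x,1)=g'(g(x))$; esto evita invocar transitividad y produce la homotopía deseada de manera explícita.
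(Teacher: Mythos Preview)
Tu propuesta es correcta. La demostración del artículo coincide literalmente con la \emph{variante compacta} que mencionas al final: define una única homotopía $F(x,t)=G(H(x,t),t)$ (con tu notación, $K(x,t)=H'(H(x,t),t)$) y verifica directamente los extremos, sin pasar por el punto intermedio $f'\circ g$ ni invocar transitividad.
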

\begin{proof}
    Sea $H:X\times [0,1]\to Y$ una homotopía de $f$ en $g$ y $G:Y\times [0,1]\to Z$ una homotopía de $f'$ en $g'$. Definamos $F:X\times [0,1]\to Z$ por \[F(x,t)=G(H(x,t),t), \forall (x,t)\in X\times [0,1].\] Note que, $F$ es una homotopía de $f'\circ f$ en $g'\circ g$. Así, $(f'\circ f)\simeq (g'\circ g)$.
\end{proof}

\begin{corollary}\label{corollary:homo-igual}
  Sean $f,g:X\to Y$, $h':W\to X$ y $h:Y\to Z$ aplicaciones continuas. Si $f\simeq g$ entonces $f\circ h'\simeq g\circ h'$ y $h\circ f\simeq h\circ g$.   
\end{corollary}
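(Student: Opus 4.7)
El plan es obtener este corolario como una aplicación inmediata de la Proposición~\ref{proposition:homotopia-compuesta}, combinada con la propiedad reflexiva de la relación de homotopía (Proposición~\ref{proposition:relacion-homotopia}). La observación clave es que las dos afirmaciones del corolario corresponden precisamente a los dos casos degenerados de la proposición anterior en los que una de las dos homotopías involucradas es la homotopía constante (o sea, la identidad bajo $\simeq$).

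Más precisamente, para la primera afirmación $f\circ h'\simeq g\circ h'$, notaría que por reflexividad $h'\simeq h'$, y que por hipótesis $f\simeq g$. Aplicando entonces la Proposición~\ref{proposition:homotopia-compuesta} a los pares de aplicaciones $h',h':W\to X$ y $f,g:X\to Y$ se deduce $f\circ h'\simeq g\circ h'$. De manera completamente análoga, para la segunda afirmación $h\circ f\simeq h\circ g$, uso reflexividad $h\simeq h$ junto con $f\simeq g$, y aplico la Proposición~\ref{proposition:homotopia-compuesta} a los pares $f,g:X\to Y$ y $h,h:Y\to Z$.

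No anticipo ningún obstáculo real en esta demostración, dado que se reduce a dos invocaciones directas de un resultado ya probado. Si se desea una demostración autocontenida que no invoque la Proposición~\ref{proposition:homotopia-compuesta}, alternativamente podría partir de una homotopía $H:X\times [0,1]\to Y$ de $f$ en $g$ y exhibir explícitamente las dos homotopías buscadas: $F:W\times [0,1]\to Y$ definida por $F(w,t)=H(h'(w),t)$, que es continua como composición de aplicaciones continuas y cumple $F_0=f\circ h'$, $F_1=g\circ h'$; y $G:X\times [0,1]\to Z$ definida por $G(x,t)=h(H(x,t))$, que es continua y cumple $G_0=h\circ f$, $G_1=h\circ g$. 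Esta ruta explícita sirve además como verificación de consistencia con la prueba vía Proposición~\ref{proposition:homotopia-compuesta}.
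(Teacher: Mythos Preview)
Tu propuesta es correcta y coincide con el enfoque del artículo: la afirmación se presenta allí como un corolario inmediato de la Proposición~\ref{proposition:homotopia-compuesta} (sin demostración explícita), y tu argumento mediante reflexividad de $\simeq$ aplicada a $h'$ y a $h$ es precisamente la justificación natural que el artículo deja implícita.
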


\subsection{Nulo homotopía y espacio contráctil}\label{sec:nulhomotopy}
En esta sección recordamos la definición y propiedades básicas de nulo homotopía y contráctibilidad. La Proposición~\ref{proposition:o-composta} dice que la composición de dos aplicaciones es nulo homotópica siempre que una de las aplicaciones sea nulo homotópica. El caso particular cuando la identidad es nulo homotópica origina el concepto de contractibilidad (Definición~\ref{definition:contractil-space}). Proposición~\ref{proposition:o-contractil-nulo} muestra que  toda aplicación cuyo dominio o codominio sea contráctil es nulo homotópica. Además, la Proposición~\ref{proposition:codomain-contractil} dice que toda aplicación continua con codominio contráctil es homotópica a una aplicación constante deseada. En particular, cualesquier dos aplicaciones con el mismo codominio contráctil son homotópicas (ver Corollary~\ref{corollary:contractil-codo-maps-homotopic}). La Proposición~\ref{proposition:domain-contractil} afirma que toda aplicación continua con dominio contráctil y contradominio conexo por caminos es homotópica a una aplicación constante deseada. Como consecuencia, cualesquier dos aplicaciones con el mismo dominio contráctil y el mismo codominio conexo por caminos son homotópicas (Corolario~\ref{corollary:contractible-domain-path-conn-codomain-homotopy}).      

\begin{definition}[Nulo homotopía]
    Una aplicación continua $f:X\to Y$ es llamada \textit{nulo homotópica}\index{Nulo homotópica} cuando existe una constante $y_0\in Y$ y una homotopía de $f$ en la aplicación constante $\overline{y_0}:X\to Y,~x\mapsto y_0$. O sea, existe una aplicación continua $H:X\times [0,1]\to Y$ tal que $H_0=f$ y $H_1=\overline{y_0}$ (para algún $y_0\in Y$). Tal $H$ es llamada \textit{nulo homotopía}\index{Nulo homotopía}.  
\end{definition}

\begin{example}
    Toda aplicación constante es nulo homotópica. 
\end{example}

El siguiente resultado muestra que la composición de dos aplicaciones es nulo homotópica siempre que una de las aplicaciones sea nulo homotópica.   

\begin{proposition}\label{proposition:o-composta}
 Sean $f:X\to Y$, $g:Y\to Z$ aplicaciones continuas. Si $f$ o $g$ es nulo homotópica entonces la composición $g\circ f$ es nulo homotópica.   
\end{proposition}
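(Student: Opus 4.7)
El plan es separar la demostración en dos casos, según cuál de las dos aplicaciones sea nulo homotópica, y en cada caso apelar al Corolario~\ref{corollary:homo-igual} (que se acaba de establecer como consecuencia de la Proposición~\ref{proposition:homotopia-compuesta}) para trasladar la homotopía a la composición. La observación central, que aparecerá en ambos casos, es que la composición de una aplicación continua con una aplicación constante es de nuevo constante.

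Primero trataría el caso en que $f$ es nulo homotópica. Por definición existen $y_0\in Y$ y una homotopía $f\simeq \overline{y_0}$, donde $\overline{y_0}:X\to Y$ es la aplicación constante con valor $y_0$. Componiendo con $g$ por la izquierda y aplicando el Corolario~\ref{corollary:homo-igual}, obtendría $g\circ f\simeq g\circ \overline{y_0}$. Aquí la clave es notar que $g\circ \overline{y_0}$ coincide con la aplicación constante $\overline{g(y_0)}:X\to Z$, con lo cual $g\circ f$ es homotópica a una constante, es decir, nulo homotópica.

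El segundo caso, cuando $g$ es nulo homotópica, sigue la misma estrategia por el otro lado. Tomando $z_0\in Z$ y una homotopía $g\simeq \overline{z_0}$, y componiendo ahora con $f$ por la derecha, el Corolario~\ref{corollary:homo-igual} produce $g\circ f\simeq \overline{z_0}\circ f$. Como $\overline{z_0}\circ f$ es de nuevo la aplicación constante $\overline{z_0}:X\to Z$, concluiría que $g\circ f$ es nulo homotópica.

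No anticipo un obstáculo real en este enunciado; la única sutileza consiste en reconocer que al postcomponer o precomponer con una aplicación constante el resultado vuelve a ser constante, de modo que la homotopía producida por el Corolario~\ref{corollary:homo-igual} termina efectivamente en una aplicación constante. Si se quisiera evitar el uso del Corolario, bastaría exhibir directamente las nulo homotopías $G(x,t)=g(H(x,t))$ y $L(x,t)=K(f(x),t)$, donde $H$ y $K$ son las nulo homotopías de $f$ y $g$ respectivamente; su continuidad es inmediata por ser composiciones de aplicaciones continuas.
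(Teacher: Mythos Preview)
Your proof is correct and follows essentially the same approach as the paper: split into the two cases, apply Corolario~\ref{corollary:homo-igual} to transport the homotopy to the composition, and observe that composing with a constant map yields a constant map. The additional remark about constructing the null homotopies explicitly is a nice touch but is not needed for the argument.
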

\begin{proof}
    Supongamos que $f\simeq \overline{y_0}$ (para algún $y_0\in Y$). Por el Corolario~\ref{corollary:homo-igual}, $g\circ f\simeq g\circ\overline{y_0}$. Pero, $g\circ\overline{y_0}=\overline{z_0}$, donde $z_0=g(y_0)$. Así, $g\circ f$ es nulo homotópica.  

    Ahora, supongamos que $g\simeq\overline{z_0}$ (para algún $z_0\in Z$), aquí la aplicación constante $\overline{z_0}$ es de $Y$ en $Z$. Nuevamente por el Corolario~\ref{corollary:homo-igual}, $g\circ f\simeq \overline{z_0}\circ f$. Pero, $\overline{z_0}\circ f$ es la aplicación constante $\overline{z_0}$ de $X$ en $Z$. Así, $g\circ f$ es nulo homotópica.   
\end{proof}

Para un conjunto $X$, denotemos por $1_X:X\to X,~x\mapsto x$, a la aplicación identidad. El caso particular cuando la identidad es nulo homotópica origina el concepto de contractibilidad.  

\begin{definition}[Contráctil]\label{definition:contractil-space}
\noindent\begin{enumerate}
    \item Un espacio topológico $X$ es llamado \textit{contráctil}\index{Contráctil}  cuando la aplicación identidad $1_X:X\to X$ es nulo homotópica.  O sea, existe una aplicación continua $H:X\times [0,1]\to X$ tal que $H_0=1_X$ y $H_1=\overline{x_0}$ (para algún $x_0\in X$).   
    \item  Sea $A\subset X$. Diremos que $A$ es \textit{contráctil en}\index{Contráctil en} $X$ cuando la aplicación inclusión $U\hookrightarrow X$ es nulo homotópica. O sea, existe una aplicación continua $H:U\times [0,1]\to X$ tal que $H(x,0)=x$ para todo $x\in U$ y $H(x,1)=x_0$ para todo $x\in X$ (para algún $x_0\in X$).   
\end{enumerate}  
\end{definition}

Note que $X$ es contráctil si y solamente si $X$ es contráctil en sí mismo. Además, para $A\subset X$, si $A$ es contráctil entonces $A$ es contráctil en $X$. La vuelta no es verdad. 

\begin{example}
    \noindent\begin{enumerate}
        \item El espacio euclidiano $\mathbb{R}^d$ es contráctil. De hecho, la aplicación $H:\mathbb{R}^d\times [0,1]\to \mathbb{R}^d,~(x,t)\mapsto H(x,t)=(1-t)x$, es una homotopía de la identidad $1_{ \mathbb{R}^d}$ en la aplicación constante $\overline{0}: \mathbb{R}^d\to  \mathbb{R}^d,~x\mapsto 0$.
        \item La esfera $S^{n}=\{(x_1,\ldots,x_{n+1},x_{n+2})\in S^{n+1}:~x_{n+2}=0\}$ es contráctil en $S^{n+1}$. De hecho, la aplicación $H:S^n\times [0,1]\to S^{n+1}, ~(x,t)\mapsto H(x,t)=\dfrac{(1-t)x+tp_N}{\| (1-t)x+tp_N\|}$, donde $p_N=(0,\ldots,0,1)\in S^{n+1}$ es el polo norte, es una homotopía de la inclusión $S^n\hookrightarrow S^{n+1}$ en la aplicación constante $\overline{p_N}:S^n\to S^{n+1},~x\mapsto p_N$.
    \end{enumerate}
\end{example}

El \textit{cono}\index{Cono} para un espacio topológico $X$ es dado por el espacio cociente \[\text{C}(X)=\dfrac{X\times [0,1]}{X\times \{0\}}.\] 

\begin{example}
   Tenemos que el cono $\text{C}(X)$ es contráctil. De hecho, podemos considerar la nulo homotopía $H:\text{C}(X)\times [0,1]\to \text{C}(X)$ dada por \[H([x,s],t)=[x,(1-t)s].\]
\end{example}

Note que, si $X$ es contráctil entonces $X$ es conexo por caminos, o sea, para cualesquier $x,y\in X$ existe una camino continuo $\gamma:[0,1]\to X$ tal que $\gamma(0)=x$ y $\gamma(1)=y$. De hecho, sea $H:X\times [0,1]\to X$ una aplicación continua tal que $H_0=1_X$ y $H_1=\overline{x_0}$ (para algún $x_0\in X$). Considere $\gamma:[0,1]\to X$ por \[\gamma(t)=\begin{cases}
    H(x,2t),&\hbox{si $0\leq t\leq 1/2$;}\\
    H(y,2-2t),&\hbox{si $1/2\leq t\leq 1$.}\\
\end{cases}\] Note que, $\gamma$ es el camino deseado. 

Ahora, veamos que toda aplicación cuyo dominio o codominio sea contráctil es nulo homotópica. 

\begin{proposition}\label{proposition:o-contractil-nulo}
 Sea $f:X\to Y$ una aplicación continua. Si $X$ o $Y$ es contráctil entonces $f$ es nulo homotópica.   
\end{proposition}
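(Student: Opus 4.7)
The plan is to reduce the claim directly to Proposition~\ref{proposition:o-composta}, which already tells us that any composition involving a null-homotopic map is null-homotopic. The key observation is the trivial identity $f = 1_Y \circ f = f \circ 1_X$, so as soon as one of $1_X$ or $1_Y$ is null-homotopic, one of these compositions will be covered by Proposition~\ref{proposition:o-composta}.

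More precisely, I would split into two cases. First, suppose $Y$ is contractible. By Definición~\ref{definition:contractil-space}, the identity $1_Y:Y\to Y$ is null-homotopic, so $1_Y\simeq \overline{y_0}$ for some $y_0\in Y$. Writing $f=1_Y\circ f$ and applying Proposición~\ref{proposition:o-composta} with $g=1_Y$ (the null-homotopic factor), we conclude that $f=1_Y\circ f$ is null-homotopic. Second, suppose $X$ is contractible. Then $1_X:X\to X$ is null-homotopic, so writing $f=f\circ 1_X$ and applying Proposición~\ref{proposition:o-composta} with the first factor $1_X$ null-homotopic, we get again that $f$ is null-homotopic.

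There is no real obstacle here: the statement is essentially a corollary of Proposición~\ref{proposition:o-composta} once one notices the factorizations $f=1_Y\circ f$ and $f=f\circ 1_X$. If one preferred a direct argument without invoking Proposición~\ref{proposition:o-composta}, one could also construct the homotopies explicitly: in the first case, take a contraction $K:Y\times[0,1]\to Y$ of $1_Y$ to $\overline{y_0}$ and define $H(x,t)=K(f(x),t)$; in the second, take a contraction $L:X\times[0,1]\to X$ of $1_X$ to $\overline{x_0}$ and define $H(x,t)=f(L(x,t))$. Either route yields the desired null-homotopy in a couple of lines.
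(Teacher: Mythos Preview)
Your proof is correct and matches the paper's argument exactly: factor $f$ as $f\circ 1_X$ or $1_Y\circ f$, use contractibility to see that the relevant identity map is null-homotopic, and apply Proposici\'on~\ref{proposition:o-composta}. The only cosmetic difference is that the paper treats the case of $X$ contractible first and $Y$ second, whereas you reverse the order and add the optional explicit homotopies as an aside.
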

\begin{proof}
    Supongamos que $X$ es contractil, o sea, la aplicación identidad $1_X:X\to X$ es nulo homotópica. Note que $f=f\circ 1_X$. Por otro lado, la Proposición~\ref{proposition:o-composta} implica que $f\circ 1_X$ es nulo homotópica y así $f$ es nulo homotópica.

    Ahora, supongamos que $Y$ es nulo homotópica, o sea, la aplicación identidad $1_Y:Y\to Y$ es nulo homotópica. Note que, $f=1_Y\circ f$. Nuevamente la Proposición~\ref{proposition:o-composta} implica que $1_Y\circ f$ es nulo homotópica y así $f$ es nulo homotópica. 
\end{proof}

Además, tenemos que toda aplicación continua con codominio contráctil es homotópica a una aplicación constante deseada. 

\begin{proposition}\label{proposition:codomain-contractil}
Sea $f:X\to Y$ una aplicación continua con $Y$ contráctil y $y_0\in Y$. Entonces $f\simeq \overline{y_0}$. 
\end{proposition}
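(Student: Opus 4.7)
La estrategia consiste en combinar tres ingredientes ya disponibles: que toda aplicación con codominio contráctil es nulo homotópica (Proposición~\ref{proposition:o-contractil-nulo}), que un espacio contráctil es conexo por caminos (observación hecha justo antes de la Proposición~\ref{proposition:o-contractil-nulo}), y la transitividad de la relación de homotopía (Proposición~\ref{proposition:relacion-homotopia}). El punto delicado es que la nulo homotopía que se obtiene de manera directa termina en \emph{algún} punto $y_1\in Y$ que no tenemos control a priori, mientras que el enunciado pide la homotopía a la constante en el punto prescrito $y_0$.

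El plan es el siguiente. Primero, aplico la Proposición~\ref{proposition:o-contractil-nulo} a $f$: como $Y$ es contráctil, existe $y_1\in Y$ tal que $f\simeq \overline{y_1}$. En segundo lugar, muestro que $\overline{y_1}\simeq \overline{y_0}$ como aplicaciones $X\to Y$. Para esto uso que $Y$, siendo contráctil, es conexo por caminos, de modo que existe un camino continuo $\gamma:[0,1]\to Y$ con $\gamma(0)=y_1$ y $\gamma(1)=y_0$. Defino entonces $K:X\times[0,1]\to Y$ por
\[
K(x,t)=\gamma(t),\qquad (x,t)\in X\times[0,1],
\]
que es continua (composición de la proyección sobre $[0,1]$ con $\gamma$) y satisface $K_0=\overline{y_1}$, $K_1=\overline{y_0}$, así que $\overline{y_1}\simeq \overline{y_0}$. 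Finalmente, por la transitividad en la Proposición~\ref{proposition:relacion-homotopia}, de $f\simeq \overline{y_1}$ y $\overline{y_1}\simeq \overline{y_0}$ concluyo $f\simeq \overline{y_0}$.

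Una variante más explícita, que evita invocar la nulo homotopía general, es partir de una contracción $G:Y\times[0,1]\to Y$ con $G_0=1_Y$ y $G_1=\overline{y_1}$, combinarla con el camino $\gamma$ anterior para obtener una homotopía $\widetilde{G}:Y\times[0,1]\to Y$ de $1_Y$ en $\overline{y_0}$ (concatenando $G$ con $K$ a la Proposición~\ref{proposition:relacion-homotopia}(3)), y luego definir $H:X\times[0,1]\to Y$ por $H(x,t)=\widetilde{G}(f(x),t)$. Así $H_0=f$ y $H_1=\overline{y_0}$ directamente. El único paso que requiere cuidado es observar que la concatenación produce efectivamente una homotopía continua con los extremos correctos, lo cual es exactamente el argumento ya hecho en la prueba de la transitividad en la Proposición~\ref{proposition:relacion-homotopia}. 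No anticipo dificultades técnicas reales; el corazón del argumento es simplemente elegir correctamente el punto final mediante el uso de la conexidad por caminos del codominio contráctil.
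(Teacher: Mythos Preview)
Your argument is correct. The paper's proof follows essentially your ``variante más explícita'', but with one small efficiency: instead of invoking path-connectedness to choose an auxiliary path $\gamma$ from $y_1$ to $y_0$, it extracts the path directly from the contraction $H$ itself, namely $s\mapsto H(y_0,1-s)$, which runs from $H(y_0,1)=y_1$ to $H(y_0,0)=y_0$. Concretely, the paper sets
\[
G(x,t)=\begin{cases} H(f(x),2t), & 0\le t\le 1/2,\\ H(y_0,2-2t), & 1/2\le t\le 1,\end{cases}
\]
and checks that $G$ is a homotopy from $f$ to $\overline{y_0}$. So both proofs concatenate a homotopy $f\simeq\overline{y_1}$ with a path from $y_1$ to $y_0$; your version is modular (it cites Proposición~\ref{proposition:o-contractil-nulo}, the path-connectedness remark, and transitivity), while the paper's is a single self-contained formula that never needs to name path-connectedness separately.
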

\begin{proof}
Sea $H:Y\times [0,1]\to Y$ una aplicación continua con $H_0=1_Y$ y $H_1=\overline{y_1}$ (para algún $y_1\in Y$). Considere la aplicación $G:X\times [0,1]\to Y$ dada por \[G(x,t)=\begin{cases}
    H(f(x),2t),&\hbox{si $0\leq t\leq 1/2$;}\\
    H(y_0,2-2t),&\hbox{si $1/2\leq t\leq 1$}.\\
\end{cases}\] Note que $G$ es una homotopía de $f$ en $\overline{y_0}$.
\end{proof}

Proposición~\ref{proposition:codomain-contractil} implica la siguiente afirmación.

\begin{corollary}\label{corollary:contractil-codo-maps-homotopic}
   Sean $f,g:X\to Y$ aplicaciones continua con $Y$ contráctil. Entonces $f\simeq g$. 
\end{corollary}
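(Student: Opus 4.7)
El plan es reducir el enunciado inmediatamente a la Proposición~\ref{proposition:codomain-contractil}, junto con las propiedades de equivalencia de la relación $\simeq$ establecidas en la Proposición~\ref{proposition:relacion-homotopia}. Puesto que $Y$ es contráctil, $Y$ es no vacío (la identidad $1_Y$ admite una nulo homotopía, lo cual requiere al menos un punto en $Y$), de modo que podemos fijar un punto $y_0\in Y$ y considerar la aplicación constante $\overline{y_0}:X\to Y$ como \emph{puente} entre $f$ y $g$.

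Primero aplicaría la Proposición~\ref{proposition:codomain-contractil} a la aplicación $f$ con el punto elegido $y_0$, obteniendo $f\simeq\overline{y_0}$. A continuación aplicaría la misma proposición a $g$, con el mismo $y_0$, para obtener $g\simeq\overline{y_0}$. Por la propiedad simétrica de $\simeq$ (Proposición~\ref{proposition:relacion-homotopia}, ítem (2)) la segunda homotopía da $\overline{y_0}\simeq g$, y por la propiedad transitiva (ítem (3) de la misma proposición) concluiría que $f\simeq g$.

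No anticipo ningún obstáculo técnico: la única sutileza es verificar que $Y\neq\emptyset$ para que la elección de $y_0$ tenga sentido, pero esto es automático del hecho de que $Y$ es contráctil. La prueba se reduce entonces a una aplicación directa de los dos resultados previos, sin necesidad de construir explícitamente ninguna homotopía.
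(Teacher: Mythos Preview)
Tu propuesta es correcta y coincide exactamente con el enfoque del artículo: el texto dice simplemente que la Proposición~\ref{proposition:codomain-contractil} implica el corolario, y tu argumento desarrolla precisamente esa implicación usando la transitividad y simetría de~$\simeq$.
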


También, la siguiente afirmación muestra que toda aplicación continua con dominio contráctil y contradominio conexo por caminos es homotópica a una aplicación constante deseada. 

\begin{proposition}\label{proposition:domain-contractil}
Sea $f:X\to Y$ una aplicación continua con $X$ contráctil, $Y$ conexo por caminos y $y_0\in Y$. Entonces $f\simeq \overline{y_0}$. 
\end{proposition}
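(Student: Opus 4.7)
La idea es descomponer la homotopía deseada $f\simeq \overline{y_0}$ en dos etapas encadenadas y aplicar la transitividad de la relación de homotopía (Proposición~\ref{proposition:relacion-homotopia}): primero estableceré $f\simeq \overline{y_1}$ para una constante $y_1\in Y$ que aparece naturalmente a partir de la contracción de $X$, y después conectaré $\overline{y_1}$ con $\overline{y_0}$ usando la conexidad por caminos de $Y$.

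\textbf{Primera etapa.} Como $X$ es contráctil, existe una aplicación continua $H:X\times [0,1]\to X$ tal que $H_0=1_X$ y $H_1=\overline{x_0}$ para algún $x_0\in X$. Considero la composición $f\circ H:X\times [0,1]\to Y$, que es continua y cumple $(f\circ H)_0=f\circ 1_X=f$ y $(f\circ H)_1=f\circ \overline{x_0}=\overline{f(x_0)}$. Tomando $y_1:=f(x_0)$ se obtiene explícitamente $f\simeq \overline{y_1}$. (Este paso puede verse como un caso particular de la Proposición~\ref{proposition:o-contractil-nulo}, pero aquí es importante tener el valor concreto $y_1=f(x_0)$ y no solo la existencia de alguna constante.) Para la segunda etapa, por hipótesis $Y$ es conexo por caminos, así que existe un camino continuo $\gamma:[0,1]\to Y$ con $\gamma(0)=y_1$ y $\gamma(1)=y_0$. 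Defino $G:X\times [0,1]\to Y$ por $G(x,t)=\gamma(t)$; esta aplicación es continua pues es la composición de la proyección $X\times [0,1]\to [0,1]$ con $\gamma$, y satisface $G_0=\overline{y_1}$ y $G_1=\overline{y_0}$, de modo que $\overline{y_1}\simeq \overline{y_0}$. Combinando ambas homotopías por transitividad se concluye $f\simeq \overline{y_0}$.

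\textbf{Principal dificultad.} No hay obstáculos técnicos severos; el punto que conviene resaltar es que la sola contractibilidad de $X$ produce una homotopía de $f$ a una aplicación constante cuyo valor está \emph{forzado} (es $f(x_0)$, donde $x_0$ es el punto al cual $X$ se contrae), mientras que el enunciado exige alcanzar un $y_0$ arbitrario. Es precisamente la conexidad por caminos de $Y$ lo que permite desplazar la constante de $y_1=f(x_0)$ a $y_0$; sin esa hipótesis el resultado fallaría cuando $y_0$ estuviera en una componente por caminos distinta a la de $f(x_0)$.
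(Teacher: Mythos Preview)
Your proof is correct and follows essentially the same approach as the paper: both use the contraction of $X$ composed with $f$ to obtain $f\simeq\overline{f(x_0)}$, then a path in $Y$ to pass from $\overline{f(x_0)}$ to $\overline{y_0}$. The only cosmetic difference is that the paper writes the concatenated homotopy explicitly (via the usual reparametrization on $[0,1/2]$ and $[1/2,1]$), whereas you present the two homotopies separately and invoke transitivity from Proposici\'on~\ref{proposition:relacion-homotopia}.
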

\begin{proof}
Sea $H:X\times [0,1]\to X$ una aplicación continua con $H_0=1_X$ y $H_1=\overline{x_1}$ (para algún $x_1\in X$). Sea $y_1=f(x_1)$ y considere un camino continuo $\gamma:[0,1]\to Y$ tal que $\gamma(0)=y_1$ y $\gamma(1)=y_0$. Considere la aplicación $G:X\times [0,1]\to Y$ dada por \[G(x,t)=\begin{cases}
    f(H(x,2t)),&\hbox{si $0\leq t\leq 1/2$;}\\
    \gamma(2t-1),&\hbox{si $1/2\leq t\leq 1$}.\\
\end{cases}\] Note que $G$ es una homotopía de $f$ en $\overline{y_0}$.
\end{proof}

Proposición~\ref{proposition:domain-contractil} implica la siguiente afirmación.

\begin{corollary}\label{corollary:contractible-domain-path-conn-codomain-homotopy}
   Sean $f,g:X\to Y$ aplicaciones continua con $X$ contráctil e $Y$ conexo por caminos. Entonces $f\simeq g$. 
\end{corollary}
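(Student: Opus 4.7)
El plan es deducir el resultado directamente de la Proposición~\ref{proposition:domain-contractil} usando la transitividad de la relación de homotopía. Como en la hipótesis tenemos exactamente las condiciones de la Proposición~\ref{proposition:domain-contractil} ($X$ contráctil e $Y$ conexo por caminos), la idea es elegir un punto auxiliar $y_0 \in Y$ y mostrar que tanto $f$ como $g$ son homotópicas a la aplicación constante $\overline{y_0}:X\to Y$.

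Primero fijaría un punto cualquiera $y_0 \in Y$ (el cual existe siempre que $Y$ sea no vacío; si $Y$ fuese vacío, entonces $X$ también lo sería al haber una aplicación $f:X\to Y$, y la afirmación sería trivialmente cierta por vacuidad). A continuación aplicaría la Proposición~\ref{proposition:domain-contractil} a la aplicación $f:X\to Y$ con el punto $y_0$ elegido, obteniendo $f\simeq \overline{y_0}$. De manera completamente análoga, aplicaría la misma proposición a $g:X\to Y$ con el \emph{mismo} punto $y_0$, obteniendo $g\simeq \overline{y_0}$.

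Para cerrar, usaría la Proposición~\ref{proposition:relacion-homotopia}: por la propiedad simétrica, de $g\simeq \overline{y_0}$ se sigue $\overline{y_0}\simeq g$; combinando esto con $f\simeq \overline{y_0}$ mediante la propiedad transitiva, concluiría $f\simeq g$, que es lo deseado.

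No anticipo ningún obstáculo técnico, pues el resultado es un corolario inmediato de la proposición previa y de que la relación $\simeq$ es de equivalencia. El único detalle delicado sería asegurarse de aplicar la Proposición~\ref{proposition:domain-contractil} con el \emph{mismo} punto $y_0$ en ambas aplicaciones (de lo contrario no se podría cerrar el argumento por transitividad con una sola composición), pero esto está garantizado porque la proposición permite elegir libremente el punto de $Y$.
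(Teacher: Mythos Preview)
Your proposal is correct and follows exactly the approach implied by the paper, which simply states that the corollary is a direct consequence of Proposición~\ref{proposition:domain-contractil}. Your explicit use of transitividad and simetría from Proposición~\ref{proposition:relacion-homotopia} just spells out the details that the paper leaves implicit.
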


Consideremos el espacio euclidiano infinito-dimensional $\mathbb{R}^\infty$ dado por \[\mathbb{R}^\infty=\{(x_j):~\text{cada $x_j\in\mathbb{R}$ y existe $n\geq 1$ tal que $x_j=0$ para todo $j\geq n$}\}\] y $\mathbb{R}^n=\{(x_j)\in \mathbb{R}^\infty:~x_j=0 \text{ para todo $j>n$}\}$. La topología de $\mathbb{R}^\infty$ es tal que $A\subset \mathbb{R}^\infty$ es cerrado (equivalentemente, abierto) si, y solamente si, $A\cap\mathbb{R}^n$ es cerrado en $\mathbb{R}^n$ (equivalentemente, abierto), para todo $n\geq 1$. Considere la siguiente aplicación $T:\mathbb{R}^\infty\to\mathbb{R}^\infty$ dada por $T(x_1,x_2,\ldots)=(0,x_1,x_2,\ldots)$ para todo $(x_1,x_2,\ldots)\in\mathbb{R}^\infty$.

\medskip La esfera infinito-dimensional $S^\infty$ es el subespacio de $\mathbb{R}^\infty$ dada por \[S^{\infty}=\{(x_j)\in \mathbb{R}^\infty:~x_1^2+x_2^2+\cdots=1\}.\] Note que tal suma $x_1^2+x_2^2+\cdots$ es finita. 

\medskip Ahora, de \cite[pg.111]{fadell1962}, recordemos el concepto de espacio de configuraciones ordenado. Para $X$ espacio topológico y $k\geq 1$, el \textit{$k$-th espacio de configuraciones ordenado}\index{$k$-th espacio de configuraciones ordenado} de $X$ es el subespacio del producto cartesiano $X^k=X\times\cdots\times X$ dado por:
\[\text{Conf}(X,k)=\{(x_1,\ldots,x_k)\in X^k:~\text{$x_i\neq x_j$ para todo $1\leq i\neq j\leq k$}\}.\] Por convención $\text{Conf}(X,1)=X$.

\medskip El siguiente resultado muestra que el espacio de configuraciones $\text{Conf}(\mathbb{R}^\infty,k)$ es contráctil, para cada $k\in \{1,2\}$. 

\begin{proposition}\label{proposition:conf-infty-2-contractil}
 Para $k\in \{1,2\}$, podemos construir explícitamente una nulo homotopía para $\text{Conf}(\mathbb{R}^\infty,k)$. En particular, tenemos que $\text{Conf}(\mathbb{R}^\infty,k)$ es contráctil.
\end{proposition}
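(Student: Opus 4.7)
La estrategia es tratar los casos $k=1$ y $k=2$ por separado y, en el caso no trivial, usar de manera esencial el desplazamiento $T:\mathbb{R}^\infty\to\mathbb{R}^\infty$, $T(x_1,x_2,\ldots)=(0,x_1,x_2,\ldots)$, que \emph{libera} la primera coordenada y permite separar las dos configuraciones con seguridad.

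Para $k=1$ se tiene $\mathrm{Conf}(\mathbb{R}^\infty,1)=\mathbb{R}^\infty$, y la homotopía lineal $H:\mathbb{R}^\infty\times[0,1]\to\mathbb{R}^\infty$, $H(x,t)=(1-t)x$, es una nulo homotopía de $1_{\mathbb{R}^\infty}$ en la aplicación constante $\overline{0}$, que es continua por la descripción de la topología de $\mathbb{R}^\infty$.

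Para $k=2$ construiré explícitamente una nulo homotopía en dos fases, pegándolas en $t=1/2$ (al estilo de la parte (3) de la Proposición~\ref{proposition:relacion-homotopia}). En la \textbf{Fase 1} propongo
\[
H_1((x,y),t)=\bigl((1-t)x+t\,Tx,\ (1-t)y+t\,Ty\bigr),\qquad t\in[0,1],
\]
seguida, en la \textbf{Fase 2}, de la interpolación lineal
\[
H_2((x,y),s)=\bigl((1-s)Tx+s\,e_1,\ (1-s)Ty-s\,e_1\bigr),\qquad s\in[0,1],
\]
donde $e_1=(1,0,0,\ldots)\in\mathbb{R}^\infty$. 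La composición (con las reparametrizaciones usuales) conecta la identidad con la aplicación constante $(e_1,-e_1)$, que es el punto que \emph{contraeremos}.

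El paso clave, y principal obstáculo, es verificar que ambas fases permanecen dentro de $\mathrm{Conf}(\mathbb{R}^\infty,2)$. Para la Fase 1, fijado $(x,y)$ con $z=x-y\neq 0$, sea $n$ el menor índice con $z_n\neq 0$. Por linealidad de $T$,
\[
(1-t)x+tTx-\bigl((1-t)y+tTy\bigr)=(1-t)z+t\,Tz.
\]
La coordenada $n$-ésima de este vector es $(1-t)z_n$, que se anula sólo si $t=1$; y en $t=1$ el vector es $Tz$, cuya coordenada $(n{+}1)$-ésima vale $z_n\neq 0$. Por tanto la Fase 1 nunca sale del espacio de configuraciones, y en su extremo final envía $(x,y)$ a $(Tx,Ty)$, cuyas primeras coordenadas son ambas nulas. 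Para la Fase 2, usando que $(Tx)_1=(Ty)_1=0$, la primera coordenada de la diferencia de los dos puntos en el instante $s$ vale $2s$, que se anula sólo en $s=0$; pero en $s=0$ los puntos son $Tx$ y $Ty$, que son distintos (pues $T$ es inyectiva y $x\neq y$). En consecuencia también la Fase 2 se mantiene en $\mathrm{Conf}(\mathbb{R}^\infty,2)$.

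Finalmente, queda observar que las dos fórmulas concuerdan en el punto de pegado $(Tx,Ty)$, de modo que la aplicación por pedazos es continua, y que cada una es continua en su bloque por ser composición de la suma, de $T$ y de la multiplicación por escalar, todas continuas respecto a la topología descrita para $\mathbb{R}^\infty$. Esto produce la nulo homotopía explícita deseada y, en particular, muestra que $\mathrm{Conf}(\mathbb{R}^\infty,k)$ es contráctil para $k\in\{1,2\}$.
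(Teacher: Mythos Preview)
Your proof is correct and follows essentially the same two–phase strategy as the paper: Phase~1 is literally the homotopy $(1-t)\,\mathrm{id}+t\,T$ used there, and Phase~2 is a linear interpolation from $(Tx,Ty)$ to a fixed configuration, the only cosmetic difference being that you contract to $(e_1,-e_1)$ while the paper contracts to $(e_1,e_2)$. Your verification of well-definedness in Phase~1 (via the minimal nonzero index of $z=x-y$) is in fact slightly more careful than the paper's, which tacitly treats the case $t=1$ separately.
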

\begin{proof}
    El caso $k=1$ sigue de \cite[Example 1B.3, pg. 88]{hatcher2002}. Veamos para $k= 2$. Considere la aplicación $H:\text{Conf}(\mathbb{R}^\infty,2)\times [0,1]\to \text{Conf}(\mathbb{R}^\infty,2)$ dada por
    \begin{align*}
      H_t\left((x_1,x_2,\ldots),(y_1,y_2,\ldots)\right)&=((1-t)x_1,(1-t)x_2+tx_1,\ldots),\\
      & ((1-t)y_1,(1-t)y_2+ty_1,\ldots)).  
    \end{align*}
Veamos que $H$ está bien definida. Supongamos que \[((1-t)x_1,(1-t)x_2+tx_1,\ldots)=((1-t)y_1,(1-t)y_2+ty_1,\ldots).\] Como $t\neq 1$, sigue que $x_{1}=y_{1}$ y así $x_{2}=y_{2}$, $x_{3}=y_{3}, \ldots$, lo cual es una contradicción, ya que $\left((x_1,x_2,\ldots),(y_1,y_2,\ldots)\right)\in \text{Conf}(\mathbb{R}^\infty,2)$. Así $H$ es una homotopía entre la identidad $1_{\text{Conf}(\mathbb{R}^\infty,2)}$ y la aplicación $H_1=T\times T$, recuerde que $T(x_1,x_2,\ldots)=(0,x_{1},x_{2},\ldots)$.  

Ahora, considere la aplicación $G:\text{Conf}(\mathbb{R}^\infty,2)\times [0,1]\to \text{Conf}(\mathbb{R}^\infty,2)$ dada por \begin{align*}
  G_t\left((x_1,x_2,\ldots),(y_1,y_2,\ldots)\right)&=((t,(1-t)x_1,(1-t)x_2,\ldots),(0,(1-t)y_1+t,\\
  & (1-t)y_2,\ldots)).  
\end{align*}  Veamos que $G$ está bien definida. Supongamos que $(t,(1-t)x_1,(1-t)x_2,\ldots)=(0,(1-t)y_1+t,(1-t)y_2,\ldots)$. Luego, $t=0$ y así $x_1=y_1, x_2=y_2,\ldots$, lo cual es una contradicción, ya que $\left((x_1,x_2,\ldots),(y_1,y_2,\ldots)\right)\in \text{Conf}(\mathbb{R}^\infty,2)$. Así, $G$ es una homotopía de $T\times T$ en la aplicación constante $\overline{\left((1,0,0,\ldots),(0,1,0,\ldots)\right)}$. 

Por la demostración del Ítem 3 de la Proposición~\ref{proposition:relacion-homotopia}, tenemos que $F:\text{Conf}(\mathbb{R}^\infty,2)\times [0,1]\to \text{Conf}(\mathbb{R}^\infty,2)$ dada por \[F_t\left((x_1,x_2,\ldots),(y_1,y_2,\ldots)\right)=\begin{cases}
            H_{2t}\left((x_1,x_2,\ldots),(y_1,y_2,\ldots)\right),& \hbox{si $0\leq t\leq 1/2$;}\\
            G_{2t-1}\left((x_1,x_2,\ldots),(y_1,y_2,\ldots)\right),& \hbox{si $1/2\leq t\leq 1$.}\\
        \end{cases}\]  es una nulo homotopía de $1_{\text{Conf}(\mathbb{R}^\infty,2)}$ en $\overline{\left((1,0,0,\ldots),(0,1,0,\ldots)\right)}$. 
\end{proof}

Por la Proposición~\ref{proposition:conf-infty-2-contractil} formulamos el siguiente problema.

\begin{problem}
\noindent\begin{enumerate}
    \item[(1)]  Para $k\geq 3$, construya explícitamente una nulo homotopía para $\text{Conf}(\mathbb{R}^\infty,k)$.
    \item[(2)] Para cada $k\geq 1$, construya explícitamente una nulo homotopía para $\text{Conf}(S^\infty,k)$. Para el caso $k=1$, i.e., $\text{Conf}(S^\infty,1)=S^\infty$, ver \cite[Example 1B.3, pg. 88]{hatcher2002} o \cite[Theorem 11.1.3, pg. 332]{aguilar2002}.
\end{enumerate}    
\end{problem}

Sea $\mathcal{A}$ una colección de subconjuntos del espacio $X$.
\begin{enumerate}
    \item Se dice que $\mathcal{A}$ es \textit{localmente finita}\index{Localmente finita} en $X$ si para cada $x\in X$ existe un abierto $U$ de $X$ tal que el conjunto \[\{A\in\mathcal{A}:~U\cap A\neq\varnothing\}\] es finito, ver \cite[pg. 278]{munkres2000}. 
    \item Una colección $\mathcal{B}$ de subconjuntos de $X$ se dice que es un \textit{refinamiento}\index{Refinamiento} de $\mathcal{A}$ (o que \textit{refina}\index{Refina} a $\mathcal{A}$) si para cada elemento $B\in\mathcal{B}$ existe un elemento $A\in\mathcal{A}$ tal que $B\subset A$. Si los elementos de $\mathcal{B}$ son conjuntos abiertos, llamamos a $\mathcal{B}$ un \textit{refinamiento abierto}\index{Refinamiento abierto} de $\mathcal{A}$; si son conjuntos cerrados, llamamos a $\mathcal{B}$ un \textit{refinamiento cerrado}\index{Refinamiento cerrado}, ver \cite[pg. 280]{munkres2000}. 
    \item Se dice que $\mathcal{A}$ tiene \textit{orden}\index{Orden} $m+1$ si algún punto de $X$ pertenece a $m+1$ elementos de $\mathcal{A}$, y no existe ningún punto en $X$ que pertenezca a más de $m+1$ elementos de $\mathcal{A}$, ver \cite[pg. 347]{munkres2000}. 
\end{enumerate} 

\medskip Sea $X$ un espacio topológico.
\begin{enumerate}
    \item Se dice que $X$ es \textit{paracompacto}\index{Paracompacto} si todo cubrimiento abierto $\mathcal{A}$ de $X$ tiene un refinamiento abierto localmente finito $\mathcal{B}$ que recubre $X$, ver \cite[pg. 288]{munkres2000}.
    \item Se dice que $X$ es de \textit{dimensión finita}\index{Dimensión finita} si existe algún entero $m$ tal que para todo cubrimiento abierto $\mathcal{A}$ de $X$ existe un cubrimiento abierto $\mathcal{B}$ de $X$ que refina a $\mathcal{A}$ y que tiene orden $m+1$ como máximo. La \textit{dimensión topológica (o de cobertura)}\index{Dimensión topológica o de cobertura} de $X$, denotada por $\text{dim}(X)$, se define como el menor valor $m$ que satisface lo anterior, ver \cite[pg. 348]{munkres2000}.  
\end{enumerate}

\begin{remark}
  De \cite[Theorem 2.1, pg. 29]{zapata2017} tenemos que el espacio de configuraciones $\text{Conf}(M,k)$ nunca es contráctil para ningún $k\geq 2$, donde $M$ es una variedad topológica conexa de dimensión finita. Además, es planteada la siguiente conjetura.  
\end{remark} 

\begin{conjecture}
     Si $X$ es un espacio paracompacto y conexo por caminos con dimensión de cobertura $1\leq \text{dim}(X)<\infty$. Entonces los espacios de configuraciones $\text{Conf}(X,k)$ nunca son contráctiles, para $k\geq 2$.
\end{conjecture}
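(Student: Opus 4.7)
\medskip\noindent\textbf{Esbozo de demostración.} La estrategia es explotar la acción libre del grupo simétrico sobre el espacio de configuraciones. Para $k\geq 2$, considérese la involución $\sigma:\text{Conf}(X,k)\to \text{Conf}(X,k)$ dada por $\sigma(x_1,x_2,x_3,\ldots,x_k)=(x_2,x_1,x_3,\ldots,x_k)$. Dado que todo punto de $\text{Conf}(X,k)$ tiene coordenadas distintas dos a dos, $\sigma$ no admite puntos fijos; asumiendo además que $X$ es Hausdorff (hipótesis habitualmente implícita en la paracompacidad), la acción es propiamente discontinua y la proyección cociente $q:\text{Conf}(X,k)\to Y:=\text{Conf}(X,k)/\langle\sigma\rangle$ es un recubrimiento de dos hojas.

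Suponiendo, por contradicción, que $\text{Conf}(X,k)$ fuese contráctil, $q$ sería un recubrimiento universal con grupo estructural $\mathbb{Z}/2$, por lo que la sucesión exacta larga de homotopía daría $\pi_1(Y)\cong\mathbb{Z}/2$ y $\pi_n(Y)=0$ para $n\geq 2$. En particular, $Y$ tendría el tipo de homotopía débil de un espacio de Eilenberg-MacLane $K(\mathbb{Z}/2,1)$, y la clase característica $\alpha\in\check{H}^1(Y;\mathbb{Z}/2)$ del recubrimiento doble cumpliría $\alpha^n\neq 0$ en $\check{H}^n(Y;\mathbb{Z}/2)$ para todo $n\geq 1$, reflejando la estructura polinomial conocida de la cohomología del $K(\mathbb{Z}/2,1)$.

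Por otro lado, la teoría clásica de dimensión para espacios paracompactos Hausdorff (véase \cite[pg. 348]{munkres2000}) asegura que $X^k$ es paracompacto con $\text{dim}(X^k)\leq k\cdot\text{dim}(X)<\infty$, y tanto el subespacio abierto $\text{Conf}(X,k)\subset X^k$ como el cociente $Y$ bajo una acción libre de grupo finito heredan dimensión de cobertura finita. El teorema de Alexandrov sobre anulación de la cohomología de Čech en grado estrictamente mayor que la dimensión de cobertura implicaría entonces $\alpha^{n+1}=0$ para $n=\text{dim}(Y)$, contradiciendo la no-anulación del párrafo anterior.

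La principal dificultad estará en justificar con rigor la no-anulación de las potencias $\alpha^n$ en la cohomología de Čech cuando el recubrimiento doble tiene espacio total contráctil. Para CW-complejos o ANR el argumento es inmediato pues $Y$ es homotópicamente equivalente a un $K(\mathbb{Z}/2,1)$ de tipo CW y las cohomologías singular y de Čech coinciden. En la generalidad paracompacta sería necesario apelar a la aplicación clasificatoria $Y\to B\mathbb{Z}/2$ y a las propiedades universales del fibrado universal $E\mathbb{Z}/2\to B\mathbb{Z}/2$ en la categoría paracompacta, o bien establecer primero el resultado bajo una hipótesis adicional de contractibilidad local y después discutir extensiones vía aproximaciones.
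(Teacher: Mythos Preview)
The statement is recorded in the paper as a \textbf{Conjetura}, with no proof offered; the paper merely poses it after citing the known manifold case from \cite{zapata2017}. There is therefore no ``paper's proof'' to compare your proposal against---you are attempting to settle what the author presents as an open problem.

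Your strategy (free $\mathbb{Z}/2$-action by transposing two coordinates, quotient would be a $K(\mathbb{Z}/2,1)$, unbounded cup-powers of the characteristic class versus dimension-based \v{C}ech vanishing) is the natural line of attack and does go through cleanly for CW-complexes, ANRs, or metrizable spaces. At the stated generality, however, there are substantive gaps beyond the one you flag in your final paragraph. You assert that ``$X^k$ es paracompacto con $\dim(X^k)\le k\cdot\dim(X)$'', and neither clause follows from the hypotheses: products of paracompact Hausdorff spaces need not be paracompact, and the product inequality for covering dimension is a theorem for metrizable spaces (and a few other special classes) but not for arbitrary paracompact Hausdorff spaces. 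Even if $X^k$ happened to be paracompact, the open subset $\mathrm{Conf}(X,k)\subset X^k$ need not inherit paracompactness, so Alexandrov's vanishing theorem for the quotient $Y$ is not available without further argument. These are exactly the kind of point-set obstacles that keep the statement a conjecture rather than a theorem.

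In short, what you have written is essentially a correct proof under an added hypothesis such as ``$X$ metrizable'' or ``$X$ a CW-complex'', and it would be honest to present it as such. Pushing it to the full paracompact setting would require either controlling the paracompactness and dimension of $\mathrm{Conf}(X,k)$ directly from the hypotheses on $X$ (not via $X^k$), or replacing the cohomological obstruction by an invariant that is robust in that generality.
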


Adicionalmente, planteamos el siguiente problema.

\begin{problem}
    Sean $B$ un espacio de Banach de dimensión infinita y $k\geq 1$. ¿El espacio de configuraciones $\text{Conf}(B,k)$ es contráctil? 
    
    El caso $k=1$, i.e., $\text{Conf}(B,1)=B$, es inmediato, ya que $B$ es un espacio vectorial topológico.   
\end{problem}

\subsection{Retracto y retracto por deformación}\label{sec:retract}
En esta sección presentamos el concepto y propiedades básicas de retracto y retracto por deformación. Proposición~\ref{proposition:retract-contractible-is-contractible} dice que todo retracto de un espacio contráctil es contráctil. En particular, tenemos que la esfera unitaria $S^{n-1}$ no es un retracto del disco unitario $D^n$ (Proposición~\ref{proposition:esfera-no-retracto}). Como consecuencia, se establece el famoso teorema del punto fijo de Brouwer (Proposición~\ref{proposition:brouwer-fixed-point-theorem}). Proposición~\ref{proposition:retract-extension} presenta una caracterización de retractos en términos de extensión de aplicaciones. Presentamos también el concepto de equivalencia homotópica (Definición~\ref{definition:homotopy-equivalence}). 

\begin{definition}
  Sea $A\subset X$.
  \begin{enumerate}
      \item Diremos que $A$ es un \textit{retracto}\index{Retracto} de $X$ si existe una aplicación continua $r:X\to A$ tal que $r\circ inlc_A=1_A$, donde $incl_A:A\hookrightarrow X$ es la aplicación inclusión, o sea, $r(a)=a$ para todo $a\in A$.  Tal aplicación $r$ es llamada una \textit{retracción}\index{Retracción} de $X$ en $A$. Además, se dice que $X$ \textit{se retrae} en $A$\index{Se retrae}.   
      \item Diremos que $A$ es un \textit{retracto por deformación}\index{Retracto por deformación} de $X$ si existe una aplicación continua $r:X\to A$ tal que $r\circ inlc_A=1_A$ y $incl_A\circ r\simeq 1_X$. Tal aplicación $r$ es llamada una \textit{retracción por deformación}\index{Retracción por deformación} de $X$ en $A$. Además, se dice que $X$ \textit{se retrae por deformación} en $A$\index{Se retrae por deformación}.  
  \end{enumerate} 
\end{definition}

\begin{example}\label{examples-retracts}
    \noindent\begin{enumerate}
        \item[(1)] Todo espacio topológico se retrae en cada uno de sus puntos.
        \item[(2)] Todo espacio topológico contráctil se retrae por deformación en cada uno de sus puntos. Más generalmente, note que si $X$ es contráctil y $A$ es un retracto de $X$ entonces $A$ es un retracto por deformación de $X$. 
        \item[(3)] $\mathbb{R}^{m+1}-\{0\}$ se retrae por deformación en la esfera $S^m$ mediante la retracción $r:\mathbb{R}^{m+1}-\{0\}\to S^m$, $r(x)=\dfrac{x}{\parallel x\parallel}$.
        \item[(4)] $\mathbb{R}^{m+1}-\{-1,1\}$ se retrae por deformación en el wedge de esferas $S^m\vee S^m$.
    \end{enumerate}
\end{example}

 El \textit{cilindro}\index{Cilindro} para un espacio topológico $X$ es dado por el producto cartesiano \[\text{Cil}(X)=X\times [0,1].\] 
 
\begin{example}\label{x-retract-cilindro}
    $X\times \{0\}$ es un retracto por deformación del cilindro $\text{Cil}(X)$. La retracción por deformación es dada por $r:\text{Cil}(X)\to X\times \{0\}$, $r(x,s)=(x,0)$, para cada $(x,s)\in \text{Cil}(X)$. La homotopía entre $i\circ r$ y la identidad $1_{\text{Cil}(X)}$ es dada por $H:\text{Cil}(X)\times [0,1]\to \text{Cil}(X)$, $H(x,s,t)=\left(x,ts\right)$.
\end{example}

Un resultado conocido es que la esfera unitaria $S^{n-1}$ no es un retracto del disco unitario $D^n=\{x\in\mathbb{R}^n:\mid x\|\leq 1\}$. Más generalmente, tenemos la siguiente afirmación. 

\begin{proposition}\label{proposition:retract-contractible-is-contractible}
 Sea $X$ un espacio contráctil y $A\subset X$. Si $A$ es un retracto de $X$ entonces $A$ es contráctil.   
\end{proposition}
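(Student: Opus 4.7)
The plan is to show that the identity map $1_A: A \to A$ is nulo homotópica by pulling back the nulo homotopía of $1_X$ through the retraction. The key observation is the identity $1_A = r \circ 1_X \circ incl_A$, which holds because $r \circ incl_A = 1_A$ and $1_X$ is the identity.

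First, since $X$ is contráctil, I would invoke Definición~\ref{definition:contractil-space} to fix a point $x_0 \in X$ and a homotopía $H: X \times [0,1] \to X$ from $1_X$ to the constant map $\overline{x_0}$; equivalently, $1_X \simeq \overline{x_0}$. Next, let $r: X \to A$ be the retracción and $incl_A: A \hookrightarrow X$ the inclusión, so $r \circ incl_A = 1_A$. Writing $1_A = r \circ 1_X \circ incl_A$ and applying Corolario~\ref{corollary:homo-igual} (or directly Proposición~\ref{proposition:homotopia-compuesta}) to compose the homotopía $1_X \simeq \overline{x_0}$ with $r$ on the left and $incl_A$ on the right, I obtain
\[
1_A \;=\; r \circ 1_X \circ incl_A \;\simeq\; r \circ \overline{x_0} \circ incl_A.
\]
The composite $r \circ \overline{x_0} \circ incl_A$ is just the constant map $\overline{r(x_0)}: A \to A$ taking every $a \in A$ to $r(x_0) \in A$. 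Therefore $1_A \simeq \overline{r(x_0)}$, which by Definición~\ref{definition:contractil-space} means $A$ is contráctil.

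Alternatively, one can write the homotopía $G: A \times [0,1] \to A$ explicitly as $G(a,t) = r(H(incl_A(a), t)) = r(H(a,t))$, noting that $G_0(a) = r(H(a,0)) = r(a) = a$ and $G_1(a) = r(H(a,1)) = r(x_0)$, which is constant in $a$. This makes the argument concrete without relying on the abstract corollaries.

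No step here presents a real obstacle; the proof is essentially a one-line composition, and the only subtle point to get right is that the identity $1_A = r \circ 1_X \circ incl_A$ exhibits the identity of $A$ as a composite through $X$, which is precisely what allows the contractibilidad of $X$ to be transported to $A$ via the retracción.
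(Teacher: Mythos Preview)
Your proof is correct and follows essentially the same approach as the paper: both exploit the factorization $1_A = r \circ incl_A$ and transport the null-homotopy of $1_X$ through the retraction. The only cosmetic difference is that the paper invokes the packaged Proposici\'on~\ref{proposition:o-contractil-nulo} (to say $incl_A$ is nulo homot\'opica) and Proposici\'on~\ref{proposition:o-composta} (to conclude $1_A = r\circ incl_A$ is nulo homot\'opica), whereas you unwind those propositions and apply Corolario~\ref{corollary:homo-igual} directly to $1_X \simeq \overline{x_0}$; your explicit homotopy $G(a,t)=r(H(a,t))$ is a nice concrete bonus.
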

\begin{proof}
    Sean $r:X\to A$ una retracción. Note que $1_A=r\circ incl_A$, donde $incl_A:A\hookrightarrow X$. Como $X$ es contráctil entonces la inclusión es nulo homotópica ($r$ también es nulo homotópica), ver Proposición~\ref{proposition:o-contractil-nulo}. Así la identidad $1_A$ es nulo homotópica (ver Proposición~\ref{proposition:o-composta}) y $A$ es contráctil.
\end{proof}

\begin{corollary}\label{corollary:nocontractil-noretracto}
 Sea $X$ un espacio contráctil y $A\subset X$. Si $A$ no es contráctil entonces $A$ no es un retracto de $X$.     
\end{corollary}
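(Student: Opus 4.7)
El plan es observar que este corolario es simplemente el contrarecíproco lógico de la Proposición~\ref{proposition:retract-contractible-is-contractible}, por lo que no se requiere ninguna construcción nueva; basta con reformular cuidadosamente la implicación ya demostrada.

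Primero recordaría la Proposición~\ref{proposition:retract-contractible-is-contractible}, que bajo la hipótesis fija \emph{$X$ contráctil} afirma la implicación \[(A\text{ es retracto de }X)\Longrightarrow (A\text{ es contráctil}).\] La hipótesis ambiental (que $X$ sea contráctil) se conserva sin cambios en el enunciado del corolario, de modo que el único paso lógico consiste en aplicar el contrarecíproco a la implicación interna. Esto produce exactamente \[(A\text{ no es contráctil})\Longrightarrow (A\text{ no es retracto de }X),\] que es lo afirmado.

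Escribiría la demostración en una única línea: supongamos que $A\subset X$ no es contráctil; si $A$ fuese retracto de $X$, entonces por la Proposición~\ref{proposition:retract-contractible-is-contractible} (dado que $X$ es contráctil) se tendría que $A$ es contráctil, contradicción; luego $A$ no es retracto de $X$.

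El único \emph{obstáculo} imaginable sería confundir el papel de las dos hipótesis y pasar $X$ contráctil al otro lado de la implicación, lo que daría un enunciado falso; por eso conviene enfatizar que la contrarrecíproca se toma únicamente respecto de las proposiciones \textbf{$A$ es retracto} y \textbf{$A$ es contráctil}, manteniendo intacta la hipótesis sobre $X$. Más allá de esta observación, no hay dificultad técnica alguna.
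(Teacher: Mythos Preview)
Tu propuesta es correcta y coincide exactamente con el tratamiento del artículo: el corolario se enuncia inmediatamente después de la Proposición~\ref{proposition:retract-contractible-is-contractible} sin demostración explícita, pues es simplemente su contrarrecíproco. No hay nada que añadir ni corregir.
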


Como $D^n$ es contráctil y $S^{n-1}$ no es contráctil, Corolario~\ref{corollary:nocontractil-noretracto} implica la siguiente afirmación.

\begin{proposition}\label{proposition:esfera-no-retracto}
  La esfera unitaria $S^{n-1}$ no es un retracto del disco unitario $D^n$.   
\end{proposition}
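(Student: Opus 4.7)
El plan es aplicar directamente el Corolario~\ref{corollary:nocontractil-noretracto} ya establecido, especializado al caso $X=D^n$ y $A=S^{n-1}$. Para ello, basta verificar dos hechos: (i) que el disco $D^n$ es contráctil, y (ii) que la esfera $S^{n-1}$ no es contráctil. Con ambos, la proposición se seguirá de un solo renglón.

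Primero abordaría (i). La contractibilidad de $D^n$ se obtiene mediante la homotopía lineal $H:D^n\times [0,1]\to D^n$ definida por $H(x,t)=(1-t)x$. Esta aplicación está bien definida, pues $\|(1-t)x\|=(1-t)\|x\|\leq \|x\|\leq 1$; es continua por la estructura de $\mathbb{R}$-espacio vectorial topológico de $\mathbb{R}^n$ (compárese con el Ítem (1) del Ejemplo~\ref{first-example}); y cumple $H_0=1_{D^n}$ y $H_1=\overline{0}$. Luego la identidad $1_{D^n}$ es nulo homotópica y $D^n$ es contráctil.

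El paso (ii), la no contractibilidad de $S^{n-1}$, es el principal obstáculo. Con las nociones estrictamente desarrolladas hasta aquí (homotopía puntual, contractibilidad, retractos), este hecho no admite una demostración puramente elemental para $n$ arbitrario: el argumento clásico requiere invariantes algebraicos, por ejemplo $H_{n-1}(S^{n-1};\mathbb{Z})\cong\mathbb{Z}$ o $\pi_{n-1}(S^{n-1})\cong\mathbb{Z}$, cantidades que se anularían si $S^{n-1}$ fuese contráctil (ya que todo espacio contráctil tiene el tipo de homotopía de un punto). Asumiría este hecho clásico, remitiendo al lector a una referencia estándar, por ejemplo \cite{hatcher2002}. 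Alternativamente, para el caso particular $n=1$ puede observarse directamente que $S^0=\{-1,1\}$ es disconexo y por tanto no conexo por caminos, lo que impide ser contráctil por la observación que sigue al Ejemplo del cono.

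Finalmente, combinando (i) y (ii), el Corolario~\ref{corollary:nocontractil-noretracto} con $X=D^n$ y $A=S^{n-1}$ concluye que $S^{n-1}$ no es un retracto de $D^n$: de serlo, por la Proposición~\ref{proposition:retract-contractible-is-contractible} $S^{n-1}$ heredaría la contractibilidad de $D^n$, contradiciendo (ii). Esta es exactamente la estructura de prueba esperada, en la que toda la dificultad se concentra en el paso (ii).
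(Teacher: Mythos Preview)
Tu propuesta es correcta y sigue exactamente el mismo esquema que el artículo: éste se limita a afirmar que $D^n$ es contráctil y $S^{n-1}$ no lo es, e invoca el Corolario~\ref{corollary:nocontractil-noretracto}. De hecho, tu versión es algo más detallada al justificar explícitamente la contractibilidad de $D^n$ y al señalar honestamente que la no contractibilidad de $S^{n-1}$ requiere invariantes algebraicos externos al desarrollo del texto.
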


Proposición~\ref{proposition:esfera-no-retracto} implica el famoso teorema del punto fijo de Brouwer. Se dice que una aplicación continua $f:X\to X$ tiene \textit{punto fijo}\index{Punto fijo} cuando existe $x\in X$ tal que $f(x)=x$. Caso contrario, diremos que $f:X\to X$ \textit{no tiene punto fijo}. 

\begin{proposition}[Teorema del punto fijo de Brouwer]\label{proposition:brouwer-fixed-point-theorem}
    Si $f:D^n\to D^n$ es una aplicación continua entonces $f$ tiene punto fijo.   
\end{proposition}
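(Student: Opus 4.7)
La estrategia natural es proceder por contradicción usando la Proposición~\ref{proposition:esfera-no-retracto}, que afirma que $S^{n-1}$ no es un retracto de $D^n$. Así que supondría que existe una aplicación continua $f:D^n\to D^n$ sin punto fijo, es decir, $f(x)\neq x$ para todo $x\in D^n$, y construiría a partir de ella una retracción continua $r:D^n\to S^{n-1}$, obteniendo la contradicción deseada.

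La construcción geométrica clave es la siguiente: para cada $x\in D^n$, como $f(x)\neq x$, el rayo que parte de $f(x)$ y pasa por $x$ está bien definido; defino $r(x)$ como el único punto en el que dicho rayo corta a la esfera $S^{n-1}$ (tomando la intersección en el \emph{sentido} que sale de $x$, no la que podría estar entre $f(x)$ y $x$). Formalmente, busco el único $t=t(x)\geq 0$ tal que $\|x+t(x-f(x))\|=1$ y defino $r(x)=x+t(x)(x-f(x))$. Observo que si $x\in S^{n-1}$, entonces $t(x)=0$ cumple la condición, y como $t$ se elige como la raíz no negativa adecuada de una ecuación cuadrática, resulta $r(x)=x$ sobre $S^{n-1}$.

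El paso principal, y donde concentraría el trabajo, es verificar la continuidad de $r$. La condición $\|x+t(x-f(x))\|^2=1$ expande a la ecuación cuadrática en $t$
\begin{equation*}
\|x-f(x)\|^2\, t^2 + 2\langle x,\, x-f(x)\rangle\, t + (\|x\|^2-1)=0.
\end{equation*}
Como $f$ no tiene punto fijo, el coeficiente $\|x-f(x)\|^2$ es estrictamente positivo sobre el compacto $D^n$; el discriminante es no negativo (hay solución real pues el rayo sale de $D^n$) y, además, el término independiente $\|x\|^2-1\leq 0$ garantiza que la raíz con signo $+$ es no negativa. Así $t(x)$ se expresa explícitamente mediante las operaciones continuas (suma, producto, raíz cuadrada de cantidades no negativas, cociente por cantidades no nulas), de modo que $t:D^n\to[0,\infty)$ es continua y por ende $r$ también lo es.

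Concluido esto, $r:D^n\to S^{n-1}$ es una aplicación continua que satisface $r\circ incl_{S^{n-1}}=1_{S^{n-1}}$, contradiciendo la Proposición~\ref{proposition:esfera-no-retracto}. El obstáculo principal es justamente justificar con cuidado la elección del signo de la raíz y la positividad del coeficiente cuadrático (es decir, explotar la hipótesis $f(x)\neq x$ en el lugar correcto); una vez elegido el signo correcto, la continuidad y la propiedad de retracción salen directamente de las fórmulas.
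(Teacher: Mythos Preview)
Your proposal is correct and follows exactly the same route as the paper: assume $f$ has no fixed point, define $r(x)$ as the intersection with $S^{n-1}$ of the ray from $f(x)$ through $x$, and contradict Proposición~\ref{proposition:esfera-no-retracto}. The paper simply asserts the continuity of $r$ without justification, whereas you supply the explicit quadratic computation for $t(x)$; this extra detail is fine and fills in what the paper omits.
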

\begin{proof}
    Por contradicción. Supongamos que $f(x)\neq x$ para todo $x\in X$. Sea $r:D^n\to S^{n-1}$ dada por $r(x)=$punto de intersección del rayo $\overrightarrow{f(x)x}$ con la esfera $S^{n-1}$, donde $\overrightarrow{f(x)x}$ es el rayo saliendo de $f(x)$ y pasando por $x$. Note que $r$ es continua y $r(x)=x$ para todo $x\in S^{n-1}$. Así, $r$ es una retracción de $D^n$ en $S^{n-1}$, lo cual es una contradicción con la Proposición~\ref{proposition:esfera-no-retracto}.  
\end{proof}

El siguiente ejemplo muestra una aplicación continua de $\mathbb{R}^n$ en $\mathbb{R}^n$ sin punto fijo, para cada $n\geq 1$. 

\begin{example}\label{rn-sin-punto-fijo}
  Para $n\geq 1$, la translación $f:\mathbb{R}^n\to \mathbb{R}^n,~f(x)=x+a$, con $a\in\mathbb{R}^n, a\neq 0$, no tiene punto fijo.  
\end{example}

El siguiente ejemplo muestra una aplicación continua de $(-1,1)$ en $(-1,1)$ sin punto fijo. 

\begin{example}
    La aplicación continua $f:(-1,1)\to (-1,1),~f(s)=\dfrac{s}{2}+\dfrac{1}{2}$, no tiene punto fijo.  
\end{example}

En general, se puede verificar que para cada $n\geq 1$, existe una aplicación continua $f:B^n\to B^n$ sin punto fijo, donde $B^n=\{x\in\mathbb{R}^n:~\mid x\mid<1\}$ es la bola abierta de $\mathbb{R}^n$.

\medskip Sea $f:X\to Y$ una aplicación continua. Una \textit{sección}\index{Sección} de $f$ es una inversa a derecha de $f$, o sea, una aplicación $s:Y\to X$ tal que $f\circ s=1_Y$. Más generalmente, dado un subespacio $A\subset Y$, una \textit{sección local}\index{Sección local} de $f$ sobre $A$ es una aplicación $s:A\to X$ tal que $f\circ s=incl_A$.

\medskip Sean $X,Y$ espacios topológicos. El conjunto \[\text{Map}(X,Y)=\{f:X\to Y\mid~ f\text{ es continua}\}\] de las aplicaciones continuas es dotado de la \textit{topología compacto-abierta}\index{Topología compacto-abierta} que tiene  como subbase los conjuntos de la forma $U^K=\{f\in \text{Map}(X,Y):~f(K)\subset U\}$, donde $K\subset X$ es compacto y $U$ es un conjunto abierto en $Y$ (ver \cite[Definition 1.2.1, pg. 2]{aguilar2002}).

\begin{remark}\label{remark:contractible-without-fixed-point}
 Para un espacio topológico $X$.
 \begin{enumerate}
     \item[(1)] Existe $\varphi:X\to \text{Conf}(X,2)$ una sección continua de la proyección $\pi:\text{Conf}(X,2)\to X,~\pi(x,y)=x$, si, y solamente si, existe una aplicación continua $f:X\to X$ sin punto fijo.
     \item[(2)] De \cite[Proposition 4.2, pg. 571]{zapata2020} se tiene que, si existe $s:\text{Conf}(X,2)\times X\to\text{Map}([0,1],\text{Conf}(X,2))$ una sección continua de la aplicación $e_{\pi}:\text{Map}([0,1],\text{Conf}(X,2))\to \text{Conf}(X,2)\times X,~e_\pi(\gamma)=\left(\gamma(0),\pi(\gamma(1))\right)$, entonces $X$ es contráctil y existe una aplicación continua $f:X\to X$ sin punto fijo. Además, si la proyección $\pi:\text{Conf}(X,2)\to X,~\pi(x,y)=x$ es una fibración, se tiene que el regreso también vale (ver \cite[Corollary 4.8, pg. 574]{zapata2020}).  
 \end{enumerate}  
\end{remark}

Debido al Ítem (2) de la Observación~\ref{remark:contractible-without-fixed-point} planteamos el siguiente problema.

\begin{problem}
  Sea $X$ un espacio contráctil tal que existe una aplicación continua $f:X\to X$ sin punto fijo. ¿Existe $s:\text{Conf}(X,2)\times X\to\text{Map}([0,1],\text{Conf}(X,2))$ sección continua de la aplicación $e_{\pi}:\text{Map}([0,1],\text{Conf}(X,2))\to \text{Conf}(X,2)\times X,~e_\pi(\gamma)=\left(\gamma(0),\pi(\gamma(1))\right)$?  
\end{problem}

\medskip Sean $X$, $Y$ espacios topológicos, $A\subset X$ y $g:A\to Y$ una aplicación continua. Una \textit{extensión}\index{Extensión} de $g$ sobre $X$ es una aplicación continua $f:X\to Y$ tal que $f(a)=g(a)$, para todo $a\in A$. De \cite[Proposition 7.1, pg. 10]{hu1959} tenemos el siguiente resultado. 

\begin{proposition}\label{proposition:retract-extension}
 Sea $X$ un espacio topológico y $A\subset X$. $A$ es un retracto de $X$ si, y solamente si, para cualquier espacio topológico $Y$, cada aplicación continua $g:A\to Y$  tiene una extensión sobre $X$.  
\end{proposition}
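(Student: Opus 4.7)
El plan es demostrar la equivalencia separando las dos implicaciones, y en cada dirección aprovechar que la condición de ser retracto o de admitir extensión es exactamente una ecuación de restricción: $r\circ incl_A=1_A$ y $f\circ incl_A=g$, respectivamente. Esto sugiere que ambas implicaciones son esencialmente \emph{un solo paso} si se elige correctamente la aplicación con la cual se aplica la hipótesis.

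Para la implicación directa, supondré que $A$ es un retracto de $X$ y fijaré una retracción $r:X\to A$. Dada cualquier aplicación continua $g:A\to Y$, propondré como extensión la composición $f:=g\circ r:X\to Y$. Entonces $f$ es continua por ser composición de continuas, y para todo $a\in A$ se tiene $f(a)=g(r(a))=g(a)$, usando que $r\circ incl_A=1_A$. Esto demuestra que $f$ extiende $g$ sobre $X$.

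Para la implicación recíproca, la idea clave es aplicar la hipótesis al caso ``universal'' $Y=A$ y $g=1_A:A\to A$. Por la hipótesis, existe una extensión continua $r:X\to A$ de $1_A$, es decir, $r(a)=a$ para todo $a\in A$. Esto es precisamente la condición $r\circ incl_A=1_A$, de modo que $r$ es una retracción y $A$ es un retracto de $X$.

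No espero obstáculos serios: ambas direcciones se reducen a manipulaciones inmediatas de composiciones y restricciones, y la única sutileza pedagógica es elegir el espacio $Y=A$ y la aplicación $g=1_A$ en el recíproco, que es el truco estándar para convertir una propiedad de extensión en la existencia de una retracción. No hacen falta resultados previos del texto más allá de las definiciones de retracto y de extensión.
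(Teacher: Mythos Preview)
Your proof is correct and matches the paper's argument essentially verbatim: both directions use $g\circ r$ as the extension in the forward implication and specialize to $Y=A$, $g=1_A$ in the converse to obtain the retraction.
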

\begin{proof}
  Veamos la ida. Sea $r:X\to A$ una retracción. Tenemos que $g\circ r:X\to Y$ es una extensión de $g$ sobre $X$, ya que, para $a\in A$, tenemos $(g\circ r)(a)=g(r(a))=g(a)$. Ahora, veamos el regreso. Para $g=1_A$, por hipótesis tenemos que, existe $r:X\to A$ extensión de $1_A$ sobre $X$. Note que, $r$ es una retracción. 
\end{proof}

\begin{definition}[Equivalencia homotópica]\label{definition:homotopy-equivalence}
Sean $X$, $Y$ espacios topológicos. \begin{enumerate}
    \item Se dice que $X$ \textit{domina}\index{Domina} a $Y$ si existen aplicaciones continuas $f:X\to Y$ y $g:Y\to X$ tales que $f\circ g\simeq 1_Y$.
    \item Una aplicación continua $f:X\to Y$ es llamada una \textit{equivalencia homotópica}\index{Equivalencia homotópica} si existe una aplicación continua $g:Y\to X$ tal que $g\circ f\simeq 1_X$ y $f\circ g\simeq 1_Y$. Tal $g$ es llamada \textit{inversa homotópica}\index{Inversa homotópica} de $f$.
    \item Se dice que $X$ es \textit{equivalente homotópico}\index{Equivalente homotópico} a $Y$, denotado por $X\simeq Y$, cuando existe una equivalencia homotópica $f:X\to Y$.  
\end{enumerate}
\end{definition}

Note que la relación \aspas{$\simeq$} es una relación de equivalencia. Así, cuando $X\simeq Y$ diremos que $X$ e $Y$ tienen el \textit{mismo tipo de homotopía}\index{Mismo tipo de homotopía}. 

\begin{example}
    \noindent\begin{enumerate}
        \item Todo espacio topológico domina a sus retractos.
        \item Si $A$ es un retracto por deformación de $X$ entonces $A$ y $X$ tienen el mismo tipo de homotopía, i.e., $A\simeq X$. 
    \end{enumerate}
\end{example}

\begin{example}
   Sea $X$ un espacio topológico. \begin{enumerate}
       \item  $X$ es contráctil si, y solamente si, $X\simeq \{\ast\}$.
       \item $\text{C}(X)\simeq \{\ast\}$.
       \item $\text{Cil}(X)\simeq X$.
   \end{enumerate} 
\end{example}

\section{Categoría de una aplicación}\label{chap:categoria-aplicacion}
En este capítulo daremos una revisión de categoría de una aplicación. En la Sección~\ref{sec:definiciones-cat} presentamos la definición y propiedades básicas de la categoría de una aplicación. En la Sección~\ref{sec:categoriaLS} recordamos la definición y propiedades básicas de categoría LS. 

\subsection{Definiciones básicas}\label{sec:definiciones-cat}
En esta sección presentamos la definición y propiedades básicas de la categoría de una aplicación. Una propiedad es que la categoría de una aplicación es un invariante homotópico (Proposición~\ref{proposition:cat-aplicacao-invariante-homotopico}). La Proposición~\ref{proposition:cat-composta} muestra el comportamiento de la categoría con respecto a la composición. Además, Proposición~\ref{proposition:cat-composta-equivalencia-homotopica} dice que si componemos a una aplicación con una equivalencia homotópica su categoría no se altera. Se cumple que la categoría de una aplicación se puede realizar como la categoría de una fibración (ver Proposición~\ref{proposition:cat-map-cat-fibration}).

De \cite[Definition 1.1, pg. 265]{berstein1962} tenemos la siguiente definición.

\begin{definition}
    Sea $f:X\to Y$ una aplicación continua. La \textit{categoría}\index{Categoría} de $f$, denotada por $\mathrm{cat}(f)$, es el menor entero positivo $n$ tal que existen $U_1,\ldots,U_n$ abiertos de $X$ tales que $X=U_1\cup\cdots\cup U_n$ y cada aplicación restricción $f_{| U_i}:U_i\to Y$ es nulo homotópica. En caso que no exista tal entero $n$, escribiremos $\mathrm{cat}(f)=\infty$.
\end{definition}

Note que $\mathrm{cat}(f)=1$ si y solamente si $f$ es nulo homotópica.

\begin{example}
\noindent\begin{enumerate}
    \item[(1)] La categoría de cualquier aplicación constante $\overline{y_0}:X\to Y,~\overline{y_0}(x)=y_0$, es igual a $1$.
    \item[(2)]  Sea $f:X\to Y$ una aplicación continua. Si $X$ o $Y$ es contráctil, entonces, por la Proposición~\ref{proposition:o-contractil-nulo}, tenemos que $\text{cat}(f)=1$. 
\end{enumerate} 
\end{example}

Tenemos que la categoría $\mathrm{cat}(-)$ es un invariante homotópico.

\begin{proposition}\label{proposition:cat-aplicacao-invariante-homotopico}
 Sean $f,g:X\to Y$ aplicaciones continuas. Si $f\simeq g$ entonces $\mathrm{cat}(f)=\mathrm{cat}(g)$.   
\end{proposition}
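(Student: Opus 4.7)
The plan is to prove $\mathrm{cat}(f) = \mathrm{cat}(g)$ by showing the inequality $\mathrm{cat}(f) \leq \mathrm{cat}(g)$; the reverse inequality follows by symmetry, since $f \simeq g$ implies $g \simeq f$ by Proposición~\ref{proposition:relacion-homotopia}. If $\mathrm{cat}(g) = \infty$ there is nothing to prove, so I assume $\mathrm{cat}(g) = n < \infty$ and exhibit a covering of $X$ by $n$ open sets on which $f$ restricts to null-homotopic maps.

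First I would take $U_1, \ldots, U_n$ open in $X$ with $X = U_1 \cup \cdots \cup U_n$ such that each $g|_{U_i} : U_i \to Y$ is null-homotópica, which exists by definition of $\mathrm{cat}(g) = n$. The same family $\{U_i\}$ will serve for $f$; the task is to verify that each $f|_{U_i}$ is null-homotópica. The key observation is that a homotopy between $f$ and $g$ restricts to a homotopy between their restrictions to any subset.

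More precisely, let $H : X \times [0,1] \to Y$ be a homotopy from $f$ to $g$, which exists because $f \simeq g$. For each $i \in \{1, \ldots, n\}$, define $H_i : U_i \times [0,1] \to Y$ as the restriction of $H$ to $U_i \times [0,1]$. Then $H_i$ is continuous (restriction of a continuous map), and $H_i(x,0) = f(x) = f|_{U_i}(x)$, $H_i(x,1) = g(x) = g|_{U_i}(x)$ for all $x \in U_i$. Hence $f|_{U_i} \simeq g|_{U_i}$. Since $g|_{U_i}$ is null-homotópica by hypothesis, transitividad of the homotopy relation (Proposición~\ref{proposition:relacion-homotopia}) gives $f|_{U_i} \simeq \overline{y_i}$ for some $y_i \in Y$, so $f|_{U_i}$ is null-homotópica as required.

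Therefore the cover $\{U_1, \ldots, U_n\}$ witnesses $\mathrm{cat}(f) \leq n = \mathrm{cat}(g)$. Applying the same argument with the roles of $f$ and $g$ swapped yields $\mathrm{cat}(g) \leq \mathrm{cat}(f)$, and hence equality. There is no real obstacle here; the only mild subtlety is handling the case $\mathrm{cat}(g) = \infty$ (treated trivially) and observing that restriction preserves continuity and homotopies, which is immediate.
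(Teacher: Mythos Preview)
Your proof is correct and follows essentially the same approach as the paper: both take a categorical cover for $g$ and show each $f|_{U_i}\simeq g|_{U_i}$ so that $f|_{U_i}$ is null-homotopic. The only cosmetic difference is that the paper phrases the step $f|_{U_i}\simeq g|_{U_i}$ via Corolario~\ref{corollary:homo-igual} (writing $f|_{U_i}=f\circ incl_{U_i}\simeq g\circ incl_{U_i}=g|_{U_i}$), whereas you obtain it directly by restricting the homotopy $H$ to $U_i\times[0,1]$; these are the same argument.
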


\begin{proof}
 Veamos que $\mathrm{cat}(f)\leq\mathrm{cat}(g)$. Supongamos que $\mathrm{cat}(g)=n$ y consideremos $U_1,\ldots,U_n$ abiertos de $X$ tales que $X=U_1\cup\cdots\cup U_n$ y cada aplicación restricción $g_{| U_i}:U_i\to Y$ es nulo homotópica. Note que $g_{| U_i}=g\circ incl_{U_i}$ para cada $i=1,\ldots,n$, donde $incl_{U_i}:U_i\hookrightarrow X$ es la aplicación inclusión. Luego, \begin{align*}
     f_{| U_i}&= f\circ incl_{U_i}\\
     &\simeq g\circ incl_{U_i} & \mbox{ (sigue del Corolario~\ref{corollary:homo-igual})}\\
     &=g_{| U_i},
 \end{align*} así $f_{| U_i}$ es nulo homotópica, ya que $g_{| U_i}$ es nulo homotópica. Luego, $\mathrm{cat}(f)\leq n=\mathrm{cat}(g)$. Análogamente, se tiene que $\mathrm{cat}(g)\leq\mathrm{cat}(f)$. Por lo tanto, $\mathrm{cat}(f)=\mathrm{cat}(g)$.
\end{proof}

El regreso de la Proposición~\ref{proposition:cat-aplicacao-invariante-homotopico} no vale. Así, es natural plantear el siguiente problema.

\begin{problem}
  Sean $f,g:X\to Y$ aplicaciones continuas. ¿Existen condiciones para $f,g$ tales que el regreso de la Proposición~\ref{proposition:cat-aplicacao-invariante-homotopico} sea verdad, o sea, si $\mathrm{cat}(f)=\mathrm{cat}(g)$ implica que $f\simeq g$?  
\end{problem}

Ahora veamos el comportamiento de $\mathrm{cat}(-)$ con respecto a la composición. 

\begin{proposition}\label{proposition:cat-composta}
 Sean $f:X\to Y$ y $g:Y\to Z$ aplicaciones continuas. Tenemos \[\mathrm{cat}(g\circ f)\leq\min\{\mathrm{cat}(f),\mathrm{cat}(g)\}.\]   
\end{proposition}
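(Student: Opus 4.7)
The plan is to establish the two inequalities $\mathrm{cat}(g\circ f)\leq\mathrm{cat}(f)$ and $\mathrm{cat}(g\circ f)\leq\mathrm{cat}(g)$ separately, in both cases by exhibiting a suitable open cover of $X$ on which the restrictions of $g\circ f$ are null-homotopic. The key tool in both arguments is Proposición~\ref{proposition:o-composta}, which says that composing any continuous map with a null-homotopic one yields a null-homotopic map.

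For the bound $\mathrm{cat}(g\circ f)\leq\mathrm{cat}(f)$, I would set $n=\mathrm{cat}(f)$ and take an open cover $X=U_1\cup\cdots\cup U_n$ such that each restriction $f_{|U_i}:U_i\to Y$ is null-homotopic. Since $(g\circ f)_{|U_i}=g\circ f_{|U_i}$, Proposición~\ref{proposition:o-composta} immediately gives that $(g\circ f)_{|U_i}$ is null-homotopic for every $i$. Therefore the same cover $\{U_1,\dots,U_n\}$ witnesses $\mathrm{cat}(g\circ f)\leq n$.

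For the bound $\mathrm{cat}(g\circ f)\leq\mathrm{cat}(g)$, I would set $m=\mathrm{cat}(g)$ and take an open cover $Y=V_1\cup\cdots\cup V_m$ such that each $g_{|V_j}:V_j\to Z$ is null-homotopic. Define $W_j=f^{-1}(V_j)\subset X$; by continuity of $f$ each $W_j$ is open, and since the $V_j$ cover $Y$ the $W_j$ cover $X$. Writing $f|^{V_j}_{W_j}:W_j\to V_j$ for $f$ restricted and corestricted, we have $(g\circ f)_{|W_j}=g_{|V_j}\circ f|^{V_j}_{W_j}$; another application of Proposición~\ref{proposition:o-composta} shows each of these is null-homotopic, giving $\mathrm{cat}(g\circ f)\leq m$.

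There is no real obstacle here: both arguments are essentially bookkeeping on open covers, together with the basic fact that null-homotopy is preserved by pre- and post-composition. The only mildly delicate point is being careful in the second half that $f^{-1}(V_j)$ genuinely is an open cover of $X$ (which uses only that $\{V_j\}$ covers $Y$ and that $f$ is continuous) and that the restriction/corestriction preserves the factorization of $g\circ f$. Combining the two inequalities yields $\mathrm{cat}(g\circ f)\leq\min\{\mathrm{cat}(f),\mathrm{cat}(g)\}$, as desired.
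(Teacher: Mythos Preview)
Your proposal is correct and follows essentially the same approach as the paper's own proof: both inequalities are obtained by producing open covers of $X$ (the original one for $\mathrm{cat}(f)$, and the preimages $f^{-1}(V_j)$ for $\mathrm{cat}(g)$) and then invoking Proposici\'on~\ref{proposition:o-composta} to conclude that the restrictions of $g\circ f$ are null-homotopic. The only cosmetic difference is notation for the covers and the restricted/corestricted map.
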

\begin{proof}
  Veamos que $\mathrm{cat}(g\circ f)\leq \mathrm{cat}(f)$. Supongamos que $\mathrm{cat}(f)=n$ y consideremos $U_1,\ldots,U_n$ abiertos de $X$ tales que $X=U_1\cup\cdots\cup U_n$ y cada aplicación restricción $f_{| U_i}:U_i\to Y$ es nulo homotópica. Note que \begin{align*}
      (g\circ f)_{| U_i}&=(g\circ f)\circ incl_{U_i}\\
      &=g\circ (f\circ incl_{U_i})\\
      &=g\circ f_{| U_i},
  \end{align*} luego, aplicando la Proposición~\ref{proposition:o-composta}, obtenemos que $ (g\circ f)_{| U_i}$ es nulo homotópica, ya que $f_{| U_i}$ es nulo homotópica. Así, $\mathrm{cat}(g\circ f)\leq n=\mathrm{cat}(f)$.

  Ahora, veamos que $\mathrm{cat}(g\circ f)\leq \mathrm{cat}(g)$. Supongamos que $\mathrm{cat}(g)=n$ y consideremos $V_1,\ldots,V_n$ abiertos de $Y$ tales que $Y=V_1\cup\cdots\cup V_n$ y cada aplicación restricción $g_{| V_i}:V_i\to Z$ es nulo homotópica. Para cada $i=1,\ldots,n$, considere $U_i=f^{-1}(V_i)$. Note que, cada $V_i$ es abierto de $X$ y $X=U_1\cup\cdots\cup U_n$. Además, \begin{align*}
     (g\circ f)_{| U_i}&=(g\circ f)\circ incl_{U_i}\\
     &=g\circ (f\circ incl_{U_i})\\
     &=g\circ\left(incl_{V_i}\circ f_{|}\right)\\
     &=\left(g\circ incl_{V_i}\right)\circ f_{|}\\
     &=g_{| V_i}\circ f_{|},
  \end{align*} donde $f_{|}:U_i\to V_i$ está dada por $f_{|}(x)=f(x)$ para todo $x\in U_i$. Luego, nuevamente aplicando la Proposición~\ref{proposition:o-composta}, obtenemos que $ (g\circ f)_{| U_i}$ es nulo homotópica, ya que $g_{| V_i}$ es nulo homotópica. Así, $\mathrm{cat}(g\circ f)\leq n=\mathrm{cat}(g)$.

  Por lo tanto, $\mathrm{cat}(g\circ f)\leq\min\{\mathrm{cat}(f),\mathrm{cat}(g)\}$.   
\end{proof}

Por la demostración de la Proposición~\ref{proposition:cat-composta} tenemos que la Proposición~\ref{proposition:o-composta} implica la Proposición~\ref{proposition:cat-composta}. La vuelta también vale. 

\medskip El siguiente ejemplo muestra que la desigualdad en la Proposición~\ref{proposition:cat-composta} puede ser estricta.

\begin{example}
    Considere el wedge $S^n\vee S^n=S^n\times\{1\}\cup\{1\}\times S^n$ junto con las aplicaciones $g:S^n\vee S^n\to S^n\vee S^n$ y $f:S^n\vee S^n\to S^n\vee S^n$ dadas por \begin{align*}
        f(x,y)=\begin{cases}
        (1,1),&\mbox{si $y=1$;}\\
        (1,y),&\mbox{si $x=1$.}\\
    \end{cases}\\
    g(x,y)=\begin{cases}
        (x,1),&\mbox{si $y=1$;}\\
        (1,1),&\mbox{si $x=1$.}\\
    \end{cases}
    \end{align*} Note que $f$ y $g$ no son nulo homotópicas (ver Ejercicio~\ref{exercise:no-nulo-composta-nulo}). Por otro lado $f\circ g=\overline{(1,1)}$. Así, $\mathrm{cat}(f\circ g)=1$ y $\min\{\mathrm{cat}(f),\mathrm{cat}(g)\}\geq 2$.
\end{example}

\medskip Además, la Proposición~\ref{proposition:cat-composta} implica una secuencia de números naturales limitada y decreciente $\left(\mathrm{cat}(f^{n})\right)_{n\geq 1}$, donde $f^n$ es la $n$-th iterada, $f^n=f\circ\cdots\circ f$, de una autoaplicación $f:X\to X$.

\begin{corollary}\label{corollary:cat-iterada}
    Sea $f: X\to X$ una aplicación continua. Para cualquier $n\geq 1$, tenemos \[1\leq\cdots\leq\mathrm{cat}(f^{n+1})\leq\mathrm{cat}(f^{n})\leq\cdots\leq\mathrm{cat}(f^2)\leq\mathrm{cat}(f).\]
\end{corollary}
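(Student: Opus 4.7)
The plan is to derive the chain of inequalities directly from Proposición~\ref{proposition:cat-composta} by a short induction on $n$, using the fact that the iterate $f^{n+1}$ can be written as the composition $f\circ f^{n}$. The lower bound $1\leq \mathrm{cat}(f^n)$ is immediate from the definition, since $\mathrm{cat}(-)$ is defined as the least \emph{positive} integer satisfying the covering condition (and in particular it is at least $1$ for every continuous map, attaining the value $1$ precisely for nulo homotópica maps).

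First I would observe that for each $n\geq 1$ we have the identity $f^{n+1}=f\circ f^{n}$. Applying Proposición~\ref{proposition:cat-composta} with the continuous maps $f^{n}:X\to X$ and $f:X\to X$ yields
\[
\mathrm{cat}(f^{n+1})=\mathrm{cat}(f\circ f^{n})\leq \min\{\mathrm{cat}(f^{n}),\mathrm{cat}(f)\}\leq \mathrm{cat}(f^{n}).
\]
This is already the desired inductive step: the sequence $(\mathrm{cat}(f^n))_{n\geq 1}$ is non-increasing. Iterating from $n=1$ upwards (or, equivalently, performing a short induction with base case $\mathrm{cat}(f^{2})\leq \mathrm{cat}(f)$ obtained from the same proposition) produces the chain
\[
\cdots\leq \mathrm{cat}(f^{n+1})\leq \mathrm{cat}(f^{n})\leq \cdots\leq \mathrm{cat}(f^{2})\leq \mathrm{cat}(f),
\]
and prepending the trivial lower bound $1\leq \mathrm{cat}(f^{n+1})$ gives exactly the statement of the corollary.

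There is no real obstacle here; the result is a clean consequence of Proposición~\ref{proposition:cat-composta}. The only minor point worth being careful about is to use the inequality $\mathrm{cat}(g\circ f)\leq \mathrm{cat}(f)$ (or $\mathrm{cat}(g)$) in the direction that matches the decomposition $f^{n+1}=f\circ f^{n}$, so that one actually bounds $\mathrm{cat}(f^{n+1})$ by $\mathrm{cat}(f^{n})$ rather than merely by $\mathrm{cat}(f)$. Since Proposición~\ref{proposition:cat-composta} delivers both bounds simultaneously through the minimum, this is automatic and the proof reduces to one line plus the induction.
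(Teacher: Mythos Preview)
Your argument is correct and matches the paper's approach: the corollary is stated immediately after Proposición~\ref{proposition:cat-composta} as a direct consequence of it, with no separate proof given. Your one-line application of $\mathrm{cat}(f^{n+1})=\mathrm{cat}(f\circ f^{n})\leq\mathrm{cat}(f^{n})$ plus the trivial lower bound is exactly what is intended.
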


No conocemos un ejemplo de una autoaplicación $f:X\to X$ tal que $\mathrm{cat}(f\circ f)<\mathrm{cat}(f)$. Del Corolario~\ref{corollary:cat-iterada} tenemos que existe $n_0\geq 1$ tal que $\mathrm{cat}(f^{n})=\mathrm{cat}(f^{n_0})$, para todo $n\geq n_0$. Luego, podemos dar la siguiente definición.

\begin{definition}\label{definition:cat-f-infty}
   Sea $f: X\to X$ una aplicación continua. Denotemos por \begin{align*}
       \mathrm{cat}_f&=\min\{n_0:~\mathrm{cat}(f^{n})=\mathrm{cat}(f^{n_0})\text{, para todo $n\geq n_0$}\}.\\
       \mathrm{cat}(f^{\infty})&=\mathrm{cat}(f^{\mathrm{cat}_f}).
   \end{align*} \[\] 
\end{definition}

 $\mathrm{cat}_f$ y $\mathrm{cat}(f^{\infty})$ son nuevos invariantes numéricos que no han sido estudiados. Así, presentamos el siguiente problema.

\begin{problem}
  Sea $f: X\to X$ una aplicación continua. Estudiar $\mathrm{cat}_f$ y $\mathrm{cat}(f^{\infty})$.  
\end{problem}

\medskip Ahora veamos que, si componemos a una aplicación con una equivalencia homotópica su categoría no se altera.

\begin{proposition}\label{proposition:cat-composta-equivalencia-homotopica}
    Sean  $f:X\to Y$ y $g:Y\to Z$ aplicaciones continuas. \begin{enumerate}
        \item[(1)] Si existe $h:Y\to X$ continua tal que $f\circ h\simeq 1_Y$ entonces $\mathrm{cat}(g\circ f)=\mathrm{cat}(g)$. 
        \item[(2)] Si existe $g':Z\to Y$ continua tal que $g'\circ g\simeq 1_Y$ entonces $\mathrm{cat}(g\circ f)=\mathrm{cat}(f)$. 
    \end{enumerate}
\end{proposition}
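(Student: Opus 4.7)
El plan es combinar tres herramientas ya establecidas: la Proposición~\ref{proposition:cat-composta} (categoría de una composición), la Proposición~\ref{proposition:cat-aplicacao-invariante-homotopico} (invariancia homotópica de $\mathrm{cat}$), y el Corolario~\ref{corollary:homo-igual} (compatibilidad de la homotopía con la composición lateral). La desigualdad $\mathrm{cat}(g\circ f)\leq\min\{\mathrm{cat}(f),\mathrm{cat}(g)\}$ ya está dada por la Proposición~\ref{proposition:cat-composta}, así que en ambos ítems sólo debo probar la desigualdad recíproca, usando la ``inversa homotópica parcial'' que aporta la hipótesis.

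Para el ítem (1), la estrategia consiste en componer $g\circ f$ con $h$ por la derecha y recuperar $g$ salvo homotopía. En efecto, por el Corolario~\ref{corollary:homo-igual} aplicado a $f\circ h\simeq 1_Y$ se obtiene
\[
(g\circ f)\circ h \;=\; g\circ(f\circ h) \;\simeq\; g\circ 1_Y \;=\; g.
\]
Por la Proposición~\ref{proposition:cat-aplicacao-invariante-homotopico} esto implica $\mathrm{cat}\!\left((g\circ f)\circ h\right)=\mathrm{cat}(g)$, y por la Proposición~\ref{proposition:cat-composta} aplicada a la composición $(g\circ f)\circ h$ tengo $\mathrm{cat}\!\left((g\circ f)\circ h\right)\leq \mathrm{cat}(g\circ f)$. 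Encadenando, concluyo $\mathrm{cat}(g)\leq\mathrm{cat}(g\circ f)$, lo que junto con la Proposición~\ref{proposition:cat-composta} da la igualdad deseada.

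Para el ítem (2) el razonamiento es totalmente análogo, pero ahora componiendo con $g'$ por la izquierda. Usando nuevamente el Corolario~\ref{corollary:homo-igual} con $g'\circ g\simeq 1_Y$ se tiene
\[
g'\circ(g\circ f) \;=\; (g'\circ g)\circ f \;\simeq\; 1_Y\circ f \;=\; f,
\]
así que la Proposición~\ref{proposition:cat-aplicacao-invariante-homotopico} da $\mathrm{cat}\!\left(g'\circ(g\circ f)\right)=\mathrm{cat}(f)$, y la Proposición~\ref{proposition:cat-composta} entrega $\mathrm{cat}\!\left(g'\circ(g\circ f)\right)\leq\mathrm{cat}(g\circ f)$, concluyendo $\mathrm{cat}(f)\leq\mathrm{cat}(g\circ f)$.

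No anticipo obstáculos sustantivos; todas las herramientas están disponibles. El único punto delicado es respetar el lado correcto de la composición en cada ítem (derecha con $h$ en (1), izquierda con $g'$ en (2)), de modo que la hipótesis $f\circ h\simeq 1_Y$, respectivamente $g'\circ g\simeq 1_Y$, aplique en la posición exacta donde aparece el factor $1_Y$ que colapsa la composición a $g$, respectivamente a $f$.
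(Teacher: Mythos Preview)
Your argument is correct, but it takes a different route from the paper's own proof. The paper establishes the reverse inequality by going back to the definition: starting from an open cover $W_1,\ldots,W_k$ of $X$ realizing $\mathrm{cat}(g\circ f)=k$, it pulls this cover back along $h$ (setting $U_j=h^{-1}(W_j)$) in item~(1), and uses the same cover $W_1,\ldots,W_k$ in item~(2), in each case verifying null-homotopy of the relevant restriction via the chain $g_{|U_j}\simeq (g\circ f)_{|W_j}\circ h_{|}$ or $f_{|W_j}\simeq g'\circ (g\circ f)_{|W_j}$. Your approach, by contrast, never touches open covers: it works entirely at the level of the numerical invariant, combining $\mathrm{cat}(g)=\mathrm{cat}\bigl((g\circ f)\circ h\bigr)\leq\mathrm{cat}(g\circ f)$ (respectively $\mathrm{cat}(f)=\mathrm{cat}\bigl(g'\circ(g\circ f)\bigr)\leq\mathrm{cat}(g\circ f)$) using only Propositions~\ref{proposition:cat-aplicacao-invariante-homotopico} and~\ref{proposition:cat-composta}. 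Your route is shorter and more conceptual once those two propositions are available; the paper's route is more hands-on and makes the mechanism at the level of covers explicit.
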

\begin{proof}
    Supongamos que $\mathrm{cat}(g\circ f)=k$ y considere $W_1,\ldots,W_k$ abiertos de $X$ tales que $X=W_1\cup\cdots\cup W_k$ y cada restricción $(g\circ f)_{|W_j}:W_j\to Z$ es nulo homotópica. 
    \begin{enumerate}
        \item[(1)] Para cada $j=1,\ldots,k$, consideremos $U_j=h^{-1}(W_j)$. Note que cada $U_j$ es abierto de $Y$ y $Y=U_1\cup\cdots\cup U_k$. Además, para cada $j=1,\ldots,k$, tenemos:\begin{align*}
            g_{| U_j}&=g\circ incl_{U_j}\\
            &=(g\circ 1_Y)\circ incl_{U_j}\\
            &\simeq \left(g\circ (f\circ h)\right)\circ incl_{U_j} & \mbox{ (sigue del Corolario~\ref{corollary:homo-igual}, ya que  $f\circ h\simeq 1_Y$)}\\
            &=\left((g\circ f)\circ h\right)\circ incl_{U_j}\\
            &=(g\circ f)\circ (h\circ incl_{U_j})\\
            &=(g\circ f)\circ (incl_{W_j}\circ h_{|})\\
             &=\left((g\circ f)\circ incl_{W_j}\right)\circ h_{|}\\
             &=(g\circ f)_{| W_j}\circ h_{|},
        \end{align*} donde $h_{|}:U_j\to W_j$ está dada por $h_{|}(x)=h(x)$, para todo $x\in U_j$. Así, por la Proposición~\ref{proposition:o-composta}, sigue que $g_{| U_j}$ es nulo homotópica, ya que $(g\circ f)_{|W_j}$ es nulo homotópica. Luego, $\mathrm{cat}(g)\leq k=\mathrm{cat}(g\circ f)$. Además, por la Proposición~\ref{proposition:cat-composta}, obtenemos que $\mathrm{cat}(g\circ f)=\mathrm{cat}(g)$.
        \item[(2)] Para cada $j=1,\ldots,k$, tenemos:
        \begin{align*}
            f_{| W_j}&=f\circ incl_{W_j}\\
            &=(1_Y\circ f)\circ incl_{W_j}\\
             &\simeq\left((g'\circ g)\circ f\right)\circ incl_{W_j}& \mbox{ (sigue del Corolario~\ref{corollary:homo-igual}, ya que  $g'\circ g\simeq 1_Y$)}\\
             &=\left(g'\circ (g\circ f)\right)\circ incl_{W_j}\\
             &=g'\circ\left((g\circ f)\circ incl_{W_j}\right)\\
             &=g'\circ(g\circ f)_{| W_j}.
        \end{align*} Así, nuevamente por la Proposición~\ref{proposition:o-composta}, sigue que $f_{| W_j}$ es nulo homotópica, ya que $(g\circ f)_{|W_j}$ es nulo homotópica. Luego, $\mathrm{cat}(f)\leq k=\mathrm{cat}(g\circ f)$. Además, nuevamente por la Proposición~\ref{proposition:cat-composta}, obtenemos que $\mathrm{cat}(g\circ f)=\mathrm{cat}(f)$.
    \end{enumerate}
\end{proof}

Proposición~\ref{proposition:cat-composta-equivalencia-homotopica} implica directamente el siguiente resultado.

\begin{corollary}\label{corollary:cat-funcion-composta-retraccion}
    Sea  $f:X\to Y$ una aplicación continua. \begin{enumerate}
        \item[(1)] Si $r:Z\to X$ es una retracción de $Z$ en $X$ entonces $\mathrm{cat}(f\circ r)=\mathrm{cat}(f)$. 
        \item[(2)] Si $Y$ es un retracto de $W$ entonces $\mathrm{cat}(incl_Y\circ f)=\mathrm{cat}(f)$, donde $incl_Y:Y\hookrightarrow W$ es la inclusión. 
    \end{enumerate}
\end{corollary}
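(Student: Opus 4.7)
El plan es reconocer que ambos ítems se obtienen como aplicación directa de la Proposición~\ref{proposition:cat-composta-equivalencia-homotopica}, escogiendo en cada caso las aplicaciones auxiliares naturales que proporciona la definición de retracto.

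Para el ítem~(1), tomaría una retracción $r:Z\to X$ prefijada; por definición existe la inclusión $incl_X:X\hookrightarrow Z$ cumpliendo $r\circ incl_X=1_X$. En el enunciado de la Proposición~\ref{proposition:cat-composta-equivalencia-homotopica}, identificaría como ``$f$'' a $r:Z\to X$ y como ``$g$'' a $f:X\to Y$, de modo que la composición ``$g\circ f$'' del enunciado general se lee exactamente como $f\circ r$. Para aplicar el ítem~(1) de dicha proposición haría falta exhibir $h:X\to Z$ continua con $r\circ h\simeq 1_X$, y la elección $h=incl_X$ resuelve el problema pues $r\circ incl_X=1_X$ incluso como igualdad, no solo como homotopía. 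La Proposición~\ref{proposition:cat-composta-equivalencia-homotopica}(1) entrega inmediatamente $\mathrm{cat}(f\circ r)=\mathrm{cat}(f)$.

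Para el ítem~(2), partiría de una retracción $r_Y:W\to Y$, que por hipótesis existe y satisface $r_Y\circ incl_Y=1_Y$. Ahora interpretaría a ``$f$'' del enunciado general como la aplicación original $f:X\to Y$ del corolario, y a ``$g$'' como la inclusión $incl_Y:Y\hookrightarrow W$, con lo cual $g\circ f$ coincide con $incl_Y\circ f$. Eligiendo $g'=r_Y:W\to Y$, la condición $g'\circ g\simeq 1_Y$ requerida por la Proposición~\ref{proposition:cat-composta-equivalencia-homotopica}(2) se cumple de nuevo por igualdad, y el ítem~(2) de esa proposición produce $\mathrm{cat}(incl_Y\circ f)=\mathrm{cat}(f)$.

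El paso más delicado, y en verdad el único, es el \emph{bookkeeping} notacional: identificar correctamente quién hace el papel de ``$f$'' y quién el de ``$g$'' en la proposición general al traducirla a cada situación del corolario (en el ítem~(1) la aplicación del corolario juega el rol de ``$g$'', mientras que en el ítem~(2) juega el rol de ``$f$''). No se requiere ninguna construcción homotópica nueva, pues las homotopías pedidas por la Proposición~\ref{proposition:cat-composta-equivalencia-homotopica} se satisfacen por igualdad gracias a la propia definición de retracción, de modo que la demostración quedaría reducida esencialmente a dos líneas una vez fijada la correspondencia.
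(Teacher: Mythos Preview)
Tu propuesta es correcta y sigue exactamente el mismo camino que el artículo: el enunciado aparece allí como consecuencia directa de la Proposición~\ref{proposition:cat-composta-equivalencia-homotopica}, y tú simplemente haces explícito el emparejamiento de variables que la frase ``implica directamente'' deja al lector. No hay diferencia de enfoque.
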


\begin{example}
Sea $f:X\to Y$ una aplicación continua.
\begin{enumerate}
    \item[(1)]  Del Ejemplo~\ref{x-retract-cilindro} tenemos que $X$ es un retracto por deformación del cilindro $\text{Cil}(X)$ cuya retracción por deformación está dada por $r:\text{Cil}(X)\to X$, $r(x,s)=x$, para cada $(x,s)\in \text{Cil}(X)$. Sea $F:\text{Cil}(X)\to Y,~F(x,s)=f(x)$ y note que $F=f\circ r$. Así, por el Corolario~\ref{corollary:cat-funcion-composta-retraccion} obtenemos que $\mathrm{cat}(F)=\mathrm{cat}(f)$. Note también que $f=F\circ incl_X$, donde $incl_X:X\hookrightarrow \text{Cil}(X),~incl_X(x)=(x,0)$, es la inclusión. 
\[
\xymatrix{ & \mathrm{Cil}(X) \ar[rd]^{F} \ar@/_10pt/[d]_{r} & & \\ 
&  X \ar@{^{(}->}[u]_{ } \ar[r]_{f} &  Y & }
\]
\item[(2)] Nuevamente, por el Ejemplo~\ref{x-retract-cilindro} tenemos que $Y$ es un retracto por deformación del cilindro $\text{Cil}(Y)$. Así, por la Proposición~\ref{proposition:cat-composta-equivalencia-homotopica} obtenemos que $\mathrm{cat}(incl_Y\circ f)=\mathrm{cat}(f)$, ya que la inclusión $incl_Y:Y\hookrightarrow \text{Cil}(Y),~incl_Y(y)=(y,0)$, es una equivalencia homotópica.  
\[
\xymatrix{ &    &\mathrm{Cil}(Y) & \\ 
&  X \ar[ru]^{incl_Y\circ f} \ar[r]_{f} &  Y\ar@{^{(}->}[u]_{incl_Y} & }
\]
\end{enumerate} 
\end{example} 

De \cite[p. 407]{hatcher2002} recordemos la siguiente construcción y definición.

\begin{definition}
    Para una aplicación continua $f:X\to Y$, considere el espacio  
\begin{equation*}
E_f=\{(x,\gamma)\in X\times \mathrm{Map}([0,1],Y)\mid~\gamma(0)=f(x)\},  
\end{equation*} donde $\mathrm{Map}([0,1],Y)$ es el espacio de todos los caminos $[0,1]\to Y$. La aplicación \begin{equation*}
\rho_f:E_f\to Y,~(x,\gamma)\mapsto \rho_f(x,\gamma)=\gamma(1)
\end{equation*} es una fibración.
\end{definition}

 Además, la proyección sobre la primera coordenada $\pi:E_f\to X,~(x,\gamma)\mapsto x$ es una equivalencia homotópica con inversa homotópica $c:X\to E_f$ dada por $x\mapsto (x,\gamma_{f(x)})$, donde $\gamma_{f(x)}$ es el camino constante en $f(x)$. Así, tenemos una factorización $$\left(X\stackrel{f}\to Y\right)\,=\,\left(X\stackrel{c}{\to} E_f\stackrel{\rho_f}{\to} Y\right),$$ la cual es una composición de una equivalencia homotópica seguida por una fibración. Además, note que la proyección $\pi:E_f\to X,~(x,\gamma)\mapsto x$ es una retracción por deformación de $E_f$ en $X$. Luego, aplicando la Proposición~\ref{proposition:cat-composta-equivalencia-homotopica} obtenemos el siguiente resultado.

\begin{proposition}\label{proposition:cat-map-cat-fibration}
    Sea $f:X\to Y$ una aplicación continua. Tenemos que \[\mathrm{cat}(f)=\mathrm{cat}(\rho_f).\]
\end{proposition}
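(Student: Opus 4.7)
El plan es aprovechar la factorización $f=\rho_f\circ c$ ya exhibida en la discusión previa al enunciado, junto con el hecho (también explicitado allí) de que $\pi:E_f\to X$ es una retracción por deformación con sección $c$, para aplicar directamente la Proposición~\ref{proposition:cat-composta-equivalencia-homotopica}(1) al par $(c,\rho_f)$.

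Concretamente, en el enunciado de aquella proposición los papeles de las aplicaciones genéricas $f$, $g$, $h$ los haría desempeñar $c:X\to E_f$, $\rho_f:E_f\to Y$ y $\pi:E_f\to X$, respectivamente. La hipótesis requerida, a saber $c\circ\pi\simeq 1_{E_f}$, es precisamente la parte \aspas{por deformación} de la retracción $\pi$ sobre $X$ (la igualdad $\pi\circ c=1_X$ ni siquiera interviene en este paso). La Proposición~\ref{proposition:cat-composta-equivalencia-homotopica}(1) entrega entonces $\mathrm{cat}(\rho_f\circ c)=\mathrm{cat}(\rho_f)$. Por otra parte, para cada $x\in X$ se tiene $\rho_f(c(x))=\rho_f(x,\gamma_{f(x)})=\gamma_{f(x)}(1)=f(x)$, de modo que $\rho_f\circ c=f$ puntualmente, y se concluye $\mathrm{cat}(f)=\mathrm{cat}(\rho_f)$.

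No anticipo ningún obstáculo real: toda la carga técnica (que $\rho_f$ sea una fibración, y que $c$ y $\pi$ sean mutuamente inversas homotópicas) ya está asumida en el preámbulo del enunciado. El único paso no rutinario es emparejar correctamente los datos con los papeles $f$, $g$, $h$ de la Proposición~\ref{proposition:cat-composta-equivalencia-homotopica}(1), y optar por la parte (1) en lugar de la (2) — esta última exigiría una inversa por la izquierda de $\rho_f$, que no está disponible en general.
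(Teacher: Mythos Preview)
Tu propuesta es correcta y coincide en esencia con la del artículo: éste se limita a indicar que el resultado se obtiene \aspas{aplicando la Proposición~\ref{proposition:cat-composta-equivalencia-homotopica}} a la factorización $f=\rho_f\circ c$ y al hecho de que $\pi$ es retracción por deformación, sin detallar qué parte ni con qué asignaciones. Tú has hecho explícito ese emparejamiento (parte~(1), con $h=\pi$ y la hipótesis $c\circ\pi\simeq 1_{E_f}$) y has verificado puntualmente $\rho_f\circ c=f$, que es justamente lo que el artículo da por sobreentendido.
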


De \cite[p. 348]{hatcher2002}, una \textit{aplicación celular}\index{Aplicación celular} $f:X\to Y$, entre complejos CW, es una aplicación continua tal que $f(X^q)\subset Y^q$, para todo $q\geq 0$. Aquí $X^q$ denota el $q$-esqueleto de $X$, o sea, el conjunto de todas la células de dimensión menor o igual que $q$. 

\begin{remark}\label{remark:teorema-aproximacion-celular}
 Sean $X$, $Y$ complejos CW y $f:X\to Y$ una aplicación continua. De  \cite[Theorem 4.8, p. 349]{hatcher2002}, Teorema de aproximación celular, tenemos que existe una aplicación celular $g:X\to Y$ tal que $g\simeq f$. 
\end{remark}

\begin{example}\label{cat-map-esferas}
    Sea $f:S^n\to S^m$ una aplicación continua, con $n<m$. Veamos que $\mathrm{cat}(f)=1$. Consideremos sobre $S^n$ su estructura celular estándar, o sea, una única célula de dimensión $0$ y una única célula de dimensión $n$. Así, $(S^n)^q=\{\ast\}$, para cada $q<n$, y $(S^n)^n=S^n$. Por el Teorema de aproximación celular (ver Observación~\ref{remark:teorema-aproximacion-celular}), existe una aplicación celular $g:S^n\to S^m$ tal que $g\simeq f$. Note que $g(S^n)\subset\{\ast\}$. Así, $g$ es una aplicación constante. Luego, $f$ es nulo homotópica.  
\end{example}

\subsection{Categoría de Lusternik-Schnirelmann}\label{sec:categoriaLS}
En esta sección recordamos la definición (Definición~\ref{definition:LS-category-def}) y propiedades básicas de categoría LS. La categoría LS da una cota superior para la categoría de una aplicación (Proposición~\ref{proposition:cat-catLS}). En particular, se cumple que la categoría de una aplicación con dominio o codominio una esfera es a lo máximo 2 (Corolario~\ref{corollary:cat-do-codo-esfera}). La Proposición~\ref{proposition:cat-no-nulo-2} da una caracteización de cuando una aplicación continua, con dominio o codominio una esfera, no es nulo homotópica. El comportamiento de la categoría LS para los retractos está dada en la Proposición~\ref{proposition:x-domina-y-catLS}. En particular se muestra que la categoría LS es un invariante homotópico (Corolario~\ref{corollary:cat-space-invariante}). Además, se tiene que la categoría LS de un espacio siempre le gana a la categoría LS de sus retractos (Corolario~\ref{corollary:catLS-retractos}). La Proposición~\ref{proposition:cat-map-inversa-homotopica} muestra la categoría de una equivalencia homotópica. En la Definición~\ref{definition:secat-def} recordamos la noción de categoría seccional. Una conexión entre categoría, categoría seccional y categoría LS está dada en el Teorema~\ref{theorem:cat-secat-maior-cat}. La Proposición~\ref{proposition:secat-1-cat-igual-catLS} muestra una igualdad entre la categoría de una aplicación, con categoría seccional uno, y la categoría LS de su codomoinio. En particular, se tiene que la categoría de la proyección sobre la primera coordenada definida sobre el espacio de configuraciones ordenado con dos puntos coincide con la categoría del espacio, siempre que tal espacio admita una aplicación continua sin punto fijo (Proposición~\ref{proposition:cat-proyection-section}).  
 
\medskip De \cite[Definition 1.1, pg. 1]{cornea2003} recordemos la noción de categoría de Lusternik-Schnirelmann o simplemente categoría LS de un espacio topológico. 

\begin{definition}[Categoría LS]\label{definition:LS-category-def}
   Sea $X$ un espacio topológico. La \textit{categoría LS}\index{Categoría LS} de $X$, denotado por $\mathrm{cat}(X)$, es el menor entero positivo $k$ tal que existen $U_1,\ldots,U_k$ abiertos de $X$ tales que $X=U_1\cup\cdots\cup U_k$ y cada inclusión $U_j\hookrightarrow X$ es nulo homotópica. En el caso que no exista tal entero $k$ escribiremos $\text{cat}(X)=\infty$. 
\end{definition}

Note que $\mathrm{cat}(1_X)=\mathrm{cat}(X)$ para cualquier espacio topológico $X$. Además, $\mathrm{cat}(X)=1$ si y solamente si $X$ es contráctil.

\begin{example}\label{exam:catLS}
\noindent\begin{enumerate}
    \item[(1)] $\mathrm{cat}(\mathbb{R}^n)=1$ para cualquier $n\geq 0$.
    \item[(2)] $\mathrm{cat}(S^m)=2$ para todo $m\geq 0$. De hecho, como $S^m$ no es contráctil, tenemos que $\mathrm{cat}(S^m)\geq 2$. Por otro lado, considere $U_1=S^{m}\setminus\{p_N\}$ y $U_2=S^{m}\setminus\{p_S\}$, donde $p_N$ y $p_S$ son el polo norte y sur, respectivamente. Claramente, cada $U_i$ es abierto de $S^m$ y $S^m=U_1\cup U_2$. Además, cada $U_i$ es contráctil, ya que es homeomorfo a $\mathbb{R}^m$, usando la proyección estereográfica. En particular, cada $U_i$ es contráctil en $S^m$. Así, $\mathrm{cat}(S^m)\leq 2$. 
\end{enumerate}    
\end{example}

Proposición~\ref{proposition:cat-composta} implica la siguiente afirmación.

\begin{proposition}\label{proposition:cat-catLS}
    Sea $f:X\to Y$ una aplicación continua. Tenemos que \[\text{cat}(f)\leq\min\{\mathrm{cat}(X),\mathrm{cat}(Y)\}.\]
\end{proposition}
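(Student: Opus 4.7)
El plan es aplicar directamente la Proposición~\ref{proposition:cat-composta} a dos factorizaciones triviales de $f$. Concretamente, escribiré $f=f\circ 1_X$ y $f=1_Y\circ f$, de modo que la cota $\mathrm{cat}(g\circ h)\leq\min\{\mathrm{cat}(h),\mathrm{cat}(g)\}$ provista por la Proposición~\ref{proposition:cat-composta} proporciona simultáneamente cotas de $\mathrm{cat}(f)$ por $\mathrm{cat}(1_X)$ y por $\mathrm{cat}(1_Y)$.

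Primero, recordaré la observación hecha inmediatamente después de la Definición~\ref{definition:LS-category-def}: para cualquier espacio topológico $Z$ se tiene $\mathrm{cat}(1_Z)=\mathrm{cat}(Z)$. En efecto, una cobertura abierta $\{U_j\}$ de $Z$ testifica que $\mathrm{cat}(Z)\leq k$ si y sólo si cada inclusión $U_j\hookrightarrow Z$ es nulo homotópica, y esto coincide exactamente con la condición de que cada restricción $(1_Z)_{|U_j}=incl_{U_j}$ sea nulo homotópica. Aplicando entonces la Proposición~\ref{proposition:cat-composta} a la composición $f\circ 1_X=f$ obtendré
\[
\mathrm{cat}(f)=\mathrm{cat}(f\circ 1_X)\leq\min\{\mathrm{cat}(1_X),\mathrm{cat}(f)\}\leq\mathrm{cat}(1_X)=\mathrm{cat}(X),
\]
y análogamente, aplicándola a $1_Y\circ f=f$,
\[
\mathrm{cat}(f)=\mathrm{cat}(1_Y\circ f)\leq\min\{\mathrm{cat}(f),\mathrm{cat}(1_Y)\}\leq\mathrm{cat}(1_Y)=\mathrm{cat}(Y).
\]
Combinando ambas desigualdades concluiré $\mathrm{cat}(f)\leq\min\{\mathrm{cat}(X),\mathrm{cat}(Y)\}$, que es precisamente la afirmación deseada.

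No anticipo ningún paso difícil: todo el argumento es una consecuencia formal inmediata de la Proposición~\ref{proposition:cat-composta} junto con la identificación $\mathrm{cat}(1_Z)=\mathrm{cat}(Z)$. El único punto que merece mencionarse es que la desigualdad $\mathrm{cat}(f)\leq\mathrm{cat}(X)$ también admite una prueba directa (cualquier cobertura abierta $\{U_j\}$ de $X$ por abiertos contráctiles en $X$ cumple que $f_{|U_j}=f\circ incl_{U_j}$ es nulo homotópica por la Proposición~\ref{proposition:o-composta}), y la desigualdad $\mathrm{cat}(f)\leq\mathrm{cat}(Y)$ admite una prueba directa similar tirando hacia atrás por $f$ una cobertura categórica de $Y$; esto confirma que la ruta vía la Proposición~\ref{proposition:cat-composta} es solamente un empaquetamiento cómodo de estos dos argumentos.
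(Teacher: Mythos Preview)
Your proof is correct and follows exactly the same route as the paper: factor $f=f\circ 1_X$ and $f=1_Y\circ f$, apply Proposici\'on~\ref{proposition:cat-composta}, and use $\mathrm{cat}(1_Z)=\mathrm{cat}(Z)$. The additional justification you give for $\mathrm{cat}(1_Z)=\mathrm{cat}(Z)$ and the remark about direct proofs are fine elaborations but not required.
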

\begin{proof}
    Note que $f=f\circ 1_X$ y $f=1_Y\circ f$. Luego, por la Proposición~\ref{proposition:cat-composta}, tenemos que $\text{cat}(f)\leq\text{cat}(X)$ y $\text{cat}(f)\leq\text{cat}(Y)$.
\end{proof}

\begin{example}
    Sea $X$ un espacio topológico y $\Delta:X\to X\times X,~\Delta(x)=(x,x)$, la aplicación diagonal. Tenemos que $\mathrm{cat}(\Delta)=\mathrm{cat}(X)$. De hecho, por Proposición~\ref{proposition:cat-catLS}, tenemos que $\mathrm{cat}(\Delta)\leq\mathrm{cat}(X)$. Por otro lado, note que $1_X=p_1\circ\Delta$, donde $p_1:X\times X\to X,~p_1(x_1,x_2)=x_1$, es la proyección en la primera coordenada. Luego, por la Proposición~\ref{proposition:cat-composta}, sigue que $\mathrm{cat}(X)=\mathrm{cat}(1_X)\leq\mathrm{cat}(\Delta)$. 
\end{example}

Ahora veamos que la categoría de una aplicación con dominio o codominio una esfera es a lo máximo 2.

\begin{corollary}\label{corollary:cat-do-codo-esfera}
 Sea $f:X\to Y$ una aplicación continua. Si $X$ o $Y$ es una esfera entonces \[\text{cat}(f)\in\{1,2\}.\]   
\end{corollary}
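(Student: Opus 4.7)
The plan is to obtain the corollary as an immediate consequence of Proposición~\ref{proposition:cat-catLS} combined with the computation $\mathrm{cat}(S^m)=2$ from Ejemplo~\ref{exam:catLS}. Since $\mathrm{cat}(f)$ is by definition a positive integer (whenever finite), the lower bound $\mathrm{cat}(f)\geq 1$ is automatic, so the only content is the upper bound $\mathrm{cat}(f)\leq 2$.

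First I would recall that Proposición~\ref{proposition:cat-catLS} gives the inequality
\[
\mathrm{cat}(f)\leq \min\{\mathrm{cat}(X),\mathrm{cat}(Y)\}
\]
for every continuous map $f:X\to Y$. Then I would split into two cases according to whether $X$ or $Y$ is a sphere. If $X=S^{n}$ for some $n\geq 0$, Ejemplo~\ref{exam:catLS} gives $\mathrm{cat}(S^{n})=2$, so $\mathrm{cat}(f)\leq \mathrm{cat}(X)=2$. If instead $Y=S^{m}$ for some $m\geq 0$, the same example yields $\mathrm{cat}(Y)=2$ and hence $\mathrm{cat}(f)\leq \mathrm{cat}(Y)=2$. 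In either case, combining with $\mathrm{cat}(f)\geq 1$ we conclude $\mathrm{cat}(f)\in\{1,2\}$.

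There is no real obstacle here: the corollary is a formal consequence of two results already established in the excerpt. The only point that deserves a brief comment in the write-up is that $\min\{\mathrm{cat}(X),\mathrm{cat}(Y)\}\leq \mathrm{cat}(S^{m})=2$ as soon as one of $X$ or $Y$ equals a sphere, so the minimum in Proposición~\ref{proposition:cat-catLS} is bounded by $2$ regardless of the other factor.
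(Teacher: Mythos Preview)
Your proposal is correct and follows essentially the same route as the paper: invoke Ejemplo~\ref{exam:catLS}(2) to get $\mathrm{cat}(S^m)=2$ and then apply Proposición~\ref{proposition:cat-catLS} to conclude $\mathrm{cat}(f)\leq 2$. The paper's argument is just a slightly terser version of what you wrote.
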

\begin{proof}
    Del Ítem (2) del Ejemplo\ref{exam:catLS} tenemos que $\text{cat}(X)=2$ o $\text{cat}(Y)=2$. Luego, por la Proposición~\ref{proposition:cat-catLS}, sigue que $\text{cat}(f)\leq 2$.
\end{proof}

Así, podemos dar una caracterización de cuando una aplicación continua, con dominio o contradominio una esfera, no es nulo homotópica.

\begin{proposition}\label{proposition:cat-no-nulo-2}
  Sea $f:X\to Y$ una aplicación continua con $X$ o $Y$ una esfera. Tenemos que $\text{cat}(f)=2$ si, y solamente si, $f$ no es nulo homotópica.   
\end{proposition}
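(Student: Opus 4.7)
The plan is to observe that the statement is essentially an immediate consequence of two facts already established in the excerpt, so the proof is a short two-step deduction rather than a substantive argument.

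First, I would invoke Corolario~\ref{corollary:cat-do-codo-esfera}, which tells us that under the hypothesis that $X$ or $Y$ is a sphere, the only possible values of $\text{cat}(f)$ are $1$ and $2$. This reduces the biconditional to showing $\text{cat}(f)=1$ if and only if $f$ is null homotopic.

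Second, I would appeal to the remark made directly after the definition of the category of a map, namely that $\text{cat}(f)=1$ if and only if $f$ is null homotopic (this is just the definition unpacked in the case $n=1$, with the single open cover $\{X\}$ and the null homotopy of $f=f_{|X}$ itself). Combining these two observations, $\text{cat}(f)=2$ is equivalent to $\text{cat}(f)\neq 1$, which in turn is equivalent to $f$ not being null homotopic.

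There is no real obstacle: the whole content of the proposition sits in Corolario~\ref{corollary:cat-do-codo-esfera} (which bounds $\text{cat}(f)\le 2$ using $\text{cat}(S^m)=2$ and Proposición~\ref{proposition:cat-catLS}) together with the trivial characterization $\text{cat}(f)=1\Leftrightarrow f\simeq\text{constant}$. I would simply write both directions explicitly for the reader: if $f$ is null homotopic then $\text{cat}(f)=1\neq 2$, and conversely if $f$ is not null homotopic then $\text{cat}(f)\ge 2$, while Corolario~\ref{corollary:cat-do-codo-esfera} forces $\text{cat}(f)\le 2$, giving equality.
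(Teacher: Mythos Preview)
Your proposal is correct and follows essentially the same approach as the paper: both arguments combine the characterization $\mathrm{cat}(f)=1\Leftrightarrow f$ is nulo homot\'opica with Corolario~\ref{corollary:cat-do-codo-esfera} to force $\mathrm{cat}(f)\leq 2$. The paper's proof is slightly terser but the logical content is identical.
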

\begin{proof}
    Por la definición de categoría, note que, si $\text{cat}(f)=2$ entonces $f$ no es nulo homotópica. Ahora, si $f$ no es nulo homotópica entonces, nuevamente por la definición de categoría, sigue que $\text{cat}(f)\geq 2$. Luego, por el Corolario~\ref{corollary:cat-do-codo-esfera}, concluimos que $\text{cat}(f)=2$. 
\end{proof}

Ahora, veamos el comportamiento de la categoría LS para los retractos. Para ello, tenemos el siguiente resultado.

\begin{proposition}\label{proposition:x-domina-y-catLS}
  Sean $X, Y$ espacios topológicos. Si $X$ domina a $Y$, entonces \[\text{cat}(Y)\leq\text{cat}(X).\] Equivalentemente, si  $\text{cat}(Y)>\text{cat}(X)$ entonces $X$ no domina a $Y$.
\end{proposition}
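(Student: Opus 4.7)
The plan is to start from a covering of $X$ that witnesses $\mathrm{cat}(X)$ and pull it back to a covering of $Y$ via the dominating structure. Assume $\mathrm{cat}(X)=n$ and fix continuous maps $f:X\to Y$ and $g:Y\to X$ with $f\circ g\simeq 1_Y$. Choose open sets $U_1,\ldots,U_n$ of $X$ covering $X$ such that each inclusion $incl_{U_i}:U_i\hookrightarrow X$ is nulo homotópica.

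Next, I would set $V_i=g^{-1}(U_i)$ for $i=1,\ldots,n$. Continuity of $g$ guarantees each $V_i$ is open in $Y$, and since the $U_i$ cover $X$, their preimages cover $Y$. Write $g_{|}:V_i\to U_i$ for the corestricted map, so that $g\circ incl_{V_i}=incl_{U_i}\circ g_{|}$.

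The central step is to show every inclusion $incl_{V_i}:V_i\hookrightarrow Y$ is nulo homotópica. For this I rewrite
\[
incl_{V_i}=1_Y\circ incl_{V_i}\simeq (f\circ g)\circ incl_{V_i}=f\circ(g\circ incl_{V_i})=f\circ incl_{U_i}\circ g_{|},
\]
where the homotopy comes from the Corolario~\ref{corollary:homo-igual} applied to $f\circ g\simeq 1_Y$. Since $incl_{U_i}$ is nulo homotópica, the composition $f\circ incl_{U_i}\circ g_{|}$ is nulo homotópica by two applications of Proposición~\ref{proposition:o-composta}, and then $incl_{V_i}$ is nulo homotópica by Proposición~\ref{proposition:cat-aplicacao-invariante-homotopico} (homotopic maps share the same category, and cat~$=1$ is equivalent to nulo homotopía). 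Consequently $\{V_1,\ldots,V_n\}$ is a categorical open cover of $Y$, giving $\mathrm{cat}(Y)\le n=\mathrm{cat}(X)$. The equivalent contrapositive formulation follows immediately.

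I do not anticipate any real obstacle: the argument is a direct pullback of the categorical cover, and the only point that requires care is the bookkeeping with the factorization $g\circ incl_{V_i}=incl_{U_i}\circ g_{|}$, which is the same kind of manipulation already carried out in the proof of Proposición~\ref{proposition:cat-composta}. If the paper were to require a covering-dimension bound or local finiteness this could complicate things, but the definition of $\mathrm{cat}$ used here is the purely topological open-cover version, so no such issue arises.
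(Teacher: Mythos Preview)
Your proof is correct. The pullback $V_i=g^{-1}(U_i)$ and the factorization $g\circ incl_{V_i}=incl_{U_i}\circ g_{|}$ do exactly the work you claim, and the chain $incl_{V_i}\simeq f\circ incl_{U_i}\circ g_{|}$ followed by Proposición~\ref{proposition:o-composta} cleanly gives null-homotopy of each $incl_{V_i}$. (A minor stylistic point: to pass from ``$incl_{V_i}$ is homotopic to a null-homotopic map'' to ``$incl_{V_i}$ is null-homotopic'' you could cite transitivity of $\simeq$ directly rather than invoking Proposición~\ref{proposition:cat-aplicacao-invariante-homotopico}, but your citation is valid.)

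The paper takes a different, more compressed route: instead of building a categorical cover of $Y$ by hand, it argues purely with the numerical invariants already developed, via the chain
\[
\mathrm{cat}(Y)=\mathrm{cat}(1_Y)=\mathrm{cat}(f\circ g)\le\min\{\mathrm{cat}(f),\mathrm{cat}(g)\}\le\mathrm{cat}(X),
\]
quoting Proposiciones~\ref{proposition:cat-aplicacao-invariante-homotopico}, \ref{proposition:cat-composta} and~\ref{proposition:cat-catLS} in turn. Your argument is essentially what one obtains by unfolding the proofs of Proposición~\ref{proposition:cat-composta} (second half) and Proposición~\ref{proposition:cat-catLS} inside this chain. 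The paper's version is shorter and showcases the category-of-a-map machinery the chapter has been building; yours is more self-contained and would work even without having introduced $\mathrm{cat}(f)$ for general maps.
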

\begin{proof}
    Sean $Y\stackrel{g}{\to} X\stackrel{f}{\to} Y$ aplicaciones continuas tales que $f\circ g\simeq 1_Y$. Tenemos \begin{align*}
        \text{cat}(Y)&=\text{cat}(1_Y)\\
        &=\text{cat}(f\circ g) & \mbox{ (sigue de la Proposición~\ref{proposition:cat-aplicacao-invariante-homotopico})}\\
        &\leq\min\{\text{cat}(f),\text{cat}(g)\} & \mbox{ (sigue de la Proposición~\ref{proposition:cat-composta})}\\
        &\leq\text{cat}(X) & \mbox{ (sigue de la Proposición~\ref{proposition:cat-catLS})}
    \end{align*}
\end{proof}

En particular, obtenemos que la categoria LS es un invariante homotópico.

\begin{corollary}\label{corollary:cat-space-invariante}
Sean $X,Y$ espacios topológicos. Si $X\simeq Y$ entonces $\mathrm{cat}(X)=\mathrm{cat}(Y)$.     
\end{corollary}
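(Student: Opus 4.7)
El plan es deducir el corolario directamente de la Proposición~\ref{proposition:x-domina-y-catLS}, observando que una equivalencia homotópica produce una situación de doble dominación. Concretamente, si $X\simeq Y$ entonces, por la Definición~\ref{definition:homotopy-equivalence}, existen aplicaciones continuas $f:X\to Y$ y $g:Y\to X$ tales que $g\circ f\simeq 1_X$ y $f\circ g\simeq 1_Y$.

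Primero observaría que la relación $f\circ g\simeq 1_Y$ es exactamente la condición de que $X$ domina a $Y$ (vía el par $(f,g)$), y por tanto la Proposición~\ref{proposition:x-domina-y-catLS} da $\mathrm{cat}(Y)\leq\mathrm{cat}(X)$. Luego intercambiaría los roles: la relación $g\circ f\simeq 1_X$ dice que $Y$ domina a $X$ (ahora vía el par $(g,f)$), así que nuevamente la Proposición~\ref{proposition:x-domina-y-catLS} implica $\mathrm{cat}(X)\leq\mathrm{cat}(Y)$. Combinando ambas desigualdades se obtiene la igualdad $\mathrm{cat}(X)=\mathrm{cat}(Y)$.

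No anticipo ningún obstáculo técnico: la prueba es esencialmente una aplicación simétrica de un resultado ya establecido, y lo único que debe verificarse con cuidado es que la definición de ``dominar'' requiere únicamente la existencia de las dos aplicaciones con $f\circ g\simeq 1_Y$ (sin pedir la otra homotopía), de modo que una equivalencia homotópica claramente satisface esa hipótesis en ambos sentidos. Por lo tanto la demostración consistirá en dos líneas: aplicar la Proposición~\ref{proposition:x-domina-y-catLS} en cada dirección y concluir por la antisimetría del orden en los naturales.
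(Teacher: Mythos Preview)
Tu propuesta es correcta y coincide con el enfoque del artículo: el corolario se enuncia inmediatamente después de la Proposición~\ref{proposition:x-domina-y-catLS} precedido por ``En particular'', de modo que la demostración implícita es exactamente la aplicación simétrica de dicha proposición que describes.
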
 

La vuelta del Corolario~\ref{corollary:cat-space-invariante} no es válida. Luego, es natural formular el siguiente problema.

\begin{problem}
   Sean $X,Y$ espacios topológicos. Existen condiciones para $X$ e $Y$ tales que la vuelta del Corolario~\ref{corollary:cat-space-invariante} sea válida, o sea, si $\mathrm{cat}(X)=\mathrm{cat}(Y)$ implique que $X\simeq Y$? 
\end{problem}

Además, tenemos que la categoría LS de un espacio siempre le gana a la categoría LS de sus retractos.

\begin{corollary}\label{corollary:catLS-retractos}
Si $A$ es un retracto de $X$ entonces $\mathrm{cat}(A)\leq\mathrm{cat}(X)$.     
\end{corollary}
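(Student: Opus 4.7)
El plan es reducir la afirmación directamente a la Proposición~\ref{proposition:x-domina-y-catLS}, que ya establece la monotonía de la categoría LS bajo dominación. Así, sólo es necesario verificar que todo retracto es dominado por su espacio ambiente, es decir, si $A$ es un retracto de $X$ entonces $X$ domina a $A$ en el sentido de la Definición~\ref{definition:homotopy-equivalence}.

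Para ello, primero recordaría la hipótesis: $A$ es un retracto de $X$ significa que existe una retracción continua $r:X\to A$ tal que $r\circ incl_A=1_A$, donde $incl_A:A\hookrightarrow X$ es la aplicación inclusión. Tomando las aplicaciones $f=r:X\to A$ y $g=incl_A:A\to X$, tenemos la igualdad estricta $f\circ g=r\circ incl_A=1_A$, que en particular implica $f\circ g\simeq 1_A$ (por la reflexividad de la relación de homotopía, Proposición~\ref{proposition:relacion-homotopia}). Esto muestra precisamente que $X$ domina a $A$.

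Finalmente, aplicaría la Proposición~\ref{proposition:x-domina-y-catLS} a la situación $X$ domina a $A$, obteniendo $\mathrm{cat}(A)\leq \mathrm{cat}(X)$, que es lo que se quería demostrar. Observo que esto mismo puede verse también como consecuencia inmediata del Ejemplo sobre dominación mencionado tras la Definición~\ref{definition:homotopy-equivalence}, donde se establece que todo espacio topológico domina a sus retractos; en tal lectura, el corolario es una lectura directa de la monotonía de $\mathrm{cat}(-)$ bajo dominación. No anticipo ninguna dificultad técnica, pues toda la maquinaria (la implicación dominación $\Rightarrow$ desigualdad de categorías LS) ya está disponible en la Proposición~\ref{proposition:x-domina-y-catLS}, y la única observación no trivial es identificar explícitamente al par $(r,incl_A)$ como los testigos de la dominación.
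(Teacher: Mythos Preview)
Your proof is correct and follows exactly the approach implicit in the paper: the corollary is stated without proof precisely because it is an immediate consequence of Proposición~\ref{proposition:x-domina-y-catLS} together with the observation (recorded in the example following Definición~\ref{definition:homotopy-equivalence}) that every space dominates its retracts via the pair $(r,\,incl_A)$.
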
 

\begin{example}
    Como $\text{cat}(D^n)=1$ y $\text{cat}(S^{n-1})=2$, por el Corolario~\ref{corollary:catLS-retractos}, podemos concluir que la esfera $S^{n-1}$ no es un retracto del disco $D^n$.  
\end{example}

Como una aplicación directa de la Proposición~\ref{proposition:cat-aplicacao-invariante-homotopico} junto con la Proposición~\ref{proposition:cat-composta-equivalencia-homotopica}, tenemos la siguiente afirmación.  

\begin{proposition}\label{proposition:cat-map-inversa-homotopica}
    Sea $f:X\to Y$ una equivalencia homotópica con inversa homotópica $g:Y\to X$. Entonces, $\mathrm{cat}(f)=\mathrm{cat}(g)=\mathrm{cat}(X)=\mathrm{cat}(Y)$.
\end{proposition}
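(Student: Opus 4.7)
El plan es aplicar directamente la Proposición~\ref{proposition:cat-composta-equivalencia-homotopica}, junto con la observación elemental $\mathrm{cat}(1_X)=\mathrm{cat}(X)$, para establecer por separado las igualdades $\mathrm{cat}(f)=\mathrm{cat}(X)$ y $\mathrm{cat}(f)=\mathrm{cat}(Y)$; luego un argumento simétrico sobre $g$ cerrará la cadena. La Proposición~\ref{proposition:cat-aplicacao-invariante-homotopico} entrará en juego implícitamente a través del paso que compara $\mathrm{cat}$ con $\mathrm{cat}(\mathrm{id})$.

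Primero, para obtener $\mathrm{cat}(f)=\mathrm{cat}(X)$, descompondré $f$ como $f=f\circ 1_X$ y usaré el Ítem (2) de la Proposición~\ref{proposition:cat-composta-equivalencia-homotopica} con la elección $g'=g$. Como $g$ es inversa homotópica de $f$ se tiene $g\circ f\simeq 1_X$, así que la conclusión del ítem mencionado da $\mathrm{cat}(f)=\mathrm{cat}(f\circ 1_X)=\mathrm{cat}(1_X)=\mathrm{cat}(X)$. De manera análoga, escribiendo $f=1_Y\circ f$ y aplicando el Ítem (1) con $h=g$, la relación $f\circ g\simeq 1_Y$ produce $\mathrm{cat}(f)=\mathrm{cat}(1_Y\circ f)=\mathrm{cat}(1_Y)=\mathrm{cat}(Y)$.

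Finalmente, como $g:Y\to X$ también es una equivalencia homotópica con inversa homotópica $f$, el mismo argumento aplicado a $g$ en lugar de $f$ entregará $\mathrm{cat}(g)=\mathrm{cat}(Y)=\mathrm{cat}(X)$, cerrando la cadena de igualdades. No anticipo ningún obstáculo serio: el argumento es una aplicación esencialmente mecánica de las dos proposiciones previas, diseñadas precisamente para encapsular este tipo de deducción. El único cuidado está en identificar correctamente, en cada invocación de la Proposición~\ref{proposition:cat-composta-equivalencia-homotopica}, quién desempeña el papel de $f$, $g$, $h$ o $g'$ en el enunciado, ya que los roles se alternan entre las dos direcciones que se utilizan.
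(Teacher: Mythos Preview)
Your proposal is correct and follows essentially the same approach as the paper: both arguments are direct applications of Proposición~\ref{proposition:cat-composta-equivalencia-homotopica}. The only cosmetic difference is that the paper applies that proposition to the composites $g\circ f$ and $f\circ g$ (invoking Proposición~\ref{proposition:cat-aplicacao-invariante-homotopico} to pass from $\mathrm{cat}(1_X)$ to $\mathrm{cat}(g\circ f)$), whereas you apply it to the trivial factorizations $f=f\circ 1_X$ and $f=1_Y\circ f$; note that in your version Proposición~\ref{proposition:cat-aplicacao-invariante-homotopico} is not actually needed at all, since $\mathrm{cat}(1_X)=\mathrm{cat}(X)$ holds by definition rather than by homotopy invariance.
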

\begin{proof}
 Por la Proposición~\ref{proposition:cat-aplicacao-invariante-homotopico}, tenemos que $\mathrm{cat}(1_X)=\mathrm{cat}(g\circ f)$ y $\mathrm{cat}(1_Y)=\mathrm{cat}(f\circ g)$. Por la Proposición~\ref{proposition:cat-composta-equivalencia-homotopica}, sigue que $\mathrm{cat}(g\circ f)=\mathrm{cat}(f)$ (ya que $g$ es una equivalencia homotópica) y $\mathrm{cat}(g\circ f)=\mathrm{cat}(g)$ (ya que $f$ es una equivalencia homotópica). Análogamente, nuevamente por la Proposición~\ref{proposition:cat-composta-equivalencia-homotopica}, sigue que $\mathrm{cat}(f\circ g)=\mathrm{cat}(f)$ (ya que $g$ es una equivalencia homotópica). Por lo tanto, $\mathrm{cat}(f)=\mathrm{cat}(g)=\mathrm{cat}(X)=\mathrm{cat}(Y)$. 
\end{proof}

El siguiente ejemplo muestra que $\mathrm{cat}(X)=\mathrm{cat}\left(\mathrm{Map}([0,1],X)\right)$.

\begin{example}
    Sea $X$ un espacio topológico. Considere las aplicaciones $\psi:X\to\mathrm{Map}([0,1],X),~\psi(x)=\overline{x}$, donde $\overline{x}$ es el camino constante en $x$, y la aplicación $\varphi:\mathrm{Map}([0,1],X)\to X,~\varphi(\alpha)=\alpha(0)$. Note que, $\varphi\circ\psi=1_X$ y $\psi\circ\varphi\simeq 1_{\mathrm{Map}([0,1],X)}$ (de hecho, podemos considerar la homotopía $H:\mathrm{Map}([0,1],X)\times [0,1]\to \mathrm{Map}([0,1],X),~H(\alpha,t)(s)=\alpha(ts)$). Así, $\psi$ es una equivalencia homotópica con inversa homotópica $\varphi$. Luego, aplicando la Proposición~\ref{proposition:cat-map-inversa-homotopica}, obtenemos que $\mathrm{cat}(\varphi)=\mathrm{cat}(\psi)=\mathrm{cat}(X)=\mathrm{cat}\left(\mathrm{Map}([0,1],X)\right)$.  
\end{example}

De \cite[Definition 2.1, p. 268]{berstein1962} recordemos la noción de categoría seccional o género de una aplicación. Inicialmente este invariante numérico fue estudiado por Schwarz en \cite{schwarz1966} para fibraciones.

\begin{definition}\label{definition:secat-def}
   Sea $f:X\to Y$ una aplicación continua. La \textit{categoría seccional} de $f$, denotada por $\mathrm{secat}(f)$, es el menor entero positivo $k$ tal que existen $U_1,\ldots,U_k$ abiertos de $Y$ tales que $Y=U_1\cup\cdots\cup U_k$ y para cada $U_j$ existe una aplicación continua $s_j:U_j\to X$ con $f\circ s_j\simeq incl_{U_j}$, donde $incl_{U_j}:U_j\hookrightarrow Y$ es la aplicación inclusión.   
\end{definition}

Note que $\mathrm{secat}(f)=1$ si, y solamente si, existe una aplicación continua $s:Y\to X$ tal que $f\circ s\simeq 1_Y$. Además, secat es un invariante homotópico, o sea, si $f\simeq g$ entonces $\mathrm{secat}(f)=\mathrm{secat}(g)$ (ver \cite[p. 268]{berstein1962}). Adicionalmente, si $Y$ es conexo por caminos, entonces $\mathrm{secat}(f)\leq\mathrm{cat}(Y)$ (ver \cite[p. 268]{berstein1962}). 

\medskip El siguiente resultado muestra que la categoría de una aplicación junto con su categoría seccional superan a la categoría de su contradominio.

\begin{theorem}\label{theorem:cat-secat-maior-cat}
 Sea $f:X\to Y$ una aplicación continua. Tenemos \[\mathrm{cat}(f)\cdot\mathrm{secat}(f)\geq\mathrm{cat}(Y).\]   
\end{theorem}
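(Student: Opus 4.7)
El plan es probar la desigualdad acotando $\mathrm{cat}(Y)$ por el producto $\mathrm{cat}(f)\cdot\mathrm{secat}(f)$ construyendo explícitamente un cubrimiento abierto adecuado de $Y$. Pongamos $n=\mathrm{cat}(f)$ y $k=\mathrm{secat}(f)$ (el caso en que alguno es infinito es trivial). Entonces disponemos de dos estructuras: un cubrimiento abierto $U_1,\ldots,U_n$ de $X$ tal que cada $f_{|U_i}:U_i\to Y$ es nulo homotópica, y un cubrimiento abierto $V_1,\ldots,V_k$ de $Y$ junto con secciones homotópicas $s_j:V_j\to X$ que cumplen $f\circ s_j\simeq \mathrm{incl}_{V_j}$.

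Lo natural es amalgamar ambas descomposiciones vía los preimágenes. Defino
\[
W_{ij}=s_j^{-1}(U_i)\subset V_j,\qquad i=1,\ldots,n,\ j=1,\ldots,k.
\]
Cada $W_{ij}$ es abierto en $V_j$ por continuidad de $s_j$, y por tanto abierto en $Y$. Como los $U_i$ cubren $X$, para cada $j$ los conjuntos $\{W_{ij}\}_{i}$ cubren $V_j$, y como los $V_j$ cubren $Y$, la familia $\{W_{ij}\}_{i,j}$ es un cubrimiento abierto de $Y$ con $nk$ elementos.

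El punto clave es mostrar que cada inclusión $\mathrm{incl}_{W_{ij}}:W_{ij}\hookrightarrow Y$ es nulo homotópica. Restringiendo la homotopía $f\circ s_j\simeq \mathrm{incl}_{V_j}$ al subconjunto $W_{ij}\subset V_j$ obtenemos, por el Corolario~\ref{corollary:homo-igual},
\[
\mathrm{incl}_{W_{ij}}\simeq (f\circ s_j)_{|W_{ij}}=f\circ (s_j)_{|W_{ij}}.
\]
Ahora bien, por la definición de $W_{ij}$, la aplicación $(s_j)_{|W_{ij}}$ factoriza a través de $U_i$, es decir, existe $\widetilde{s}_j:W_{ij}\to U_i$ con $(s_j)_{|W_{ij}}=\mathrm{incl}_{U_i}\circ\widetilde{s}_j$. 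Luego
\[
f\circ (s_j)_{|W_{ij}}=(f\circ\mathrm{incl}_{U_i})\circ\widetilde{s}_j=f_{|U_i}\circ\widetilde{s}_j,
\]
que es nulo homotópica por la Proposición~\ref{proposition:o-composta}, dado que $f_{|U_i}$ lo es. Por transitividad de $\simeq$ (Proposición~\ref{proposition:relacion-homotopia}) concluimos que $\mathrm{incl}_{W_{ij}}$ es nulo homotópica.

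Con esto, $\{W_{ij}\}_{i,j}$ es un cubrimiento abierto de $Y$ por abiertos categóricos con $nk$ elementos, de modo que $\mathrm{cat}(Y)\leq nk=\mathrm{cat}(f)\cdot\mathrm{secat}(f)$. El único paso potencialmente delicado es el manejo cuidadoso de la homotopía $f\circ s_j\simeq\mathrm{incl}_{V_j}$ al restringirla a $W_{ij}$ y verificar que el dominio intermedio de la factorización cae efectivamente en $U_i$; todo lo demás es rutinario una vez escogido el cubrimiento producto $\{s_j^{-1}(U_i)\}$.
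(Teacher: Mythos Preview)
Tu demostración es correcta y sigue esencialmente el mismo camino que la del artículo: defines el cubrimiento $W_{ij}=s_j^{-1}(U_i)$ (que el artículo llama $V_{j,i}$) y verificas que cada inclusión $W_{ij}\hookrightarrow Y$ es nulo homotópica factorizándola como $f_{|U_i}\circ\widetilde{s}_j$ tras restringir la homotopía $f\circ s_j\simeq \mathrm{incl}_{V_j}$. Incluso añades detalles que el artículo omite, como el tratamiento del caso infinito y la justificación de que $W_{ij}$ es abierto en $Y$ (no sólo en $V_j$).
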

\begin{proof}
  Sean $\mathrm{cat}(f)=n$ y $\mathrm{secat}(f)=m$. Consideremos $X=U_1\cup\cdots\cup U_n$ con cada $U_j$ abierto de $X$ tal que cada restricción $f_{| U_j}:U_j\to Y$ es nulo homotópica. También, consideremos $Y=V_1\cup\cdots\cup V_m$ con cada $V_i$ abierto de $Y$ tal que existe una aplicación continua $s_i:V_i\to X$ con $f\circ s_i\simeq incl_{V_i}$. Para cada $i\in \{1,\ldots,m\}$ y $j\in \{1,\ldots,n\}$, defina $V_{j,i}=s_i^{-1}(U_j)$. Note que cada $V_{j,i}$ es abierto de $Y$ y $Y=\bigcup_{j=1,i=1}^{n,m} V_{j,i}$. Además, \begin{align*}
      incl_{V_{j,i}}&=incl_{V_i}\circ incl_{V_{j,i}}^V& \mbox{ ($incl_{V_{j,i}}^{V_i}:V_{j,i}\hookrightarrow V_i$)}\\
      &\simeq (f\circ s_i)\circ incl_{V_{j,i}}^{V_i}\\
      &=f\circ (s_i\circ incl_{V_{j,i}}^{V_i})\\
      &=f\circ (incl_{U_j}\circ (s_i)_{|})& \mbox{($(s_i)_{|}:V_{j,i}\to U_j,~(s_i)_{|}(x)=(s_i)(x)$)}\\
      &=(f\circ incl_{U_j})\circ (s_i)_{|}\\
      &=f_{| U_j}\circ (s_i)_{|}.
  \end{align*}   Así, cada $incl_{V_{j,i}}$ es nulo homotópica, ya que $f_{| U_j}$ es nulo homotópica. Luego, $\mathrm{cat}(Y)\leq nm=\mathrm{cat}(f)\cdot\mathrm{secat}(f)$.
\end{proof}

Teorema~\ref{theorem:cat-secat-maior-cat} junto la Proposición~\ref{proposition:cat-catLS} implican la siguiente afirmación.

\begin{proposition}\label{proposition:secat-1-cat-igual-catLS}
  Sea $f:X\to Y$ una aplicación continua. Si $\mathrm{secat}(f)=1$ entonces $\mathrm{cat}(f)=\mathrm{cat}(Y)$.   
\end{proposition}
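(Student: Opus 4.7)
El plan es obtener la igualdad $\mathrm{cat}(f)=\mathrm{cat}(Y)$ como dos desigualdades, cada una proveniente de uno de los resultados mencionados justo antes del enunciado.

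Primero, aplicaré el Teorema~\ref{theorem:cat-secat-maior-cat} a la hipótesis $\mathrm{secat}(f)=1$. Como $\mathrm{cat}(f)\cdot \mathrm{secat}(f)\geq \mathrm{cat}(Y)$ y $\mathrm{secat}(f)=1$, obtendré inmediatamente que $\mathrm{cat}(f)\geq \mathrm{cat}(Y)$. Esta es la desigualdad menos inmediata a primera vista, pero el trabajo duro ya fue hecho al demostrar el Teorema~\ref{theorem:cat-secat-maior-cat}, así que aquí sólo se reduce a sustituir.

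Segundo, invocaré la Proposición~\ref{proposition:cat-catLS}, que garantiza $\mathrm{cat}(f)\leq \min\{\mathrm{cat}(X),\mathrm{cat}(Y)\}\leq \mathrm{cat}(Y)$, sin necesidad de usar nada sobre $X$ ni sobre la sección homotópica.

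Combinando ambas desigualdades concluyo $\mathrm{cat}(f)=\mathrm{cat}(Y)$. No anticipo obstáculo serio: la prueba es una aplicación formal y breve de los dos resultados citados. Lo único a cuidar es enunciar correctamente la hipótesis $\mathrm{secat}(f)=1$ en la cota del Teorema~\ref{theorem:cat-secat-maior-cat} y observar que no se requiere hipótesis adicional sobre $Y$ (por ejemplo, conexidad por caminos) para esta implicación, ya que todo se deduce de desigualdades numéricas entre invariantes ya definidos.
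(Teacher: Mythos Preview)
Your proposal is correct and follows exactly the route the paper indicates: the sentence preceding the proposition states that it is implied by Teorema~\ref{theorem:cat-secat-maior-cat} together with Proposici\'on~\ref{proposition:cat-catLS}, which is precisely your two-inequality argument. There is nothing to add.
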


Note que, la vuelta de la Proposición~\ref{proposition:secat-1-cat-igual-catLS} no vale.

\medskip Recordemos que $\mathrm{Conf}(X,2)=\{(x_1,x_2)\in X\times X:~x_1\neq x_2\}$ denota el espacio de configuraciones ordenado de dos puntos en $X$. Consideremos $\pi^X:\mathrm{Conf}(X,2)\to X,~\pi^X(x_1,x_2)=x_1$, la proyección en la primera coordenada. Note que $\mathrm{cat}(\pi^X)=1$ para todo $X$ espacio topológico contráctil. Por otro lado, tenemos el siguiente resultado.

\begin{proposition}\label{proposition:cat-proyection-section}
Si $\pi^X$ admite una sección continua entonces $\mathrm{cat}(\pi^X)=\mathrm{cat}(X)$. 
\end{proposition}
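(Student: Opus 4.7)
\emph{Plan de demostración.} El plan es reducir la afirmación a la Proposición~\ref{proposition:secat-1-cat-igual-catLS}, ya que ahora podemos controlar $\mathrm{secat}(\pi^X)$ mediante la hipótesis. Primero observaría que, por hipótesis, existe una aplicación continua $s:X\to\mathrm{Conf}(X,2)$ con $\pi^X\circ s=1_X$; en particular $\pi^X\circ s\simeq 1_X$ (por la reflexividad de $\simeq$, ver la Proposición~\ref{proposition:relacion-homotopia}).

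Segundo, usaría directamente la Definición~\ref{definition:secat-def}. Tomando el cubrimiento trivial $U_1=X$ y la sección continua $s_1=s$, se verifica que $\pi^X\circ s_1\simeq incl_{U_1}=1_X$, y por lo tanto $\mathrm{secat}(\pi^X)\leq 1$. Como, por definición, la categoría seccional es un entero positivo, concluyo que $\mathrm{secat}(\pi^X)=1$.

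Finalmente, aplicaría la Proposición~\ref{proposition:secat-1-cat-igual-catLS} a la aplicación continua $\pi^X:\mathrm{Conf}(X,2)\to X$: dado que su categoría seccional es $1$ y su codominio es $X$, se obtiene inmediatamente $\mathrm{cat}(\pi^X)=\mathrm{cat}(X)$, que es exactamente lo que se quería.

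Todo el argumento es esencialmente una aplicación directa de resultados ya establecidos; el único punto que conviene redactar con cuidado es la verificación de que una sección continua (una igualdad $\pi^X\circ s=1_X$) basta para garantizar $\mathrm{secat}(\pi^X)=1$, que solamente exige la homotopía $\pi^X\circ s\simeq 1_X$. No anticipo ninguna dificultad técnica real; el resultado es, en efecto, un corolario inmediato del Teorema~\ref{theorem:cat-secat-maior-cat} vía la Proposición~\ref{proposition:secat-1-cat-igual-catLS}.
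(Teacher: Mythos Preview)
Your proposal is correct and follows exactly the paper's approach: the paper's proof is the single line ``Aplicar la Proposición~\ref{proposition:secat-1-cat-igual-catLS}'', and your argument simply spells out the implicit step that a continuous section yields $\mathrm{secat}(\pi^X)=1$ before invoking that proposition.
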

\begin{proof}
Aplicar la Proposición~\ref{proposition:secat-1-cat-igual-catLS}. 
\end{proof}

\begin{example}
    \noindent\begin{enumerate}
        \item[(1)] La aplicación $s:S^n\to \mathrm{Conf}(S^n,2),~s(x)=(x,-x)$, es una sección continua de $\pi^{S^n}:\mathrm{Conf}(S^n,2)\to S^n$. Luego, por la Proposición~\ref{proposition:cat-proyection-section}, tenemos que $\mathrm{cat}\left(\pi^{S^n}\right)=2$, ya que $\mathrm{cat}(S^n)=2$.
        \item[(2)] Sea $G$ un grupo topológico con por lo menos dos elementos. Tenemos que $\mathrm{cat}(\pi^G)=\mathrm{cat}(G)$. De hecho, como $G$ tiene al menos dos elementos, podemos tomar un elemento $h\in G\setminus\{e\}$, donde $e$ es el elemento neutro del grupo $G$. Luego, la aplicación $s:G\to \mathrm{Conf}(G,2),~s(g)=(g,gh)$, es una sección continua de $\pi^G$. Nuevamente, por la Proposición~\ref{proposition:cat-proyection-section}, tenemos que $\mathrm{cat}(\pi^G)=\mathrm{cat}(G)$.   
    \end{enumerate}
\end{example}

No conocemos un ejemplo de espacio topológico $X$ tal que $\mathrm{cat}(\pi^X)<\mathrm{cat}(X)$. Así, planteamos el siguiente problema.

\begin{problem}
  Sea $X$ un espacio topológico. ¿Es verdad que $\mathrm{cat}(\pi^X)=\mathrm{cat}(X)$?  
\end{problem}

En particular, planteamos el siguiente problema.

\begin{problem}
  ¿Es verdad que $\mathrm{cat}(\pi^{\mathbb{R}P^{2n}})=\mathrm{cat}(\mathbb{R}P^{2n})$?  
\end{problem}

Más generalmente, planteamos el siguiente problema.

\begin{problem}
  Sea $X$ un espacio topológico. Calcular $\mathrm{cat}(\pi^X_{k,r})$, donde $\pi^X_{k,r}:\mathrm{Conf}(X,k)\to\mathrm{Conf}(X,r),~\pi^X_{k,r}(x_1,\ldots,x_k)=(x_1,\ldots,x_r)$, es la proyección en las $r$ primeras coordenadas ($r\leq k$). Además, $\mathrm{Conf}(X,k)=\{(x_1,\ldots,x_k)\in X^k:~x_i\neq x_j \text{ para todo $i\neq j$}\}$ denota el \textit{espacio de configuraciones ordenado}\index{Espacio de configuraciones ordenado} de $k$ puntos en $X$.   
\end{problem}

Ver \cite{zapata2020} para algunos resultados parciales de $\mathrm{secat}(\pi^X_{k,r})$.

\section{Aplicaciones en análisis no lineal}\label{chap:aplicaciones-analise}
En este capítulo usaremos la teoría de categoría de una aplicación para estudiar la existencia de soluciones de ecuaciones no lineales. En la Sección~\ref{sec:ecuaciones-no-lineales} presentamos una conexión entre la existencia de soluciones de una ecuación no lineal y homotopía. En la Sección~\ref{sec:ejemplos-concretos} presentamos una serie de ejemplos del uso de categoría en la existencia de soluciones de ecuaciones no lineales. 

\subsection{Ecuaciones no lineales}\label{sec:ecuaciones-no-lineales}
En esta sección se presenta el problema clásico de análisis no lineal, en particular de la existencia de soluciones, el cual pretendemos resolver usando la teoría de categoría de una aplicación. Una conexión entre la existencia de soluciones de una ecuación no lineal y la categoría de una aplicación está dada en el Teorema~\ref{theorem:extension-cero-cat-2}. En particular, el  Corolario~\ref{corollary:cat-2-solucion} presenta una condición en términos de categoría para la existencia de soluciones de una ecuación no lineal. Proposición~\ref{proposition:centro-radio-cualquer} dice que podemos considerar cualquier disco cerrado de cualquier radio y centrado en cualquier punto. Una condición en términos de categoría seccional para la existencia de soluciones está dada en la Proposición~\ref{proposition:secat-existencia-soluciones}.

\medskip Un problema clásico en análisis es resolver ecuaciones no lineales de la forma \begin{equation}\label{equation}
      F(x)=0,
  \end{equation} donde $F:D^n\to \mathbb{R}^m$ es una aplicación continua del disco unitario cerrado $D^n\subset\mathbb{R}^n$ en $\mathbb{R}^m$. Note que, si existe $x\in \partial D^n=S^{n-1}$ tal que $F(x)=0$, tenemos inmediatamente que la ecuación~(\ref{equation}) admite solución. Así que, en adelante vamos a suponer que $F(x)\neq 0$, para cualquier $x\in \partial D^n=S^{n-1}$ y consideraremos $F_\mid:S^{n-1}\to\mathbb{R}^m-\{0\}$ como la aplicación restricción.

\medskip Para fines de esta sección presentamos la siguiente definición.
  
  \begin{definition}
      Sean $X\subset Z$ y $Y\subset W$ espacios topológicos y $f:X\to Y$ una aplicación continua. Diremos que una aplicación continua $\varphi:Z\to W$ es una \textit{extensión}\index{Extensión} de $f$ cuando $\varphi(x)=f(x)$ para todo $x\in X$.
  \end{definition} 

  \begin{example}\label{F-extension}
  Sea $F:D^n\to \mathbb{R}^m$ una aplicación continua tal que $F(x)\neq 0$, para cualquier $x\in S^{n-1}$ y considere $F_\mid:S^{n-1}\to\mathbb{R}^m-\{0\}$ la aplicación restricción. Tenemos que $F$ es una extensión de $F_\mid$.   
  \end{example}
  
  \medskip En \cite[Theorem 1.1.1, p. 1]{nirenberg1974} se presentó la siguiente afirmación y su demostración fue dejada como ejercicio. Esta afirmación muestra una conexión entre la existencia de soluciones de ecuaciones y homotopía. En esta sección presentamos una demostración de esta afirmación. Sea $F:D^n\to \mathbb{R}^m$ una aplicación continua tal que $F(x)\neq 0$, para cualquier $x\in S^{n-1}$ y considere $F_\mid:S^{n-1}\to\mathbb{R}^m-\{0\}$ la aplicación restricción.

  \begin{theorem}\label{theorem:extension-cero-nulo-homo}
     Cada extensión $\varphi:D^n\to \mathbb{R}^m$ de $F_\mid$ admite un cero, o sea, existe $x\in D^n$ tal que $\varphi(x)=0$, si y solamente si la restricción $F_\mid$ no es nulo homotópica.   
  \end{theorem}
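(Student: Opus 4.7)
El plan es probar ambas direcciones por contrapositiva, usando que un cero de una extensión corresponde precisamente a un obstáculo para extender desde $S^{n-1}$ hasta $D^n$ con valores en $\mathbb{R}^m\setminus\{0\}$. En otras palabras, reformulo la equivalencia así: \emph{existe una extensión $\varphi:D^n\to\mathbb{R}^m$ de $F_\mid$ sin ceros} si y solamente si \emph{$F_\mid$ es nulo homotópica}. El punto clave es la \textit{correspondencia cono/homotopía}: aplicaciones $S^{n-1}\times [0,1]\to \mathbb{R}^m\setminus\{0\}$ con $H_1$ constante se identifican con aplicaciones $D^n\to \mathbb{R}^m\setminus\{0\}$ vía las \aspas{coordenadas polares} $x=(1-t)u$.

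Para la dirección ($\Leftarrow$) (contrapositiva): supongo que existe alguna extensión $\varphi:D^n\to\mathbb{R}^m$ de $F_\mid$ sin ceros, es decir, $\varphi$ toma valores en $\mathbb{R}^m\setminus\{0\}$. Defino entonces
\[
H:S^{n-1}\times [0,1]\to \mathbb{R}^m\setminus\{0\},\qquad H(u,t)=\varphi\!\left((1-t)u\right).
\]
La continuidad es clara porque $(u,t)\mapsto (1-t)u$ es continua en $D^n$ y $\varphi$ es continua. Además $H(u,0)=\varphi(u)=F_\mid(u)$ y $H(u,1)=\varphi(0)=:y_0$, que es una constante de $\mathbb{R}^m\setminus\{0\}$. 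Así $F_\mid\simeq\overline{y_0}$, o sea, $F_\mid$ es nulo homotópica.

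Para la dirección ($\Rightarrow$) (contrapositiva): supongo que $F_\mid$ es nulo homotópica, y construyo una extensión $\varphi:D^n\to \mathbb{R}^m\setminus\{0\}$ de $F_\mid$ (luego en particular sin ceros). Sea $G:S^{n-1}\times [0,1]\to \mathbb{R}^m\setminus\{0\}$ con $G_0=F_\mid$ y $G_1=\overline{y_0}$ para algún $y_0\neq 0$. Defino
\[
\varphi:D^n\to\mathbb{R}^m\setminus\{0\},\qquad \varphi(x)=\begin{cases} G\!\left(\dfrac{x}{\|x\|},\,1-\|x\|\right), & x\neq 0;\\ y_0, & x=0. \end{cases}
\]
En $S^{n-1}$ se tiene $\|x\|=1$, luego $\varphi(x)=G(x,0)=F_\mid(x)=F(x)$, de modo que $\varphi$ extiende a $F_\mid$. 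La continuidad en $D^n\setminus\{0\}$ sigue de que $x\mapsto (x/\|x\|,\,1-\|x\|)$ es continua allí y $G$ es continua. La única sutileza es la continuidad en $0$: como $G$ es continua sobre el compacto $S^{n-1}\times [0,1]$ es uniformemente continua, y como $\|x\|\to 0$ implica $1-\|x\|\to 1$, se sigue que $G(x/\|x\|,1-\|x\|)\to G(x/\|x\|,1)=y_0$ uniformemente en la dirección $x/\|x\|$; por lo tanto $\varphi(x)\to y_0=\varphi(0)$.

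El paso más delicado es justamente verificar la continuidad de $\varphi$ en el origen en la segunda dirección: aquí se usa de manera esencial que $G_1$ es \emph{constante}, porque $x/\|x\|$ no tiene límite cuando $x\to 0$ y solo el hecho de que la $t$-rebanada final sea una aplicación constante permite controlar el límite. El resto es rutinario a partir de la Proposición~\ref{proposition:relacion-homotopia} y de la compacidad de $S^{n-1}\times [0,1]$.
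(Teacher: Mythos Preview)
Tu demostración es correcta y sigue esencialmente la misma ruta que la del artículo: ambas direcciones se reducen a la correspondencia cono/homotopía, con $H(u,t)=\varphi((1-t)u)$ en un sentido y la nulo homotopía reparametrizada radialmente en el otro. La única diferencia menor es que el artículo define la extensión como constante en $\{\|x\|\leq 1/2\}$ y usa la homotopía solo en el anillo $\{1/2\leq\|x\|\leq 1\}$, con lo cual la continuidad en el origen es trivial y se evita tu argumento de continuidad uniforme (que de todos modos es válido).
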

  \begin{proof}
   $(\Rightarrow)$ Por contradicción. Supongamos que $F_\mid$ es nulo homotópica y considere una nulo homotopía $H:S^{n-1}\times [0,1]\to \mathbb{R}^{m}-\{0\}$ con $H_0=F_\mid$ y $H_1=\overline{c}$, para alguna constante $c\in \mathbb{R}^{m}-\{0\}$. Defina la aplicación $\varphi:D^n\to \mathbb{R}^m$ por:
   \[\varphi(x)=\begin{cases}
 c, &\hbox{ si $0\leq\parallel x\parallel\leq 1/2$;}\\
 H\left(\dfrac{x}{\parallel x\parallel},2-2\parallel x\parallel\right), &\hbox{ si $1/2\leq\parallel x\parallel\leq 1$.} 
\end{cases}
\] Note que $\varphi$ es una extensión de $F_\mid$ y $\varphi(x)\neq 0$ for any $x\in D^n$. Lo cual es una contradicción. Por lo tanto, $F_\mid$ no es nulo homotópica.

$(\Leftarrow)$ Por contradicción. Supongamos que existe una extensión $\varphi:D^n\to \mathbb{R}^m$ de $F_\mid$ tal que $\varphi(x)\neq 0$, para cualquier $x\in D^n$. Considere la homotopia $H:S^{n-1}\times [0,1]\to \mathbb{R}^m-\{0\}$ dada por 
\[H(x,t)=\varphi\left((1-t)x\right).
\] Note que $H$ cumple que $H_0=F_\mid$ y $H_1=\overline{c}$, donde $c=\varphi\left(0\right)\in \mathbb{R}^m-\{0\}$. Luego, $F_\mid$ es nulo homotópica. Lo cual es una contradicción. Por lo tanto, cada extensión $\varphi:D^n\to \mathbb{R}^m$ de $F_\mid$ admite un cero.
  \end{proof}

Teorema~\ref{theorem:extension-cero-nulo-homo} implica el siguiente resultado que muestra una condición homotópica para la existencia de solución de una ecuación.  

  \begin{corollary}\label{corollary:no-nulo-solucion}
 Sea $F:D^n\to \mathbb{R}^m$ una aplicación continua tal que $F(x)\neq 0$, para cualquier $x\in S^{n-1}$ y considere $F_\mid:S^{n-1}\to\mathbb{R}^m-\{0\}$ la aplicación restricción. Si la restricción $F_\mid$ no es nulo homotópica entonces la ecuación $F(x)=0$ admite solución.   
  \end{corollary}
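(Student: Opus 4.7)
El plan es que este corolario siga casi inmediatamente del Teorema~\ref{theorem:extension-cero-nulo-homo}, usando la observación del Ejemplo~\ref{F-extension} de que la propia aplicación $F$ es una extensión de su restricción $F_\mid$. Lo único que hay que traducir es la afirmación bi-condicional del teorema a una implicación sobre una extensión particular.

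Primero observaría que, por definición, $F(x)=F_\mid(x)$ para todo $x\in S^{n-1}$, y por hipótesis $F(x)\neq 0$ para tales $x$, de modo que $F:D^n\to\mathbb{R}^m$ es efectivamente una extensión (en el sentido de la definición introducida en esta sección) de $F_\mid:S^{n-1}\to\mathbb{R}^m\setminus\{0\}$. Esto es exactamente lo señalado en el Ejemplo~\ref{F-extension}.

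Enseguida, asumiendo la hipótesis de que $F_\mid$ no es nulo homotópica, aplicaría la implicación $(\Leftarrow)$ del Teorema~\ref{theorem:extension-cero-nulo-homo}, que garantiza que \emph{toda} extensión $\varphi:D^n\to\mathbb{R}^m$ de $F_\mid$ admite un cero. Particularizando al caso $\varphi=F$, obtendría la existencia de algún $x\in D^n$ con $F(x)=0$, lo cual es precisamente una solución de la ecuación $F(x)=0$.

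No anticipo ningún obstáculo técnico: el argumento es puramente una instancia del teorema previo, y la única sutileza a tener en cuenta es verificar (de forma explícita) que los codominios coinciden adecuadamente, es decir, que $F$ vista como aplicación a $\mathbb{R}^m$ extiende a $F_\mid$ vista como aplicación a $\mathbb{R}^m\setminus\{0\}$, lo cual es inmediato pues $F_\mid$ coincide con la restricción de $F$ sobre $S^{n-1}$ y allí $F$ no se anula. Por ello el \textbf{proof} consistirá esencialmente en una sola línea invocando el teorema anterior y el Ejemplo~\ref{F-extension}.
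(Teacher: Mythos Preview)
Tu propuesta es correcta y coincide exactamente con el argumento del artículo: aplicar el Teorema~\ref{theorem:extension-cero-nulo-homo} junto con el Ejemplo~\ref{F-extension} para concluir que la extensión particular $\varphi=F$ admite un cero. La única observación adicional que hace el artículo es que tal cero satisface $x\notin S^{n-1}$, lo cual es inmediato por la hipótesis $F(x)\neq 0$ en $S^{n-1}$.
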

  \begin{proof}
      Aplicando el Teorema~\ref{theorem:extension-cero-nulo-homo} junto con el Ejemplo~\ref{F-extension}, obtenemos que existe $x\in D^n$ ($x\notin S^{n-1}$) tal que $F(x)=0$.
  \end{proof}

  Usando la Proposición~\ref{proposition:cat-no-nulo-2} podemos expresar el Teorema~\ref{theorem:extension-cero-nulo-homo} en términos de categoría.

  \begin{theorem}\label{theorem:extension-cero-cat-2}
 Sea $F:D^n\to \mathbb{R}^m$ una aplicación continua tal que $F(x)\neq 0$, para cualquier $x\in S^{n-1}$ y considere $F_\mid:S^{n-1}\to\mathbb{R}^m-\{0\}$ la aplicación restricción. Cada extensión $\varphi:D^n\to \mathbb{R}^m$ de $F_\mid$ admite un cero, o sea, existe $x\in D^n$ tal que $\varphi(x)=0$, si y solamente si $\mathrm{cat}(F_\mid)=2$.   
  \end{theorem}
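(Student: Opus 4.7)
El plan es tratar el enunciado como una reformulación directa del Teorema~\ref{theorem:extension-cero-nulo-homo} traducida al lenguaje de categoría de una aplicación mediante la Proposición~\ref{proposition:cat-no-nulo-2}. No veo la necesidad de realizar ninguna construcción nueva: bastará con encadenar dos equivalencias ya establecidas en el texto, exactamente como sugiere el comentario que antecede al enunciado.

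Primero, observaría que la condición del lado izquierdo —que cada extensión $\varphi:D^n\to\mathbb{R}^m$ de $F_\mid$ admita un cero— es literalmente la afirmación del Teorema~\ref{theorem:extension-cero-nulo-homo}; por ese teorema ella equivale a que la restricción $F_\mid$ no sea nulo homotópica. Segundo, observaría que la condición del lado derecho —que $\mathrm{cat}(F_\mid)=2$— se caracteriza por la Proposición~\ref{proposition:cat-no-nulo-2}: como $F_\mid:S^{n-1}\to\mathbb{R}^m\setminus\{0\}$ tiene por dominio la esfera $S^{n-1}$, dicha proposición se aplica directamente y nos dice que $\mathrm{cat}(F_\mid)=2$ si y solamente si $F_\mid$ no es nulo homotópica. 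Concatenando las dos equivalencias se obtiene la tesis.

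No preveo ningún obstáculo real en esta prueba, pues el trabajo sustancial ya fue realizado: por un lado, el Teorema~\ref{theorem:extension-cero-nulo-homo} ya construyó explícitamente las dos direcciones (la extensión por cono a partir de una nulo homotopía, y la contracción radial $H(x,t)=\varphi((1-t)x)$ en el sentido contrario); por otro lado, el papel crítico de la hipótesis \emph{esfera como dominio} quedó absorbido en la Proposición~\ref{proposition:cat-no-nulo-2}, donde se invoca el Corolario~\ref{corollary:cat-do-codo-esfera} para acotar $\mathrm{cat}(F_\mid)\leq 2$, dejando como únicas posibilidades $1$ o $2$. En consecuencia, la prueba se reduce esencialmente a una línea: \[\text{(cada extensión tiene un cero)} \;\Longleftrightarrow\; \text{($F_\mid$ no es nulo homotópica)} \;\Longleftrightarrow\; \mathrm{cat}(F_\mid)=2.\]
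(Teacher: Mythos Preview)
Tu propuesta es correcta y coincide exactamente con el enfoque del artículo: el teorema se presenta allí sin demostración explícita, precedido por la frase ``Usando la Proposición~\ref{proposition:cat-no-nulo-2} podemos expresar el Teorema~\ref{theorem:extension-cero-nulo-homo} en términos de categoría'', es decir, la misma concatenación de equivalencias que tú describes.
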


  En particular, el Corolario~\ref{corollary:no-nulo-solucion} en términos de categoría queda dado de la siguiente manera.

   \begin{corollary}\label{corollary:cat-2-solucion}
 Sea $F:D^n\to \mathbb{R}^m$ una aplicación continua tal que $F(x)\neq 0$, para cualquier $x\in S^{n-1}$ y considere $F_\mid:S^{n-1}\to\mathbb{R}^m-\{0\}$ la aplicación restricción. Si $\mathrm{cat}(F_\mid)=2$  entonces la ecuación $F(x)=0$ admite solución.   
  \end{corollary}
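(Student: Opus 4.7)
El plan es aplicar directamente el Teorema~\ref{theorem:extension-cero-cat-2}, el cual garantiza que la hipótesis $\mathrm{cat}(F_\mid)=2$ es equivalente a que toda extensión continua $\varphi:D^n\to\mathbb{R}^m$ de $F_\mid$ admita un cero. Así, bastará exhibir una extensión específica cuya existencia de un cero en $D^n$ nos entregue una solución de la ecuación $F(x)=0$.

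Primero, observaría que, por el Ejemplo~\ref{F-extension}, la propia aplicación $F$ es una extensión de $F_\mid$, ya que $F$ está definida en todo $D^n$ y, por construcción, $F(x)=F_\mid(x)$ para todo $x\in S^{n-1}$. Segundo, aplicando el Teorema~\ref{theorem:extension-cero-cat-2} con la hipótesis $\mathrm{cat}(F_\mid)=2$, concluiría que toda extensión de $F_\mid$ tiene un cero; en particular, $F$ mismo lo tiene, es decir, existe $x\in D^n$ con $F(x)=0$, que es la solución buscada.

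No anticipo ningún obstáculo técnico: el enunciado es un corolario inmediato del Teorema~\ref{theorem:extension-cero-cat-2}, en completa analogía con el Corolario~\ref{corollary:no-nulo-solucion}, donde la misma idea se ejecuta usando el Teorema~\ref{theorem:extension-cero-nulo-homo}. Alternativamente, se podría pasar primero por la Proposición~\ref{proposition:cat-no-nulo-2} (cuya aplicabilidad se justifica porque el codominio de $F_\mid$ contiene $S^{m-1}$, pero más directamente, porque $F_\mid$ tiene por dominio la esfera $S^{n-1}$) para traducir $\mathrm{cat}(F_\mid)=2$ en que $F_\mid$ no es nulo homotópica, e invocar luego el Corolario~\ref{corollary:no-nulo-solucion}; el resultado es el mismo.
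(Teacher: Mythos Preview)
Tu propuesta es correcta y sigue esencialmente la misma ruta que el artículo: éste presenta el corolario como la reformulación en términos de categoría del Corolario~\ref{corollary:no-nulo-solucion} (vía la Proposición~\ref{proposition:cat-no-nulo-2} y el Teorema~\ref{theorem:extension-cero-cat-2}), y tu argumento de aplicar el Teorema~\ref{theorem:extension-cero-cat-2} a la extensión $F$ dada por el Ejemplo~\ref{F-extension} reproduce exactamente la demostración del Corolario~\ref{corollary:no-nulo-solucion} trasladada al lenguaje categórico.
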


  Corolario~\ref{corollary:cat-2-solucion} implica el siguiente resultado. El cual dice que podemos considerar cualquier disco cerrado de cualquier radio y centrado en cualquier punto. Para $x_0\in\mathbb{R}^n$ y $r>0$, el \textit{disco cerrado} en $\mathbb{R}^n$ de \textit{radio} $r$ y \textit{centro} $x_0$ está dado por $D^n_r(x_0)=\{x\in\mathbb{R}^n:~||x-x_0||\leq r\}$ y la \textit{esfera} en $\mathbb{R}^n$ de \textit{radio} $r$ y \textit{centro} $x_0$ está dada por $S^{n-1}_r(x_0)=\{x\in\mathbb{R}^n:~||x-x_0||= r\}$. 

  \begin{proposition}\label{proposition:centro-radio-cualquer}
  Sea $F:D^n_r(x_0)\to \mathbb{R}^m$ una aplicación continua tal que $F(x)\neq 0$, para cualquier $x\in S^{n-1}_r(x_0)$ y considere $F_\mid:S^{n-1}_r(x_0)\to\mathbb{R}^m-\{0\}$ la aplicación restricción. Si $\mathrm{cat}(F_\mid)=2$  entonces la ecuación $F(x)=0$ admite solución.      
  \end{proposition}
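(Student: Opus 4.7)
El plan es reducir la afirmación al caso ya resuelto del disco unitario $D^n$ centrado en el origen (Corolario~\ref{corollary:cat-2-solucion}) mediante un cambio afín de coordenadas que identifique $D^n_r(x_0)$ con $D^n$ y $S^{n-1}_r(x_0)$ con $S^{n-1}$, y verificar que la categoría de la restricción se preserva bajo este cambio.

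Primero, consideraría el homeomorfismo $\phi:D^n\to D^n_r(x_0)$ definido por $\phi(y)=x_0+ry$, cuya inversa es $\phi^{-1}(x)=(x-x_0)/r$, y observaría que $\phi$ restringe a un homeomorfismo $\phi_\mid:S^{n-1}\to S^{n-1}_r(x_0)$. A continuación, definiría $G=F\circ \phi:D^n\to \mathbb{R}^m$. Como $\phi$ envía $S^{n-1}$ biyectivamente sobre $S^{n-1}_r(x_0)$, la hipótesis $F(x)\neq 0$ para todo $x\in S^{n-1}_r(x_0)$ se traduce en $G(y)\neq 0$ para todo $y\in S^{n-1}$, y la restricción correspondiente satisface $G_\mid=F_\mid\circ \phi_\mid:S^{n-1}\to\mathbb{R}^m\setminus\{0\}$.

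El paso crucial consiste en verificar que $\mathrm{cat}(G_\mid)=\mathrm{cat}(F_\mid)=2$. Esto se deduce directamente de la Proposición~\ref{proposition:cat-composta-equivalencia-homotopica}, Ítem (1): como $\phi_\mid$ es un homeomorfismo, su inversa $h=(\phi_\mid)^{-1}$ satisface $\phi_\mid\circ h=1_{S^{n-1}_r(x_0)}\simeq 1_{S^{n-1}_r(x_0)}$, y por tanto $\mathrm{cat}(F_\mid\circ \phi_\mid)=\mathrm{cat}(F_\mid)=2$, es decir, $\mathrm{cat}(G_\mid)=2$.

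Finalmente, aplicando el Corolario~\ref{corollary:cat-2-solucion} a la aplicación $G:D^n\to\mathbb{R}^m$, obtendría un $y_\ast\in D^n$ con $G(y_\ast)=0$. Tomando $x_\ast=\phi(y_\ast)=x_0+ry_\ast\in D^n_r(x_0)$, concluiría que $F(x_\ast)=0$, resolviendo la ecuación. No preveo ningún obstáculo técnico serio; el único punto que requiere cierto cuidado es aplicar correctamente la Proposición~\ref{proposition:cat-composta-equivalencia-homotopica} para trasladar la igualdad de categorías bajo la composición con el homeomorfismo $\phi_\mid$, pero esto es inmediato al tomar como inversa homotópica al propio homeomorfismo inverso.
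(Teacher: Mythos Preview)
Tu propuesta es correcta y sigue esencialmente la misma estrategia que la demostración del artículo: trasladar el problema al disco unitario mediante el cambio afín $\phi(y)=x_0+ry$, verificar que la categoría de la restricción se preserva al componer con el homeomorfismo $\phi_\mid$, y aplicar el Corolario~\ref{corollary:cat-2-solucion}. La única diferencia, puramente cosmética, es que el artículo invoca la Proposición~\ref{proposition:cat-map-inversa-homotopica} para justificar $\mathrm{cat}(G_\mid)=\mathrm{cat}(F_\mid)$, mientras que tú citas directamente la Proposición~\ref{proposition:cat-composta-equivalencia-homotopica}(1), que de hecho es la referencia más ajustada para este paso.
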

  \begin{proof}
    Considere el homeomorfismo $\varphi:D^n_r(x_0)\to D^n,~\varphi(x)=\dfrac{1}{r}(x-x_0)$, cuya inversa es la aplicación $\psi:D^n\to D^n_r(x_0),~\psi(y)=ry+x_0$. Note que $\varphi(x)\in S^{n-1}$, para todo $x\in S^{n-1}_r(x_0)$, y $\psi(y)\in S^{n-1}_r(x_0)$, para todo $y\in S^{n-1}$. Considere la aplicación $G:D^n\to \mathbb{R}^m,~G(y)=F(\psi(y))=F(ry+x_0)$. Como $F(x)\neq 0$, para cualquier $x\in S^{n-1}_r(x_0)$, tenemos que $G(y)\neq 0$, para cualquier $y\in S^{n-1}$. Además,la aplicación restricción $G_{|}:S^{n-1}\to \mathbb{R}^m\setminus\{0\}$ cumple $G_{|}=F_{|}\circ \psi_{| S^{n-1}}$, donde $\psi_{| S^{n-1}}:S^{n-1}\to S^{n-1}_r(x_))$ es la aplicación restricción. Luego, por la Proposición~\ref{proposition:cat-map-inversa-homotopica}, se tiene que $\mathrm{cat}(G_|)=\mathrm{cat}(F_|)=2$. Así, por el Corolario~\ref{corollary:cat-2-solucion} tenemos que la ecuación $G(y)=0$ tiene solución, o sea, existe $y'\in D^{n}$ tal que $G(y')=0$. Luego, el punto $x'=\psi(y')=ry'+x_0\in D^n_r(x_0)$ es una solución de  la ecuación $F(x)=0$. 
  \end{proof}

  Note que, el regreso del Corolario~\ref{corollary:cat-2-solucion} no vale.

  \medskip La demostración del Teorema~\ref{theorem:extension-cero-nulo-homo} (equivalentemente Teorema~\ref{theorem:extension-cero-cat-2}) fue realizada por contradicción. En particular, la existencia de solución no es constructiva. Así, un problema interesante es el siguiente:

  \begin{problem}
   Sea $F:D^n\to \mathbb{R}^m$ una aplicación continua tal que $F(x)\neq 0$, para cualquier $x\in S^{n-1}$ y considere $F_\mid:S^{n-1}\to\mathbb{R}^m-\{0\}$ la aplicación restricción. Suponga que $\mathrm{cat}(F_\mid)=2$ y $\varphi:D^n\to \mathbb{R}^m$ es una extensión de $F_\mid$. Encuentre un método para construir un cero de $\varphi$, o sea, para encontrar un $x\in D^n$ tal que $\varphi(x)=0$.   
  \end{problem}

La siguiente observación muestra una limitación del Corolario~\ref{corollary:cat-2-solucion}.

  \begin{remark}\label{remark:deficiencia}
     Para $n<m$, por el \textit{Teorema de aproximación celular} (ver \cite[Theorem 4.8, p. 349]{hatcher2002}), se tiene que cualquier aplicación continua $f:S^{n-1}\to\mathbb{R}^{m-1}$ es nulo homotópica (ver \cite[Corollary 4.9, p. 349]{hatcher2002}), o sea, su categoría $\mathrm{cat}(f)=1$. En este caso el Corolario~\ref{corollary:cat-2-solucion} no será posible usarlo.  
  \end{remark}

  Para subsanar la deficiencia del Corolario~\ref{corollary:cat-2-solucion} presentada en la Observación~\ref{remark:deficiencia} se puede reemplazar el conjunto unitario $\{0\}$ por cualquier subespacio  no vacío $C\subset \mathbb{R}^m$. 

  \medskip Sean $C\subset \mathbb{R}^m$ un subconjunto no vacío y $F:D^n\to \mathbb{R}^m$ una aplicación continua tal que $F(x)\not\in C$, para cualquier $x\in S^{n-1}$ y considere $F_\mid:S^{n-1}\to\mathbb{R}^m-C$ la aplicación restricción. Suponga que $\mathrm{cat}(F_\mid)=2$. Se tiene que la ecuación $F(x)\in C$ admite solución, o sea, existe un $x_0\in D^n$ tal que $F(x_0)\in C$.

\medskip Por otro lado, en el Corolario~\ref{corollary:cat-2-solucion}, si cambiamos el $n$-disco $D^n$ por el disco infinito dimensional $D^\infty$ de $\mathbb{R}^\infty$ y la $(n-1)$-esfera $S^{n-1}$ por la esfera infinito-dimensional $S^\infty$, se tiene que cualquier aplicación continua $f:S^\infty\to \mathbb{R}^m\setminus\{0\}$ es nulo homotópica, o sea, $\text{cat}(f)=1$. Ya que, $S^\infty$ es contráctil, ver \cite[Example 1B.3, pg. 88]{hatcher2002} o \cite[Theorem 11.1.3, pg. 332]{aguilar2002}. Así, una versión análoga del Corolario~\ref{corollary:cat-2-solucion} en espacios de dimensión infinita no será posible usarlo. Un problema natural es extender el Corolario~\ref{corollary:cat-2-solucion} para espacio de dimensión infinita, de manera análoga como es extendida la teoría del grado topológico para espacios de dimensión infinita (ver \cite[Chapter 2]{nirenberg1974}).

  \begin{problem}
    Extienda el Corolario~\ref{corollary:cat-2-solucion} para espacios de dimensión infinita.  
  \end{problem}

  Usando la Proposición~\ref{proposition:secat-1-cat-igual-catLS} junto con la Proposición~\ref{proposition:centro-radio-cualquer} obtenemos una condición en términos de categoría seccional para la existencia de soluciones.

  \begin{proposition}\label{proposition:secat-existencia-soluciones}
      Sea $F:D^n_r(x_0)\to \mathbb{R}^m$ una aplicación continua tal que $F(x)\neq 0$, para cualquier $x\in S^{n-1}_r(x_0)$ y considere $F_\mid:S^{n-1}_r(x_0)\to\mathbb{R}^m-\{0\}$ la aplicación restricción. Si $\mathrm{secat}(F_\mid)=1$  entonces la ecuación $F(x)=0$ admite solución.  
  \end{proposition}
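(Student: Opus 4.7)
The plan is to chain together the two results suggested in the hint: Proposición~\ref{proposition:secat-1-cat-igual-catLS} and Proposición~\ref{proposition:centro-radio-cualquer}. The first converts the hypothesis $\mathrm{secat}(F_\mid)=1$ into information about $\mathrm{cat}(F_\mid)$, and the second turns that categorical information into the desired existence of a zero.

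First, I would invoke Proposición~\ref{proposition:secat-1-cat-igual-catLS} applied to the continuous map $F_\mid:S^{n-1}_r(x_0)\to \mathbb{R}^m\setminus\{0\}$. Since by hypothesis $\mathrm{secat}(F_\mid)=1$, that proposition yields the identity
\[
\mathrm{cat}(F_\mid)=\mathrm{cat}(\mathbb{R}^m\setminus\{0\}).
\]

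Next, I would compute the right-hand side. By Ítem (3) of Ejemplo~\ref{examples-retracts}, the radial map $x\mapsto x/\|x\|$ exhibits $S^{m-1}$ as a retracto por deformación of $\mathbb{R}^m\setminus\{0\}$; in particular $\mathbb{R}^m\setminus\{0\}\simeq S^{m-1}$. Applying Corolario~\ref{corollary:cat-space-invariante} (the homotopy invariance of LS category) together with Ítem (2) of Ejemplo~\ref{exam:catLS} gives
\[
\mathrm{cat}(\mathbb{R}^m\setminus\{0\})=\mathrm{cat}(S^{m-1})=2.
\]
Combining both displays yields $\mathrm{cat}(F_\mid)=2$, and Proposición~\ref{proposition:centro-radio-cualquer} then furnishes a solution of $F(x)=0$.

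There is no real obstacle in this argument; it is essentially a composition of three named results already available in the excerpt. The only point requiring a small amount of care is identifying the correct codomain when invoking Proposición~\ref{proposition:secat-1-cat-igual-catLS}: one must take $Y=\mathbb{R}^m\setminus\{0\}$ (so that $\mathrm{cat}(Y)=2$), since using the ambient target $\mathbb{R}^m$, which is contractil, would give the useless value $\mathrm{cat}(Y)=1$. The hypothesis $F(x)\neq 0$ on $S^{n-1}_r(x_0)$ is precisely what makes the restriction land in $\mathbb{R}^m\setminus\{0\}$, so the argument is internally consistent.
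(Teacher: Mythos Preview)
Your proof is correct and follows exactly the same route as the paper: apply Proposición~\ref{proposition:secat-1-cat-igual-catLS} to get $\mathrm{cat}(F_\mid)=\mathrm{cat}(\mathbb{R}^m\setminus\{0\})=2$, then invoke Proposición~\ref{proposition:centro-radio-cualquer}. The only difference is that you spell out the justification of $\mathrm{cat}(\mathbb{R}^m\setminus\{0\})=2$ via the deformation retraction onto $S^{m-1}$, whereas the paper states that equality without further comment.
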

  \begin{proof}
    Como $\mathrm{secat}(F_\mid)=1$, por la Proposición~\ref{proposition:secat-1-cat-igual-catLS}, tenemos que $\mathrm{cat}(F_\mid)=\mathrm{cat}(\mathbb{R}^m-\{0\})=2$. Así, aplicando la Proposición~\ref{proposition:centro-radio-cualquer}, sigue que la ecuación $F(x)=0$ admite solución. 
  \end{proof}

  \subsection{Ejemplos}\label{sec:ejemplos-concretos}
  En esta sección presentamos una serie de ejemplos del uso de categoría en la existencia de ecuaciones no lineales.
  
\begin{example}\label{opossite}
    Sea $F:D^n\to \mathbb{R}^n$ una aplicación continua tal que $F(x)$ nunca apunta en dirección opuesta a $x$ para cualquier $x\in S^{n-1}$, i.e., $F(x)\neq \lambda x$, para todo $\lambda< 0$, para todo $x\in S^{n-1}$. Entonces la ecuación \begin{align}\label{nver-opposite-point}
        F(x)&=0,
    \end{align} tiene solución. De hecho, podemos considerar la homotopía $H:S^{n-1}\times [0,1]\to \mathbb{R}^n-\{0\}$ dada por \[H(x,t)=(1-t)x+tF(x).\] 
    Note que, $H_0=incl_{S^{n-1}}$ y $H_1=F_\mid$, donde $incl_{S^{n-1}}:S^{n-1}\hookrightarrow \mathbb{R}^n-\{0\}$ es la aplicación inclusión. Del Ítem (3) del Ejemplo~\ref{examples-retracts} tenemos que la inclusión $incl_{S^{n-1}}:S^{n-1}\hookrightarrow \mathbb{R}^n-\{0\}$ es una equivalencia homotópica, luego, por la Proposición~\ref{proposition:cat-map-inversa-homotopica}, se tiene que $\mathrm{cat}(incl_{S^{n-1}})=\mathrm{cat}(S^{n-1})=2$ (para la última igualdad ver Ítem (2) del Ejemplo~\ref{exam:catLS}). Entonces \begin{align*}
        \mathrm{cat}(F_\mid)&=\mathrm{cat}(incl_{S^{n-1}}) & \mbox{(sigue de la Proposición~\ref{proposition:cat-aplicacao-invariante-homotopico})}\\
        &=2.
    \end{align*} Luego, por el Corolario~\ref{corollary:cat-2-solucion}, la ecuación~(\ref{nver-opposite-point}) tiene solución.
\end{example}

El siguiente ejemplo dice que podemos obtener una versión del Ejemplo~\ref{opossite} para un disco de cualquier radio y centrado en el origen. 

\begin{example}\label{opossite-radio-centro-cualquier}
    Sean $r>0$ y $F:D^n_r(0)\to \mathbb{R}^n$ una aplicación continua tal que $F(x)$ nunca apunta en dirección opuesta a $x$ para cualquier $x\in S^{n-1}_r(0)$, i.e., $F(x)\neq \lambda x$, para todo $\lambda< 0$, para todo $x\in S^{n-1}_r(0)$. Entonces la ecuación \begin{align*}
        F(x)&=0,
    \end{align*} tiene solución. De hecho, podemos considerar el homeomorfismo $\varphi:D^n_r(0)\to D^n,~\varphi(x)=\dfrac{1}{r}(x)$, cuya inversa es la aplicación $\psi:D^n\to D^n_r(0),~\psi(y)=ry$. Note que $\varphi(x)\in S^{n-1}$, para todo $x\in S^{n-1}_r(0)$, y $\psi(y)\in S^{n-1}_r(0)$, para todo $y\in S^{n-1}$. Considere la aplicación $G:D^n\to \mathbb{R}^m,~G(y)=F(\psi(y))=F(ry)$. Como $F(x)\neq \lambda x$, para todo $\lambda< 0$, para todo $x\in S^{n-1}_r(0)$, tenemos que $G(y)\neq \lambda y$, para todo $\lambda< 0$, para todo $y\in S^{n-1}$. Luego, aplicando el Ejemplo~\ref{opossite} a $G$, tenemos que existe $y_0\in D^n$ tal que $G(y)=0$. Así, $x=ry\in D^n_r(0)$ es una solución para la ecuación $F(x)=0$.
\end{example}

\medskip Ejemplo~\ref{opossite-radio-centro-cualquier} implica el siguiente resultado.

\begin{example}
    Sea $F:\mathbb{R}^n\to \mathbb{R}^n$ una aplicación continua tal que $\dfrac{\langle F(x),x\rangle}{\parallel x\parallel}\to +\infty$ uniformemente cuando $\parallel x\parallel\to +\infty$. Entonces la ecuación \begin{align}\label{uniformy}
        F(x)&=0,
    \end{align} tiene solución. De hecho, sea $R>0$ tal que $\langle F(x);x\rangle\geq 0$, para cualquier $\parallel x\parallel=R$. Si $F(x_0)=0$ para algún $\parallel x_0\parallel=R$, no hay nada que hacer. Supongamos que, $F(x)\neq 0$, para cualquier $\parallel x\parallel=R$. Entonces $F(x)$ nunca apunta en dirección opuesta a $x$, para cualquier $\parallel x\parallel=R$. Luego, por el Ejemplo~\ref{opossite-radio-centro-cualquier}, tenemos que la ecuación~(\ref{uniformy}) tiene solución.
\end{example}

\section{Conclusiones}
Una técnica topológica, que existe en la literatura, para la existencia de ecuaciones no lineales es la teoría del grado topológico. En este trabajo, usamos la teoría de categoría de una aplicación para resolver el problema de existencia de soluciones de ecuaciones no lineales. Esta teoría, como mostramos en este trabajo, da una técnica alternativa para estudiar ecuaciones no lineales.

\section{Agradecimientos}
Estas Notas fueron preparadas como material académico para ser usadas durante la visita científica realizada por el autor en el Departamento de Matemáticas de la Pontificia Universidad Católica del Perú (PUCP) en Lima, Perú, entre el período del 17 al 24 de Julio del 2023. El autor desea agradecer la subvención\#2022/16695-7, S\~{a}o Paulo Research Foundation (FAPESP) por el apoyo financiero. Adicionalmente desea agradecer al Departamento de Matemáticas de la Pontificia Universidad Católica del Perú (PUCP) por todo el apoyo brindado durante la visita científica. En particular, el autor considerablemente agradece a los profesores Roland Rabanal, Percy Braulio Fernandez Sanchez y Jorge Chávez.

\bibliographystyle{plain}

\end{document}